\DeclareMathAlphabet{\mathscrbf}{OMS}{mdugm}{b}{d}
\newtheorem{proposition}{Proposition}
\newtheorem{lemma}{Lemma}
\newtheorem{definition}{Definition}
\newtheorem{remark}{Remark}
\newtheorem*{remark*}{Remark}
\newcommand{\eps}{\varepsilon}
\newcommand{\beps}{\bm{\varepsilon}}
\newcommand{\epsz}{\eps_0}
\newcommand{\epse}{\eps_{\operatorname{eq}}}
\newcommand{\epszi}{{\eps_0^{(i)}}}
\newcommand{\epsei}{{\eps_{\operatorname{eq}}^{(i)}}}
\newcommand{\bepsei}{\bar{\eps}_{\operatorname{eq}}^{(i)}}
\newcommand{\bbeps}{\bar{\beps}}
\newcommand{\bepsz}{\bar{\eps}_0}
\newcommand{\bepse}{\bar{\eps}_{\operatorname{eq}}}
\newcommand{\nab}{\bm{\nabla}}
\newcommand{\bet}{\bm{\eta}}
\newcommand{\btau}{\bm{\tau}}
\newcommand{\bxi}{\bm{\xi}}
\newcommand{\bpsi}{\bm{\psi}}
\newcommand{\bK}{\bm{\mathcal{K}}}
\newcommand{\bJ}{\bm{\mathcal{J}}}
\newcommand{\bP}{\bm{\mathcal{P}}}
\newcommand{\Lm}{\bm{\mathcal{L}}}
\newcommand{\Lmz}{\bm{\mathcal{L}}_0}
\newcommand{\Lmh}{\,\widetilde{\!\bm{\mathcal{L}}}}
\newcommand{\sym}{_{\operatorname{s}}}
\newcommand{\sot}{\overset{\mbox{\tiny$2$}}{\otimes}\,}
\newcommand{\sots}{\overset{\mbox{\tiny$2$}}{\otimes}\sym\,}
\newcommand{\dc}{\!:\!}
\newcommand{\qc}{\!::\!}
\newcommand{\tr}{\operatorname{tr}}
\newcommand{\dev}{\bm{\mathrm{dev}}}
\newcommand{\bx}{\bm{x}}
\newcommand{\by}{\bm{y}}
\newcommand{\p}{\partial}
\newcommand{\bG}{\bm{\Gamma}}
\newcommand{\bGz}{\bm{\Gamma}_0}
\newcommand{\hbGz}{\hat{\bm{\Gamma}}_0}
\newcommand{\bGzi}{\bm{\Gamma}_0^\infty}
\newcommand{\td}{\,\text{d}}
\newcommand{\bLz}{\bm{\Lambda}_0}
\newcommand{\bPz}{\bm{\Pi}_0}
\newcommand{\bGziso}{\bm{\Gamma}_0^{\operatorname{iso}}}
\newcommand{\bGzperp}{\bm{\Gamma}_0^{\raisebox{0.5ex}{\tiny$\bot$}}}
\newcommand{\hbGzperp}{\hat{\bm{\Gamma}}_0^{\raisebox{0.5ex}{\tiny$\bot$}}}
\newcommand{\convd}{\,\underline{\underline{\ast}}\,}
\newcommand{\hdLm}{\widehat{\delta\Lm}}
\newcommand{\nI}{n_{\mbox{\tiny $I$ }}}
\newcommand{\nJ}{n_{\mbox{\tiny $J$ }}}
\newcommand{\nK}{n_{\mbox{\tiny $K$}}}
\newcommand{\nP}{n_{\mbox{\tiny $P$ }}}
\newcommand{\para}{_{\mbox{\tiny$\sslash$}}}
\newcommand{\orth}{_{\mbox{\tiny$\bot$}}}
\newcommand{\lJ}{\lambda_{\mbox{\tiny $J$ }}}
\newcommand{\lK}{\lambda_{\mbox{\tiny $K$ }}}
\newcommand{\lP}{\lambda_{\mbox{\tiny $P$ }}}
\title{Converting strain maps into elasticity maps\\ for materials with small contrast}
\author{C\'edric Bellis\\[4mm]
\small  Aix Marseille Univ, CNRS, Centrale Marseille, LMA, Marseille, France\\
\small \href{mailto:bellis@lma.cnrs-mrs.fr}{bellis@lma.cnrs-mrs.fr}
}
\begin{document}
\date{}
\maketitle


\begin{abstract}
This study addresses the question of the quantitative reconstruction of heterogeneous distributions of isotropic elastic moduli from full strain field data. This parameter identification problem exposes the need for a local reconstruction procedure that is investigated here in the case of materials with small contrast. To begin with the integral formulation framework for the periodic linear elasticity problem, first- and second-order asymptotics are retained for the strain field solution and the effective elasticity tensor. Properties of the featured Green's tensor are investigated to characterize its decomposition into an isotropic term and an orthogonal part. The former is then shown to define a local contribution to the volume integral equations considered. Based on this property, then the combination of multiple strain field solutions corresponding to well-chosen applied macroscopic strains is shown to lead to a set of local and uncoupled identities relating, respectively, the bulk and shear moduli to the spherical and deviatoric components of the strain fields. Valid at the first-order in the weak contrast limit, such relations permit point-wise conversions of strain maps into elasticity maps. Furthermore, it is also shown that for macroscopically isotropic material configurations a single strain field solution is actually sufficient to reconstruct either the bulk or the shear modulus distribution. Those results are then revisited in the case of bounded media. Finally, some sets of analytical and numerical examples are provided for comparison and to illustrate the relevance of the obtained strain-modulus local equations for a parameter identification method based on full-field data. 

\end{abstract}

\section{Introduction}

\subsection{Context}

Imaging the mechanical properties of a solid body is a problem with applications to material characterization, non-destructive testing or medical diagnosis. In this context, displacement or strain field measurements are generally assumed to be available within the domain or at its boundary and the reconstruction of distributions of elastic moduli from such data constitutes an ill-posed inverse problem. Over the last few decades a variety of identification methods have been developed for a number of constitutive models associated with a range of measurement modalities, see \cite{BonnetIP}.

The specific focus of the present study is the quantitative reconstruction of the elasticity maps that characterize a heterogeneous and linear isotropic elastic medium, from full strain field maps associated with a set of static mechanical excitations applied to the investigated body. This problem is motivated by the flourishing development of kinematic full-field measurement techniques, see e.g. \cite{Grediac:Hild:FF,book:3D:imaging,Parker}, of which digital image correlation is a representative example. In this context, the starting point for the present study is the availability of full strain field measurements from which we aim at inferring bulk and shear modulus distributions. To tackle this inverse problem a number of dedicated methods have been proposed and the reference \cite{Avril2008} proposes a comparative study. Overall, these approaches revolve around the interpretation of the momentum equation and of the constitutive law as equations for the unknown material parameters and where the spatially varying coefficients are constituted by the strain field data. In this respect, reconstructing the targeted physical parameters amounts to compute a solution to this partial differential equation owing to an appropriate integration strategy. Recently, so-called adjoint-weighted and gradient-based variational methods, see \cite{Barbone:2008,Barbone2010} and \cite{Bal:CPAM,BBIM}, have been developed to deal with intrinsic stability issues associated with this inverse problem and to improve the quality of reconstructions from noisy data.

Departing from these approaches, this article follows the alternate route of explicit reconstruction formulae. Their derivation is based on the integral formulation framework of the linear elasticity problem which allows to express a given strain field as the solution of a volume integral equation commonly referred to as the Lippmann-Schwinger equation. These formulations have been known and used for a long time, see e.g. \cite{Willis1981,Walpole}, in particular due to their interest for computing the behavior of composite materials and their relevance to homogenization theories. It is especially noteworthy that such formulations are at the core of efficient fast-Fourier transform based numerical methods for simulating the behavior of complex microstructured materials \cite{Moulinec94,Moulinec:98,Lebensohn} and they can been adapted to solve the inverse problem as in \cite{BM16}. These iterative approaches are based on the method of successive approximations for computing solutions to the integral equations considered. In this context, this study stems from evaluating strain field solutions at the first order with respect to fluctuations of the material parameters with respect to their mean values, an approximation which is also known in scattering theory as the Born approximation.

Therefore, this work aims at a direct reconstruction approach of bulk and shear modulus distributions from the knowledge of strain field solutions within the domain considered. Focusing on materials with small contrast then the governing integral equations are expressed in terms of the Green's operator for the periodic medium, to begin with, and expanded at the first-order in the material parameter perturbations. Upon characterizing the geometrical properties of the featured Green's tensor it is shown that specific experiments, associated with either purely hydrostatic or deviatoric applied macroscopic strains, can be combined to yield point-wise identities relating the sought moduli to the corresponding strain field solutions. In other words, explicit formulae are obtained at the first-order for a set of well-chosen experiments so that it makes it possible to reconstruct the unknown elastic moduli at a given point from strain field data at this point. This analysis is achieved by taking full advantage of the Fourier-based formulation, that is relevant to the periodic case, and of an orthogonal decomposition of the Green's tensor. Proceeding on this basis, the obtained results are extended to bounded domain configurations. Overall this local integration-free approach is analytic and it is thus based on noise-free strain field data. Note that the proposed method has a connection with works such as \cite{Bobeth,Kreher} on the quantification of local field fluctuations in heterogeneous materials. Lastly, the derivation of first-order inversion formulae is an objective also pursued in \cite{Ikehata} but in the different context of boundary measurements.

The outline of this article is as follows. The identification problem together with the governing equations are presented in the next subsection. The integral formulation framework of the periodic linear elasticity problem is investigated in Section \ref{sec:int:formulation} to shed light on key properties of the Green's operator. In this context, Section \ref{sec:strain:approach} focuses on the derivation of the sought strain-modulus identities for a generic material configuration. Section \ref{sec:energy:approach} addressed the special case of materials with macroscopic isotropy for which an alternate, but equivalent, energy-based approach is employed to obtain the associated reconstruction formulae. With this analysis at hand, the obtained results are then revisited in the case of bounded media in Section \ref{sec:bounded:dom}. Finally, a set of analytical and numerical examples are discussed in sections \ref{sec:analytical:ex} and \ref{sec:numerical:ex} to support and illustrate the proposed reconstruction method. 

\subsection{Statement of the problem}

Consider a periodic elastic medium with representative volume element $\mathcal{V}\subset\mathbb{R}^d$, with $d=2$ or $3$, and characterized by the elasticity tensor field $\Lm \in L^{\infty}_\mathrm{per}\big(\mathcal{V}, \sots \!(\!\sots\! \mathbb{R}^d)\big)$. The medium is considered to be such that the characteristic length-scale of the inhomogeneities is small compared to that of the domain $\mathcal{V}$. The static governing equations read
\begin{equation}\label{eq:elas}
\nab\times[\nab\times\beps(\bx)]^t=\bm{0}, \qquad \bm{\sigma}(\bx)=\Lm(\bx): \beps(\bx), \qquad \nab\cdot\bm{\sigma}(\bx)=\bm{0}, 
\end{equation}
with periodic strain field $\beps:\mathcal{V}\to\sots \mathbb{R}^d$ and associated stress field $\bm{\sigma}:\mathcal{V}\to\sots \mathbb{R}^d$ that satisfies anti-periodic boundary conditions on $\partial \mathcal{V}$. Furthermore, the mechanical loading is assumed to be compatible with a uniform macroscopic strain $\bbeps\in\sots \mathbb{R}^d$ such that
\begin{equation}\label{e:mean}
\langle \beps \rangle = \frac{1}{|\mathcal{V}|}\int_{\mathcal{V}}\beps(\bx)\,\td\bx=\bbeps.
\end{equation}
The strain field solution can be decomposed in terms of a $\mathcal{V}$-periodic displacement field $\bm{u}$ as
\[
\beps(\bx)=\bbeps+\nab\!\sym \bm{u}(\bx),
\]
where $\nab\!\sym$ denotes the symmetrized gradient operator.
Correspondingly, one defines the effective constitutive tensor ${\,\widetilde{\!\Lm}} \in\sots \!\big(\!\sots\! \mathbb{R}^d\big)$ as the fourth-order tensor that satisfies
\begin{equation}\label{def:L:eff}
\langle\bm{\sigma}\rangle={\,\widetilde{\!\Lm}}:\langle\bm{\varepsilon}\rangle.
\end{equation}

Introducing a reference elasticity tensor $\Lmz\in \sots \!\big(\!\sots\! \mathbb{R}^d\big)$ and noting $\delta\Lm=\Lm-\Lmz$, then owing to a perturbation approach the solution $\beps$ to the elasticity problem \eqref{eq:elas}--\eqref{e:mean} is known to satisfy the following Lippmann-Schwinger equation \cite{Willis1981,Milton}:
\begin{equation}\label{LS}
\beps(\bx)=\bbeps- \big[\bGz \, ( \delta\Lm : \beps)\big](\bx)
\end{equation}
where $\bGz$ is the periodic Green operator associated with $\Lmz$, such that $\beps'(\bx)=\big[\bGz \btau\big](\bx)$ is a compatible strain field solution in $\mathcal{V}$ satisfying $\langle \beps' \rangle=\bm{0}$ and $\nab\cdot\big(\Lmz:\beps'(\bx)+\btau(\bx)\big)=\bm{0}$.

By inverting Eqn. \eqref{LS} and using \eqref{def:L:eff}, then one obtains the following two relations which constitute the starting point for this work:
\begin{subequations}
\begin{empheq}[left=\empheqlbrace\,]{align}
& \beps(\bx)=\big(\bm{I}+\bGz\,  \delta\Lm \big)^{-1}:\bbeps, \label{key:eqns:1} \\[1mm]
& {\,\widetilde{\!\Lm}}= \Lmz+\big\langle \delta\Lm:\big(\bm{I}+\bGz\,  \delta\Lm \big)^{-1} \big\rangle. \label{key:eqns:2}
\end{empheq}
\end{subequations}

Focusing on an isotropic elastic material, the elasticity tensor $\Lm$ is expressed in terms of the bulk and shear modulus fields $\kappa$, $\mu \in L^{\infty}_\mathrm{per}(\mathcal{V}, \mathbb{R}^*_+)$ as
\begin{equation}\label{REFisot}
\Lm(\bx)=d \kappa(\bx) \bJ + 2\mu(\bx)\bK,
\end{equation}
according to the definitions of Appendix \ref{app:tensor}. Moreover, we choose the reference tensor $\Lmz$ to be isotropic as $\Lmz=d \kappa_0 \bJ + 2\mu_0 \bK$ where $\kappa_0$ and $\mu_0$ are assumed to be known a priori and to define a weak contrast configuration such that
\begin{equation}\label{def:elas:weak}
\Lm(\bx)=\Lmz+\delta\Lm(\bx) \quad \text{with } \|\delta\Lm\|=o\big(\|\Lmz\|\big).
\end{equation}
Accordingly, let introduce the notations $\delta\kappa(\bx)=\kappa(\bx)-\kappa_0$ and $\delta\mu(\bx)=\mu(\bx)-\mu_0$. Therefore, expanding the fundamental integral equations \eqref{key:eqns:1} and \eqref{key:eqns:2} at first and second orders respectively entails
\begin{subequations}\label{foa}
\begin{empheq}[left=\empheqlbrace\,]{align}
& \beps(\bx)=\bbeps - \big[\bGz \,( \delta\Lm : \bbeps)\big](\bx) + o\big(\|\delta\Lm\|\big), \label{key:eqns:1:exp}\\[1mm]
& {\,\widetilde{\!\Lm}}= \Lmz+\big\langle \delta\Lm \big\rangle - \big\langle \delta\Lm: \bGz\,  \delta\Lm  \big\rangle + o\big(\|\delta\Lm\|^2\big). \label{key:eqns:2:exp}
\end{empheq}
\end{subequations}
In Eqns. \eqref{def:elas:weak} and \eqref{foa}, the asymptotics are relative to the norm $\|\delta\Lm\|$. In the ensuing developments this term is to be interpreted as, see the analysis in \cite{Michel}: 
\[
\|\delta\Lm\|=\sup_{\bx\in\mathcal{V}}\, \max \big(|d\,\delta\kappa(\bx)| , |2\, \delta\mu(\bx)|\big).
\]

In the article we focus on materials with small contrasts for which we aim at deriving local formulae for expressing the elastic moduli in terms of strain field solutions. Such relations is intended to permit a point-wise identification of the elastic moduli $\kappa(\bx)$ and $\mu(\bx)$ from a set of local strain field measurements $\beps^{(i)}(\bx)$, with $i=1,\dots,N$, where $N$ to be determined denotes the number of experiments to perform. The availability of this internal data set is not intended to be discussed here but the reader is refered to the monographs \cite{Grediac:Hild:FF,book:3D:imaging} for a review of state-of-the-art experimental techniques constituting relevant non-invasive imaging modalities in solid mechanics. We rather focus hereafter on (i) the derivation of explicit local equations relating, in the weak contrast case, the elastic moduli to strain field data corresponding to full-field measurements, and (ii)~the characterization of the experiments to be performed.

\section{Integral formulation}\label{sec:int:formulation}

In this section, we investigate the integral equation \eqref{LS} which features the periodic Green's operator $\bGz$. The ensuing developments exploit some well-known properties of this operator, see e.g. \cite{Mura,Torquato1997}, that we establish hereafter for the reader's convenience.

\subsection{Green's operator}

Given the reference elasticity tensor $\Lmz$, the prescribed mean strain $\bbeps$ and a polarization term defined as $\btau(\bx)=\delta\Lm(\bx):\beps(\bx)$, let consider the following auxiliary problem in $\mathcal{V}$:
\begin{equation}\label{eq:ref:eps:tau}\left\{\begin{aligned}
& \bm{\sigma}(\bx)=\Lmz\dc\nab\!\sym \bm{u}(\bx)+\btau(\bx), && \nab\cdot\bm{\sigma}(\bx)=\bm{0},\\
 & \bm{u} \text{ is periodic}, && \bm{\sigma}\cdot\bm{n} \text{ is anti-periodic},\\
\end{aligned}\right.\end{equation}
where $\bm{n}$ denotes the unit outward normal vector on $\partial \mathcal{V}$. Using Appendix \ref{app:Fourier}, the equilibrium equation in \eqref{eq:ref:eps:tau} is recast in Fourier-space, as
\begin{equation}\label{equ:ref:u}
 \Big(\kappa_0+\frac{d-2}{d}\mu_0\Big)(\hat{\bm{u}}\cdot \bxi)\bxi + \mu_0  |\bxi|^2 \hat{\bm{u}}  = \frac{-1}{2\pi\mathrm{i}  }\,\hat\btau\cdot \bxi.
\end{equation}
Upon applying the inner product with $\bxi\neq\bm{0}$, the above equation entails
\begin{equation}\label{div:u}
\hat{\bm{u}}\cdot \bxi  = \frac{-1}{2\pi\mathrm{i}  |\bxi|^2} \, \frac{ d \,\bxi\cdot\hat\btau\cdot \bxi }{   d\kappa_0+2(d-1)\mu_0 }.
\end{equation}
Substitution of \eqref{div:u} in \eqref{equ:ref:u} yields for all $\bxi\neq\bm{0}$
\begin{equation}\label{eq:final:u}
2\pi\mathrm{i}  \,\hat{\bm{u}}(\bxi) = \frac{-1}{\mu_0}\,\frac{\hat\btau\cdot\bxi}{|\bxi|^2}+\frac{1}{\mu_0}\Big[ \frac{d\kappa_0+(d-2)\mu_0}{d\kappa_0+2(d-1)\mu_0} \Big]\frac{\bxi\cdot\hat\btau\cdot\bxi}{|\bxi|^4}\,\bxi .
\end{equation}
Finally, the strain field $\nab\!\sym \bm{u}$ solution to \eqref{eq:ref:eps:tau} is uniquely determined by
\begin{equation}\label{equ:eps:green}
\mathscr{F}\big[\nab\!\sym \bm{u}\big](\bm{0})=\bm{0} \quad \text{and} \quad \mathscr{F}\big[\nab\!\sym \bm{u}\big](\bxi)=-\hbGz(\bxi):\hat\btau(\bxi) \quad (\forall \bxi\neq\bm{0}),
\end{equation}
where the Green's tensor $\hbGz(\bxi)\in \sots \!\big(\!\sots\! \mathbb{R}^d\big)$ is a fourth-order tensor which is identified from Eqn. \eqref{eq:final:u} as
\begin{equation}\label{def:gamma:op}
\hbGz(\bm{0})=\bm{0} \quad \text{and} \quad \hbGz(\bxi)=\frac{\alpha_0}{|\bxi|^2} \bpsi_i(\bxi)\otimes\bpsi_i(\bxi) + \frac{\beta_0}{|\bxi|^4}\bpsi(\bxi)\otimes\bpsi(\bxi) \quad (\forall \bxi\neq\bm{0}),
\end{equation}
using the summation convention over repeated indices and where
\begin{equation}\label{def:psi}\left\{\begin{aligned}
& \bpsi(\bxi)=\bxi\otimes\bxi \quad \text{and} \quad \bpsi_i(\bxi)=\bxi\otimes\bm{e}_i+\bm{e}_i\otimes\bxi  \quad \text{for }i=1,\dots,d\\
& \alpha_0=\frac{1}{4\mu_0} \\
& \beta_0= - \frac{1}{\mu_0}\Big[ \frac{d\kappa_0+(d-2)\mu_0}{d\kappa_0+2(d-1)\mu_0}\Big].
\end{aligned}\right.\end{equation}
given an orthonormal basis $\{\bm{e}_i\}$ of $\mathbb{R}^d$. Therefore, according to the convolution theorem \eqref{conv:thm}, the operator $\bGz$ is defined as
\begin{equation}\label{def:op:Fourier}
\big[\bGz \btau\big](\bx)=\sum_{\substack{\bxi\in\mathcal{L}' \\ \bxi\neq\bm{0}} } \hbGz(\bxi):\hat\btau(\bxi) e^{2\pi \mathrm{i}\, \bx\cdot\bxi}.
\end{equation}
Upon substituting the polarization term $\btau(\bx)=\delta\Lm(\bx):\beps(\bx)$ in \eqref{equ:eps:green} one directly obtains the Lippmann-Schwinger equation \eqref{LS}.

By construction, the Green's tensor $\hbGz(\bxi)$ is a fourth-order tensor with minor and major index symmetries and which has transversely isotropic symmetry with axis $\bxi$. Based on this observation, the computation of its projections against the fourth-order isotropic tensors $\bJ$ and $\bK$ is the focus of the next section. 

\subsection{Orthogonal decomposition of the Green's tensor}

Key properties of the Green's operator $\bGz$ are now investigated.
\begin{lemma}\label{def:decomp:l:nl}
The isotropic component $\bGziso$ of the Green's tensor $\hbGz(\bxi)$ is independent of $\bxi$ for all $\bxi\in\mathbb{R}^d$. Therefore, one has the orthogonal decomposition 
 \[
 \hbGz(\bxi) =  \bGziso + \hbGzperp(\bxi), 
 \]
with these components satisfying the following properties:
\[
\displaystyle \bGziso=\frac{\lJ}{\nJ} \bJ + \frac{\lK}{\nK} \bK \quad\text{with}\quad  \left\{\begin{aligned} & \lJ= \hbGz(\bxi) \qc\bJ=\frac1d(4\alpha_0+\beta_0) \\ & \lK= \hbGz(\bxi) \qc\bK=\frac1d\Big[\big(2d(d+1)-4\big)\alpha_0+(d-1)\beta_0\Big] \end{aligned}\right.\]
 and $\hbGzperp(\bxi)\qc\bJ=\hbGzperp(\bxi)\qc\bK=0$.
\end{lemma}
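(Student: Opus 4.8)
The plan is to recognize $\bGziso$ as the orthogonal projection of $\hbGz(\bxi)$ onto the two-dimensional space of isotropic fourth-order tensors spanned by $\bJ$ and $\bK$, with respect to the inner product induced by the full contraction $\qc$. Since $\bJ$ and $\bK$ are complementary orthogonal projectors, they satisfy $\bJ\qc\bK=0$, $\bJ\qc\bJ=\nJ$ and $\bK\qc\bK=\nK$, so this projection necessarily takes the form $\bGziso=(\lJ/\nJ)\bJ+(\lK/\nK)\bK$ with $\lJ=\hbGz(\bxi)\qc\bJ$ and $\lK=\hbGz(\bxi)\qc\bK$. It therefore suffices to (i) evaluate these two scalar contractions explicitly and (ii) verify that they do not depend on $\bxi$; the stated $\bxi$-independence of $\bGziso$ and the orthogonality of the remainder then follow at once.

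For the first contraction I would write $\bJ=\tfrac1d\,\bm{i}\otimes\bm{i}$, where $\bm{i}$ denotes the second-order identity, so that $\hbGz(\bxi)\qc\bJ=\tfrac1d\,\bm{i}\dc\hbGz(\bxi)\dc\bm{i}$. Feeding in the defining form \eqref{def:gamma:op} reduces everything to the two elementary contractions $\bm{i}\dc\bpsi(\bxi)=|\bxi|^2$ and $\bm{i}\dc\bpsi_i(\bxi)=2\xi_i$. Summing over the repeated index gives $(\bm{i}\dc\bpsi_i)(\bm{i}\dc\bpsi_i)=4|\bxi|^2$, whereupon the factors of $|\bxi|$ cancel against the $1/|\bxi|^2$ and $1/|\bxi|^4$ prefactors and one is left with $\lJ=\tfrac1d(4\alpha_0+\beta_0)$, manifestly independent of $\bxi$.

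Rather than contract directly against $\bK$, I would exploit $\bK=\bm{I}-\bJ$ and compute $\lK=\hbGz(\bxi)\qc\bm{I}-\lJ$, where $\bm{I}$ is the fourth-order symmetric identity. For a dyad $\bm{a}\otimes\bm{b}$ of symmetric tensors one has $(\bm{a}\otimes\bm{b})\qc\bm{I}=\bm{a}\dc\bm{b}$, so the only new ingredients are $\bpsi(\bxi)\dc\bpsi(\bxi)=|\bxi|^4$ and the summed quantity $\bpsi_i(\bxi)\dc\bpsi_i(\bxi)=2(d+1)|\bxi|^2$; the latter is the one index computation demanding a little care, since under the sum over $i$ it mixes the two terms of $\bpsi_i=\bxi\otimes\bm{e}_i+\bm{e}_i\otimes\bxi$. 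Again the powers of $|\bxi|$ cancel, yielding $\hbGz(\bxi)\qc\bm{I}=2(d+1)\alpha_0+\beta_0$ and hence, after subtracting $\lJ$, the announced $\lK=\tfrac1d\big[(2d(d+1)-4)\alpha_0+(d-1)\beta_0\big]$.

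Finally, the orthogonal splitting is immediate once $\bGziso$ is known to be a projection: setting $\hbGzperp(\bxi)=\hbGz(\bxi)-\bGziso$ and contracting with $\bJ$ gives $\hbGzperp(\bxi)\qc\bJ=\lJ-(\lJ/\nJ)(\bJ\qc\bJ)-(\lK/\nK)(\bK\qc\bJ)=\lJ-\lJ=0$ by the orthogonality relations above, and the same computation with $\bK$ gives $\hbGzperp(\bxi)\qc\bK=0$. I do not anticipate any genuine obstacle: the whole content lies in the elementary algebraic fact that these two isotropic projections collapse to pure constants once the powers of $|\bxi|$ coming from the prefactors and from the $\bpsi$-contractions cancel, and the only step requiring attention is the bookkeeping in $\bpsi_i\dc\bpsi_i$ summed over $i$.
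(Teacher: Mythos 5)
Your proposal is correct and follows essentially the same route as the paper: both compute the two scalar projections $\hbGz(\bxi)\qc\bJ$ and $\hbGz(\bxi)\qc\bK$ from the elementary contractions of $\bpsi$ and $\bpsi_i$ (with the same care over the summed term $\bpsi_i\dc\bpsi_i$) and observe that the powers of $|\bxi|$ cancel. Your detour through $\lK=\hbGz(\bxi)\qc\bm{\mathcal{I}}-\lJ$ is only a cosmetic reorganization of the paper's use of the component formula $b=\nK^{-1}\big(\mathcal{A}_{ijij}-\tfrac1d\mathcal{A}_{iijj}\big)$, which encodes exactly $\bK=\bm{\mathcal{I}}-\bJ$.
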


\begin{proof} 
For all $\bxi\in\mathbb{R}^d$, the isotropic component $\bGziso$ of $\hbGz(\bxi)$ is found by computing the projections of the latter against the tensors $\bJ$ and $\bK$. First, using \eqref{comp:ela}, one has
\[
\hbGz(\bxi)\qc\bJ=\frac1d \Big[\frac{\alpha_0}{|\bxi|^2}\tr[\bpsi_i]\tr[\bpsi_i]+\frac{\beta_0}{|\bxi|^4}\tr[\bpsi]^2\Big].
\]
From the definition \eqref{def:psi}, the traces featured in the above expression are given by
\[
\tr[\bpsi_i]=2\xi_i \quad \text{for }i=1,\dots,d \quad \text{and} \quad \tr[\bpsi]=|\bxi|^2,
\]
so that, owing to the index summation convention, one obtains
\[
\lJ=\hbGz(\bxi)\qc\bJ=\frac1d(4\alpha_0+\beta_0).
\]
This relation is remarkably independent of the Fourier variable $\bxi$. Next, one has to compute the quantity $\hbGz(\bxi)\qc\bK$ which according to \eqref{comp:ela} reads
\[
\hbGz(\bxi)\qc\bK=\frac{\alpha_0}{|\bxi|^2}\Big[\bpsi_i\dc\bpsi_i-\frac1d \tr[\bpsi_i]\tr[\bpsi_i]\Big] + \frac{\beta_0}{|\bxi|^4}\Big[\bpsi\dc\bpsi-\frac1d \tr[\bpsi]^2\Big].
\]
On noting that
\[
\bpsi_i\dc\bpsi_i=2(|\bxi|^2+\xi_i^2) \quad \text{for }i=1,\dots,d \quad \text{and} \quad \bpsi\dc\bpsi=|\bxi|^4,
\]
one finally obtains after summation over index $i$
\[
\lK=\hbGz(\bxi)\qc\bK=\frac1d\Big[\big(2d(d+1)-4\big)\alpha_0+(d-1)\beta_0\Big].
\]
As for $\hbGz(\bxi)\qc\bJ$, the above quantity is constant in Fourier-space so that one can finally deduce that
 \[
\frac{1}{\nJ}\big(\hbGz(\bxi)\qc\bJ \big) \bJ + \frac{1}{\nK}\big(\hbGz(\bxi)\qc\bK \big) \bK =\bGziso \in \sots \!\big(\!\sots\! \mathbb{R}^d\big) \\
 \]
According to its definition, the tensor $\hbGzperp(\bxi)$ denotes the component of $\hbGz(\bxi)$ that is orthogonal to isotropic tensors, hence it satisfies $\hbGzperp(\bxi)\qc\bJ=\hbGzperp(\bxi)\qc\bK=0$.
  \end{proof}
 
According to the orthogonal decomposition of Lemma \ref{def:decomp:l:nl}, the Green's operator $\bGz$ can be decomposed into a local term that involves the tensor $\bGziso$ and a non-local operator $\bGzperp$ constructed from $\hbGzperp$ as in \eqref{def:op:Fourier}, i.e.
 \begin{equation}\label{def:ortho:decomp}
 \big[\bGz\btau\big](\bx)=\bGziso\dc\btau(\bx) + \big[\bGzperp \btau\big](\bx).
 \end{equation}
 This decomposition is the cornerstone for the approach investigated in this article.
 
 \begin{remark}\label{rmk:l:nl}
The identity \eqref{def:ortho:decomp} can be put into the broader perspective of determining what are the local and non-local contributions to the integral equation formulation of a given elasticity problem. On noting $\bG$ the Green's operator associated with a homogeneous elasticity tensor $\Lmz$ for the domain considered, then one would seek a decomposition such as
 \begin{equation}\label{def:decomp:l:nl:gen}
 \big[\bG\btau\big](\bx)=\bLz\dc\btau(\bx) + \big[\bPz\, \btau\big](\bx).
\end{equation}
where $\bLz$ and $\bPz$ denotes some local and non-local operators. That is the integration of the singular kernel of the Green's operator that leads to an identity such as \eqref{def:decomp:l:nl:gen}. For example, considering the case of an infinite medium, then the conventional integration approach, see \cite{Mikhlin,Bonnet:book}, yields
\[ \bLz=\bm{\mathcal{S}}_{\!B}:\Lmz^{-1} \qquad \text{and} \qquad
\big[\bPz\, \btau\big](\bx) = P.V.\int_{\mathbb{R}^d} \bGzi(\bx-\by):\btau(\by)\td\by,
\]
with $\bm{\mathcal{S}}_{\!B}$ being the Eshelby tensor for the ball that arises because of the exclusion of a spherical domain associated with the definition of Cauchy's principal value integral denotes as $P.V.\int_{\mathbb{R}^d}$ and where $\bGzi$ is the Green's function for the reference infinite medium considered.

The decompositions \eqref{def:ortho:decomp} and \eqref{def:decomp:l:nl:gen} actually coincide in the isotropic case, while $\bLz$ is no longer isotropic when $\Lmz$ is anisotropic. This implies that alternative orthogonal decompositions should be investigated in this case to obtain local identities that are analogous to the ones derived hereafter.
\end{remark}

\section{Strain-based approach}\label{sec:strain:approach}

In this section, we derive a set of local equations based on the strain field asymptotics \eqref{key:eqns:1:exp} and on Lemma \ref{def:decomp:l:nl}.

\subsection{Preliminary results}

Based on the definition \eqref{orth:decomp} of the parallel and orthogonal components of the strain field relatively to the macroscopic strain $\bbeps$, taking the inner product of the first-order asymptotics \eqref{key:eqns:1:exp} with $\bbeps$ immediately leads to the following result:

\begin{lemma}\label{lemma:orth:para}
The strain field $\beps(\bx)$ associated with the macroscopic strain $\bbeps$ satisfies for all $\bx\in\mathbb{R}^d$:
\[\begin{aligned}
\eps\para(\bx)=\|\bbeps\| - \frac{1}{\|\bbeps\|}\big[\bbeps:\bGz \,( \delta\Lm : \bbeps)\big](\bx) + o\big(\|\delta\Lm\|\big).
\end{aligned}\]
\end{lemma}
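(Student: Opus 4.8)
The plan is to read off $\eps\para$ as an orthogonal projection and then simply contract the first-order strain asymptotics \eqref{key:eqns:1:exp} with the fixed macroscopic strain $\bbeps$. Relative to $\bbeps$, the decomposition \eqref{orth:decomp} writes $\beps(\bx)=\eps\para(\bx)\,\bbeps/\|\bbeps\|+\beps\orth(\bx)$ with $\beps\orth(\bx):\bbeps=0$, so that the scalar parallel component is the projection $\eps\para(\bx)=\bbeps:\beps(\bx)/\|\bbeps\|$. It therefore suffices to compute $\bbeps:\beps(\bx)$ and divide by $\|\bbeps\|$.

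First I would take the double contraction of \eqref{key:eqns:1:exp} with $\bbeps$. As $\bbeps$ is a second-order tensor that does not depend on $\bx$, it can be moved inside the bracketed field, which gives
\[
\bbeps:\beps(\bx)=\bbeps:\bbeps-\big[\bbeps:\bGz\,(\delta\Lm:\bbeps)\big](\bx)+\bbeps:o\big(\|\delta\Lm\|\big).
\]
Using $\bbeps:\bbeps=\|\bbeps\|^2$ and dividing throughout by $\|\bbeps\|$ then reproduces the asserted identity, with leading term $\|\bbeps\|$ and first-order correction $-\tfrac{1}{\|\bbeps\|}\big[\bbeps:\bGz\,(\delta\Lm:\bbeps)\big](\bx)$.

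The only step warranting a comment is the remainder: I must check that $\tfrac{1}{\|\bbeps\|}\,\bbeps:o(\|\delta\Lm\|)$ is still $o(\|\delta\Lm\|)$. Since $\bbeps$ is fixed and independent of $\delta\Lm$, Cauchy--Schwarz gives $|\bbeps:o(\|\delta\Lm\|)|\le\|\bbeps\|\,\|o(\|\delta\Lm\|)\|$, and dividing by the constant $\|\bbeps\|$ preserves the estimate. I do not expect any genuine obstacle here: the lemma is a one-line projection of the expansion \eqref{key:eqns:1:exp} onto the direction of $\bbeps$, which is precisely why the preceding text announces that it follows immediately.
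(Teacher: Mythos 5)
Your proof is correct and matches the paper's own argument: the paper likewise obtains the lemma by taking the inner product of the first-order asymptotics \eqref{key:eqns:1:exp} with $\bbeps$ and using the definition $\eps\para(\bx)=\beps(\bx)\dc\bbeps/\|\bbeps\|$ from \eqref{orth:decomp}. Your additional check that the remainder stays $o\big(\|\delta\Lm\|\big)$ after contraction with the fixed tensor $\bbeps$ is a harmless elaboration of what the paper leaves implicit.
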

Therefore, the strain field $\beps(\bx)$ is locally collinear to $\bbeps$ at the first order and according to \eqref{pyth:eps} one has also $\eps\orth(\bx)= O\big(\|\delta\Lm\|\big)$. However, in the ensuing analysis, only the parallel component $\eps\para$ of the strain field plays a central role in the procedure of deriving the sought local identities. This is indeed because of the quadratic involvement of the mean field $\bbeps$ in the quantity $\bbeps:\bGz \,( \delta\Lm : \bbeps)$.\\

We now consider that the prescribed mean strain $\bbeps$ is either purely spherical or purely deviatoric, i.e. $\bbeps$ satisfies $\bP\dc\bbeps=\bbeps$ with either $\bP=\bJ$ or $\bP=\bK$. According to the equations \eqref{REFisot} and \eqref{comp:ela}, the definition of the Green's operator and the identity \eqref{conv:thm}, one has
\begin{equation}\label{def:proj:lin}
\big[\bbeps:\bGz \,( \delta\Lm : \bbeps)\big](\bx) = \mathscr{F}^{-1}\Big[\widehat{\delta p}(\bxi)\,  \hbGz(\bxi)\qc\big(\bbeps\otimes\bbeps\big) \,\Big](\bx),
\end{equation}
where $\widehat{\delta p}(\bxi)=\nP^{-1}\big(\hdLm(\bxi)\qc\bP\big)$ and $\nP=\nJ$ or $\nK$ respectively, the term $\widehat{\delta p}$ being thus the Fourier transform of the corresponding isotropic elasticity parameter $d\,\delta\kappa$ or $2\,\delta\mu$.

In Eqn. \eqref{def:proj:lin}, the right-hand side scalar quantity $\hbGz(\bxi)\qc(\bbeps\otimes\bbeps)$ is likely to depend on the variable $\bxi$, which would in turn make this term non-local in the physical space. In other words, for an arbitrarily chosen macroscopic strain $\bbeps$, the corresponding strain field $\beps(\bx)$ depends on the constitutive moduli in a non-local fashion. However, given the properties of the Green's tensor in Lemma~\ref{def:decomp:l:nl}, then multiple experiments can be combined in order to retain only the constant isotropic component $\bGziso$ of the Green's tensor $\hbGz(\bxi)$ so as to reduce \eqref{def:proj:lin} to a local equation relating strain fields and elastic moduli. A simple way to achieve this is to consider macroscopic strains that constitute an orthogonal eigentensor basis of the fourth-order projection tensor $\bP$, i.e. $\bbeps^{(i)}\in\sots\mathbb{R}^d$ with $i=1,\dots,\nP$ such that
\begin{equation}\label{basis:P}
\bP=\sum_{i=1}^{\nP} \frac{\bbeps^{(i)}\otimes\bbeps^{(i)}}{\|\bbeps^{(i)}\|^2} \quad \text{and} \quad \bbeps^{(i)}\dc\bbeps^{(j)}=\|\bbeps^{(i)}\|\, \|\bbeps^{(j)}\|\,  \delta_{ij}.
\end{equation}
Therefore, considering the strain field solutions $\beps^{(i)}$ that satisfy $\langle\beps^{(i)}\rangle=\bbeps^{(i)}$, then using Eqns. \eqref{def:proj:lin}, \eqref{basis:P} and Lemma \ref{lemma:orth:para} entails upon summation
\begin{equation}\label{eq:proj:comb}
\sum_{i=1}^{\nP} \frac{\eps\para^{(i)}(\bx)}{\|  \bbeps^{(i)} \|} = \nP -  \mathscr{F}^{-1}\Big[\widehat{\delta p}(\bxi)\,  \hbGz(\bxi)\qc\bP \,\Big](\bx) + o\big(\|\delta\Lm\|\big).
\end{equation}
Based on Lemma \ref{def:decomp:l:nl}, i.e. $\hbGz(\bxi)\qc\bP=\bGziso\qc\bP=\lP$ with $\lP=\lJ$ or $\lP=\lK$ then Eqn. \eqref{eq:proj:comb} yield the following result. 
\begin{lemma}\label{lemma:prop:proj}
Consider a set of macroscopic strains $\bbeps^{(i)}$ with $i=1,\dots,\nP$ such that Eqn. \eqref{basis:P} holds for $\bP=\bJ$ or $\bP=\bK$. Considering the elastic modulus defined as $\delta p(\bx)=\nP^{-1}\,\delta\Lm(\bx)\qc\bP$ then the associated strain field solutions $\beps^{(i)}(\bx)$ satisfy the local equation
\[
 \delta p(\bx)   = \lP^{-1}  \sum_{i=1}^{\nP} \bigg[ 1 - \frac{\eps\para^{(i)}(\bx)}{\|  \bbeps^{(i)} \|} \bigg] + o\big(\|\delta\Lm\|\big), 
\]
or equivalently
\[
 \delta p(\bx)   =  \lP^{-1} \sum_{i=1}^{\nP} \bigg[ 1 -  \bigg( \frac{\bP \qc \beps(\bx)\otimes\beps(\bx) }{\bP \qc \bbeps\otimes\bbeps }\Bigg)^{\!\!1/2} \  \bigg] + o\big(\|\delta\Lm\|\big).
\]
\end{lemma}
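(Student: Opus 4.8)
The plan is to start from the combined relation \eqref{eq:proj:comb}, which already encodes the summation over the eigentensor basis \eqref{basis:P}, and to invoke the constancy in Fourier space of the isotropic projection of the Green's tensor established in Lemma~\ref{def:decomp:l:nl}. The decisive observation is that, by that lemma, $\hbGz(\bxi)\qc\bP=\lP$ is independent of $\bxi$ for $\bP=\bJ$ or $\bP=\bK$. Consequently the scalar $\lP$ factors out of the inverse Fourier transform in \eqref{eq:proj:comb}, and since $\widehat{\delta p}$ is by construction the Fourier transform of $\delta p(\bx)=\nP^{-1}\,\delta\Lm(\bx)\qc\bP$, one gets $\mathscr{F}^{-1}\big[\widehat{\delta p}(\bxi)\,\lP\big](\bx)=\lP\,\delta p(\bx)$. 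This is the step that collapses the a priori non-local right-hand side of \eqref{eq:proj:comb} into a point-wise quantity.

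Substituting this identity into \eqref{eq:proj:comb} leaves $\sum_{i=1}^{\nP}\eps\para^{(i)}(\bx)/\|\bbeps^{(i)}\|=\nP-\lP\,\delta p(\bx)+o(\|\delta\Lm\|)$. Writing $\nP=\sum_{i=1}^{\nP}1$, dividing by $\lP\neq 0$, and regrouping term by term immediately yields the first announced identity, the division preserving the $o(\|\delta\Lm\|)$ remainder.

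To obtain the equivalent second form I would show, for each experiment, that $\eps\para^{(i)}(\bx)/\|\bbeps^{(i)}\|=\big(\bP\qc\beps^{(i)}\otimes\beps^{(i)}\,/\,\bP\qc\bbeps^{(i)}\otimes\bbeps^{(i)}\big)^{1/2}+o(\|\delta\Lm\|)$. Using $\bP\qc(\beps\otimes\beps)=\beps\dc\bP\dc\beps$ and that $\bbeps$ is purely spherical or deviatoric, hence $\bP\dc\bbeps=\bbeps$, the denominator is exactly $\|\bbeps\|^2$. For the numerator I would substitute the decomposition $\beps=\eps\para\,\bbeps/\|\bbeps\|+\beps\orth$; the major symmetry of $\bP$ together with $\bP\dc\bbeps=\bbeps$ and the orthogonality $\bbeps\dc\beps\orth=0$ makes the cross term vanish identically, leaving $\beps\dc\bP\dc\beps=\eps\para^2+\beps\orth\dc\bP\dc\beps\orth$.

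The step requiring the most care is the order bookkeeping that closes this equivalence. By Lemma~\ref{lemma:orth:para} and the collinearity property noted right after it, $\eps\orth(\bx)=O(\|\delta\Lm\|)$ whereas $\eps\para(\bx)=\|\bbeps\|+O(\|\delta\Lm\|)$ is of order one, so the residual $\beps\orth\dc\bP\dc\beps\orth$ is $O(\|\delta\Lm\|^2)$ and $\beps\dc\bP\dc\beps=\eps\para^2+O(\|\delta\Lm\|^2)$. Taking the square root and dividing by $\|\bbeps\|$ then returns $\eps\para/\|\bbeps\|$ up to an $O(\|\delta\Lm\|^2)=o(\|\delta\Lm\|)$ term that is absorbed into the remainder, which establishes the equivalence and completes the proof.
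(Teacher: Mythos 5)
Your proposal is correct and follows essentially the same route as the paper: it combines the summed relation \eqref{eq:proj:comb} with the constancy of $\hbGz(\bxi)\qc\bP=\lP$ from Lemma~\ref{def:decomp:l:nl} to localize the Fourier term as $\lP\,\delta p(\bx)$, and then establishes the second form via the orthogonal decomposition \eqref{orth:decomp} and the estimate $\eps\orth=O(\|\delta\Lm\|)$, exactly the chain the paper invokes through Eqns.~\eqref{pyth:eps}--\eqref{pyth:JK} and \eqref{eq:epspara:P}. Your explicit order bookkeeping for the cross and quadratic terms simply fills in what the paper labels an immediate consequence of Lemma~\ref{lemma:orth:para}.
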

In the above lemma the second identity is an immediate consequence of Lemma \ref{lemma:orth:para} and Eqns. \eqref{pyth:eps}--\eqref{pyth:JK} according to which the strain field solution $\beps(\bx)$ satisfies at the first-order:
\begin{equation}\label{eq:epspara:P}
\eps\para(\bx)=\big(\bP \qc \beps(\bx)\otimes\beps(\bx)\big)^{1/2}.
\end{equation}
In the next two subsections, the identities of Lemma \ref{lemma:prop:proj} are made explicit for the cases of purely spherical and purely deviatoric macroscopic strains.

\subsection{Spherical macroscopic strain}

Since the tensor $\bJ$ is associated with a subspace of dimension $\nJ=1$, then only one strain field measurement $\beps(\bx)$ corresponding to a purely spherical macroscopic strain $\bbeps$ is sufficient to identify the bulk modulus $\kappa(\bx)$. Given that
\[
\frac{\eps\para(\bx)}{\|  \bbeps \|}=\frac{\big(\bJ\dc\beps(\bx)\big)\dc\big(\bJ\dc\bbeps\big)}{\|\bbeps\|^2}=\frac{\tr[\beps(\bx)]\tr[\bbeps]}{d\|\bbeps\|^2},
\]
and computing $\lJ^{-1}$ based on the definition \eqref{def:psi} and Lemma \ref{def:decomp:l:nl}, then Lemma \ref{lemma:prop:proj} yields the following result.

\begin{proposition}\label{prop:sph:rela}
Let $\bbeps$ denote a purely spherical macroscopic strain with associated strain field solution $\beps(\bx)$. The latter satisfies, at the first order, the following local equation:
\begin{equation}\label{expl:rela:J:1}
\kappa(\bx) =  \kappa_0+\frac{\lJ^{-1}}{d}\bigg[1- \frac{\tr[\beps(\bx)]\tr[\bbeps]}{d\|\bbeps\|^2}\bigg] + o\big(\|\delta\Lm\|\big),
\end{equation}
where
\[
\lJ^{-1}=d\kappa_0+2(d-1)\mu_0.
\]
The identity \eqref{expl:rela:J:1} can be recast in terms of the hydrostatic strain field as
\begin{equation}\label{expl:rela:J}
\kappa(\bx) =  \kappa_0+\frac{\lJ^{-1}}{d}\bigg[1- \frac{\epsz(\bx)}{\bepsz}\bigg] + o\big(\|\delta\Lm\|\big),
\end{equation}
while the equivalent strain is such that $\epse(\bx) = O\big(\|\delta\Lm\|\big)$.
\end{proposition}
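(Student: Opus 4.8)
The plan is to obtain Proposition~\ref{prop:sph:rela} as the specialization of Lemma~\ref{lemma:prop:proj} to the spherical projector $\bP=\bJ$, for which $\nJ=1$, followed by purely algebraic simplifications. First I would set $\bP=\bJ$ in Lemma~\ref{lemma:prop:proj}: since $\nJ=1$ the sum over $i$ collapses to a single term, so that a single experiment with spherical $\bbeps$ suffices. To identify the left-hand side modulus $\delta p=\nJ^{-1}(\delta\Lm\qc\bJ)$, I would insert the isotropic split $\delta\Lm=d\,\delta\kappa\,\bJ+2\,\delta\mu\,\bK$ induced by \eqref{REFisot} and the choice of $\Lmz$, and use the orthogonality of the projectors in the quadruple-contraction sense, $\bJ\qc\bJ=\nJ=1$ and $\bK\qc\bJ=0$, to get $\delta\Lm\qc\bJ=d\,\delta\kappa$ and hence $\delta p=d\,\delta\kappa$.

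The next step is the kinematic factor $\eps\para/\|\bbeps\|$. Because $\bbeps$ is purely spherical one has $\bJ\dc\bbeps=\bbeps$, while $\bJ\dc\beps(\bx)=\tfrac1d\tr[\beps(\bx)]\bm{I}$; taking their double contraction and dividing by $\|\bbeps\|^2$ yields $\eps\para(\bx)/\|\bbeps\|=\tr[\beps(\bx)]\tr[\bbeps]/(d\|\bbeps\|^2)$, which is also the content of \eqref{eq:epspara:P} for $\bP=\bJ$. Substituting these identities into Lemma~\ref{lemma:prop:proj} and writing $\kappa(\bx)=\kappa_0+\delta\kappa(\bx)$ produces exactly \eqref{expl:rela:J:1}.

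It remains to pin down the constant $\lJ^{-1}$ and to recast the result. From Lemma~\ref{def:decomp:l:nl} one has $\lJ=\tfrac1d(4\alpha_0+\beta_0)$; inserting $\alpha_0$ and $\beta_0$ from \eqref{def:psi} and simplifying, the bracket collapses to $4\alpha_0+\beta_0=d/[d\kappa_0+2(d-1)\mu_0]$, so that $\lJ^{-1}=d\kappa_0+2(d-1)\mu_0$ as claimed. The recast \eqref{expl:rela:J} then follows by recognizing the trace ratio as $\epsz(\bx)/\bepsz$ through the definitions of the hydrostatic strain invariants, noting that for a spherical macroscopic strain $d\|\bbeps\|^2=\tr[\bbeps]^2$. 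Finally, for a spherical $\bbeps$ the subspace orthogonal to $\bbeps$ coincides with the deviatoric subspace, so the equivalent strain $\epse$ is controlled by the orthogonal component $\eps\orth$; invoking the remark following Lemma~\ref{lemma:orth:para} that $\eps\orth(\bx)=O(\|\delta\Lm\|)$ gives $\epse(\bx)=O(\|\delta\Lm\|)$.

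Since Lemma~\ref{lemma:prop:proj} already carries the analytic weight, no genuine difficulty remains: the proof is a sequence of tensor-contraction bookkeeping steps. The only point demanding care is the algebraic collapse of $4\alpha_0+\beta_0$ to the single fraction above, where the numerator $2(d-1)\mu_0-(d-2)\mu_0=d\mu_0$ must be tracked correctly, together with keeping the double-contraction $\dc$ and quadruple-contraction $\qc$ conventions distinct throughout.
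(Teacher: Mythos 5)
Your proposal is correct and follows essentially the same route as the paper: specializing Lemma~\ref{lemma:prop:proj} to $\bP=\bJ$ with $\nJ=1$, identifying $\eps\para(\bx)/\|\bbeps\|$ with the trace ratio, and evaluating $\lJ^{-1}=d\kappa_0+2(d-1)\mu_0$ from $4\alpha_0+\beta_0$. The algebraic details (the numerator $d\mu_0$, the identity $d\|\bbeps\|^2=\tr[\bbeps]^2$ for the recast, and the deviatoric/orthogonal identification giving $\epse=O(\|\delta\Lm\|)$) all check out.
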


\subsection{Deviatoric macroscopic strains}

When considering the case $\bP=\bK$ in Lemma \ref{lemma:prop:proj} one has to deal with a larger subspace of dimension $\nK=2$ or $5$, for $d=2$ or $3$ respectively. Therefore, the derivation of a local identity for the shear modulus $\mu(\bx)$ requires the use of a number $\nK$ of experiments with applied deviatoric macroscopic strains.

\begin{proposition}\label{prop:dev:rela}
Consider a set of purely deviatoric strains $\bbeps^{(i)}$, for $i=1,\dots,\nK$, such that
\[
\bK=\sum_{i=1}^{\nK} \frac{\bbeps^{(i)}\otimes\bbeps^{(i)}}{\|\bbeps^{(i)}\|^2} \quad \text{with} \quad \bbeps^{(i)}\dc\bbeps^{(j)}=\|\bbeps^{(i)}\|\, \|\bbeps^{(j)}\|\,  \delta_{ij}.
\]
Then, at the first order, the associated strain field solutions $\beps^{(i)}(\bx)$ satisfy the local identity 
\begin{equation}\label{expl:rela:K:1}
\mu(\bx) = \mu_0+\frac{\lK^{-1}}{2}\sum_{i=1}^{\nK}\bigg[1- \frac{\dev[\beps^{(i)}(\bx)]\dc\dev[\bbeps^{(i)}]}{\|\bbeps^{(i)}\|^2}\bigg] + o\big(\|\delta\Lm\|\big),
\end{equation}
with 
\[
\lK^{-1}=\frac{2\mu_0 \big( d\kappa_0+2(d-1)\mu_0 \big)}{d(d-1)(\kappa_0+2\mu_0)}.
\]
Equation \eqref{expl:rela:K:1} can be recast in terms of equivalent strains as
\begin{equation}\label{expl:rela:K}
\mu(\bx)  = \mu_0+\frac{\lK^{-1}}{2}\sum_{i=1}^{\nK}\bigg[1-\frac{\epsei(\bx)}{\bepsei}\bigg] + o\big(\|\delta\Lm\|\big),
\end{equation}
while the hydrostatic strain fields satisfy $\epszi(\bx) = O\big(\|\delta\Lm\|\big)$.

\end{proposition}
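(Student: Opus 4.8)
The plan is to specialize the general local identity of Lemma~\ref{lemma:prop:proj} to the deviatoric projector $\bP=\bK$, exactly as Proposition~\ref{prop:sph:rela} was obtained by taking $\bP=\bJ$. The first step is to identify the effective modulus $\delta p$ appearing in that lemma. Inserting the isotropic decomposition $\delta\Lm=d\,\delta\kappa\,\bJ+2\,\delta\mu\,\bK$ from \eqref{REFisot} and using that $\bJ$ and $\bK$ are mutually orthogonal projectors with $\bK\qc\bK=\nK$ and $\bJ\qc\bK=0$, one finds $\delta\Lm\qc\bK=2\nK\,\delta\mu$, so that $\delta p(\bx)=\nK^{-1}\,\delta\Lm(\bx)\qc\bK=2\,\delta\mu(\bx)=2\big(\mu(\bx)-\mu_0\big)$. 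Solving for $\mu(\bx)$ and feeding the hypotheses of the proposition (the $\bbeps^{(i)}$ form an orthogonal eigentensor basis of $\bK$) into Lemma~\ref{lemma:prop:proj} then produces the prefactor $\lK^{-1}/2$ multiplying the sum over the $\nK$ experiments.

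Next I would recast the parallel component $\eps\para^{(i)}/\|\bbeps^{(i)}\|$ featured in Lemma~\ref{lemma:prop:proj} in terms of deviatoric strains so as to reach \eqref{expl:rela:K:1}. Since each $\bbeps^{(i)}$ is purely deviatoric, $\tr[\bbeps^{(i)}]=0$; decomposing $\beps^{(i)}=\tfrac1d\tr[\beps^{(i)}]\bm{I}+\dev[\beps^{(i)}]$ then annihilates the spherical part under contraction with $\bbeps^{(i)}$, giving $\beps^{(i)}\dc\bbeps^{(i)}=\dev[\beps^{(i)}]\dc\dev[\bbeps^{(i)}]$. Dividing by $\|\bbeps^{(i)}\|^2$ and using the definition of the parallel component, equivalently \eqref{eq:epspara:P} for $\bP=\bK$, identifies $\eps\para^{(i)}/\|\bbeps^{(i)}\|$ with the ratio appearing in \eqref{expl:rela:K:1}.

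The one genuinely computational step is the evaluation of $\lK^{-1}$, which I expect to be the main obstacle, being error-prone rather than conceptually deep. Starting from $\lK=\tfrac1d\big[(2d(d+1)-4)\alpha_0+(d-1)\beta_0\big]$ of Lemma~\ref{def:decomp:l:nl} and inserting $\alpha_0=1/(4\mu_0)$ together with $\beta_0$ from \eqref{def:psi}, I would put everything over the common denominator $d\kappa_0+2(d-1)\mu_0$. The simplification hinges on the factorization $d(d+1)-2=(d+2)(d-1)$, which lets one extract a factor $(d-1)$ and collapse the numerator to $d^2(d-1)(\kappa_0+2\mu_0)$; inverting the result yields the stated $\lK^{-1}=2\mu_0\big(d\kappa_0+2(d-1)\mu_0\big)/\big(d(d-1)(\kappa_0+2\mu_0)\big)$.

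Finally, to pass to the equivalent-strain form \eqref{expl:rela:K} I would invoke Lemma~\ref{lemma:orth:para}: at first order the solution $\beps^{(i)}(\bx)$ is collinear with the deviatoric macroscopic strain $\bbeps^{(i)}$, so its hydrostatic component satisfies $\epszi(\bx)=O(\|\delta\Lm\|)$ and the ratio $\dev[\beps^{(i)}]\dc\dev[\bbeps^{(i)}]/\|\bbeps^{(i)}\|^2$ coincides at this order with $\epsei(\bx)/\bepsei$, independently of the normalization chosen for the equivalent strain. Substituting this into \eqref{expl:rela:K:1} gives \eqref{expl:rela:K} up to the $o(\|\delta\Lm\|)$ remainder, which closes the argument.
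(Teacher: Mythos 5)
Your proposal is correct and follows exactly the route the paper intends: Proposition~\ref{prop:dev:rela} is obtained by specializing Lemma~\ref{lemma:prop:proj} to $\bP=\bK$ (giving $\delta p=2\,\delta\mu$ since $\delta\Lm\qc\bK=2\nK\,\delta\mu$), evaluating $\lK^{-1}$ from Lemma~\ref{def:decomp:l:nl}, and using Lemma~\ref{lemma:orth:para} together with \eqref{eq:epspara:P} and \eqref{def:eps:z:eq} for the deviatoric and equivalent-strain rewritings. Your algebra for $\lK^{-1}$, including the factorization $2d(d+1)-4=2(d+2)(d-1)$ and the collapse of the numerator to $d^2(d-1)(\kappa_0+2\mu_0)$, checks out.
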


\begin{remark}
In the local reconstruction formulae of propositions \ref{prop:sph:rela} and \ref{prop:dev:rela} it is assumed that the mean moduli $\kappa_0$, $\mu_0$ are known a priori. However, it can be seen that the only knowledge of the ratio $r_0=\kappa_0/\mu_0$ is actually sufficient to determine the quantities $\delta\kappa(\bx)/\kappa_0$ and $\delta\mu(\bx)/\mu_0$ uniquely.
\end{remark}

\section{Macroscopic isotropy and energy-based approach}\label{sec:energy:approach}

In this section we revisit the local equations \eqref{expl:rela:J} and \eqref{expl:rela:K} for the case of macroscopically isotropic materials. For such configurations, we establish that only one strain field is actually sufficient to reconstruct each elastic modulus locally. Although this result can be established using the strain-based approach of Section \ref{sec:strain:approach}, we propose here an alternative energy-based procedure. The overall approach revolves around the quantification of local strain field fluctuations through the derivation of second-order moments of the strain fields which are defined as mean values of strain-based quadratic quantities, see \cite{Bobeth,Kreher}.

 \subsection{Macroscopically isotropic configurations}

First, we state the definition of a macroscopically isotropic material as follows.
\begin{definition}\label{def:macro:isot}
A given medium is said to be macroscopically isotropic if the corresponding effective elasticity tensor ${\,\widetilde{\!\Lm}}$ defined by Eqn. \eqref{def:L:eff} is isotropic.
\end{definition}
Now, the aim is to characterize the quadratic quantity $\big\langle \delta\Lm: \bGz\,  \delta\Lm  \big\rangle$ for such configurations. According to the second-order asymptotics \eqref{key:eqns:2:exp}, this term satisfies   
\[
\big\langle \delta\Lm: \bGz\,  \delta\Lm  \big\rangle = \Lmz - {\,\widetilde{\!\Lm}} + \big\langle \delta\Lm \big\rangle + o\big(\|\delta\Lm\|^2\big).
\]
As a consequence, if the medium considered is both microscopically and macroscopically isotropic then the fourth-order tensor $\big\langle \delta\Lm: \bGz\,  \delta\Lm  \big\rangle$ is itself isotropic and there exist $a,b\in\mathbb{R}$ such that
\[
\big\langle \delta\Lm: \bGz\,  \delta\Lm  \big\rangle=a\,\bJ+b\,\bK.
\]
The coefficients $a$ and $b$ can be computed by projection against the tensors $\bJ$ and $\bK$. According to the identity \eqref{def:norm} and the definition of the Green's operator then one has for $\bP=\bJ$ or $\bP=\bK$:
\begin{equation}\label{macro:isot}\begin{aligned}
\big\langle \delta\Lm: \bGz\,  \delta\Lm  \big\rangle \qc \bP & = \sum_{\substack{\bxi\in\mathcal{L}' \\ \bxi\neq\bm{0}} } \big[\widehat{\delta\Lm}(\bxi):\hbGz(\bxi):\widehat{\delta\Lm}(\bxi)^*\big]\qc\bP  \\
& = \sum_{\substack{\bxi\in\mathcal{L}' \\ \bxi\neq\bm{0}} } \big(\widehat{\delta\mathcal{L}}(\bxi)\big)_{ijpq}\big(\hat{\Gamma}_0(\bxi)\big)_{pqrs}\big(\widehat{\delta\mathcal{L}}(\bxi)^*\big)_{rsk\ell}\mathcal{P}_{ijk\ell} \\
& = \sum_{\substack{\bxi\in\mathcal{L}' \\ \bxi\neq\bm{0}} } |\widehat{\delta p}(\bxi)|^2 \,  \hbGz(\bxi)\qc \bP
\end{aligned}\end{equation}
where $\widehat{\delta p}(\bxi)=\nP^{-1}\big(\hdLm(\bxi)\qc\bP\big)$ and the notation $|\!\cdot\!|$ denotes the complex modulus. Therefore, according to Lemma \ref{def:decomp:l:nl}, one obtains
\begin{equation}\label{macro:isot:2}
\big\langle \delta\Lm: \bGz\,  \delta\Lm  \big\rangle = \sum_{\substack{\bxi\in\mathcal{L}' \\ \bxi\neq\bm{0}} } \Big\{ |d\,\widehat{\delta \kappa}(\bxi)|^2 \frac{\lJ}{\nJ} \bJ + |2\,\widehat{\delta \mu}(\bxi)|^2 \frac{\lK}{\nK} \bK  \Big\}.
\end{equation}
Based on the definition of the tensor $\bGziso$ and by retracing the derivation of equations \eqref{macro:isot} and \eqref{macro:isot:2} one finally arrives at the following result.
\begin{lemma}\label{lemma:macro:isto}
If the material configuration considered is macroscopically isotropic in the sense of Definition \ref{def:macro:isot} then one has
\[
\big\langle \delta\Lm: \bGz\  \delta\Lm  \big\rangle =  \big\langle \delta\Lm: \bGziso :  \delta\Lm  \big\rangle.
\]
\end{lemma}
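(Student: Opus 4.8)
The plan is to show that both fourth-order tensors $\big\langle\delta\Lm:\bGz\,\delta\Lm\big\rangle$ and $\big\langle\delta\Lm:\bGziso:\delta\Lm\big\rangle$ are isotropic, and then to check that they share the same projections onto $\bJ$ and $\bK$. Since an isotropic element of $\sots \!\big(\!\sots\! \mathbb{R}^d\big)$ is entirely determined by the two scalars $(\cdot)\qc\bJ$ and $(\cdot)\qc\bK$, equating these projections forces the two tensors to coincide. The mechanism that makes this work is precisely Lemma \ref{def:decomp:l:nl}: the nonlocal, $\bxi$-dependent part obeys $\hbGzperp(\bxi)\qc\bJ=\hbGzperp(\bxi)\qc\bK=0$, so it contributes nothing to either projection and only the constant isotropic part $\bGziso$ survives there.

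First I would establish the isotropy of each side. For the right-hand member this is unconditional: writing $\delta\Lm(\bx)=d\,\delta\kappa(\bx)\bJ+2\,\delta\mu(\bx)\bK$ and $\bGziso=\tfrac{\lJ}{\nJ}\bJ+\tfrac{\lK}{\nK}\bK$, the orthogonality of $\bJ$ and $\bK$ yields the pointwise identity $\delta\Lm:\bGziso:\delta\Lm=(d\,\delta\kappa)^2\tfrac{\lJ}{\nJ}\bJ+(2\,\delta\mu)^2\tfrac{\lK}{\nK}\bK$, whose spatial average is manifestly of the form $a\bJ+b\bK$. For the left-hand member I would invoke the macroscopic isotropy hypothesis: rearranging the second-order expansion \eqref{key:eqns:2:exp} gives $\big\langle\delta\Lm:\bGz\,\delta\Lm\big\rangle=\Lmz+\big\langle\delta\Lm\big\rangle-{\,\widetilde{\!\Lm}}+o\big(\|\delta\Lm\|^2\big)$, in which $\Lmz$ is isotropic by construction, $\langle\delta\Lm\rangle=d\langle\delta\kappa\rangle\bJ+2\langle\delta\mu\rangle\bK$ is isotropic, and ${\,\widetilde{\!\Lm}}$ is isotropic by Definition \ref{def:macro:isot}; hence, at the working order, the left-hand member carries no anisotropic part and is likewise of the form $a'\bJ+b'\bK$.

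With both tensors reduced to $\bJ,\bK$ combinations, it remains to equate their projections. Applying Parseval's identity exactly as in \eqref{macro:isot} and inserting $\delta\Lm=d\,\delta\kappa\,\bJ+2\,\delta\mu\,\bK$, the projection $\big\langle\delta\Lm:\bGz\,\delta\Lm\big\rangle\qc\bP$ collapses to $\sum_{\bxi\neq\bm{0}}|\widehat{\delta p}(\bxi)|^2\,\hbGz(\bxi)\qc\bP$; by Lemma \ref{def:decomp:l:nl} one has $\hbGz(\bxi)\qc\bP=\bGziso\qc\bP=\lP$ for every $\bxi\neq\bm{0}$, so this equals $\lP\sum_{\bxi\neq\bm{0}}|\widehat{\delta p}(\bxi)|^2$, which is exactly the $\bP$-projection of $\big\langle\delta\Lm:\bGziso:\delta\Lm\big\rangle$ obtained in the same way. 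Carrying this out for both $\bP=\bJ$ and $\bP=\bK$ identifies $(a,b)=(a',b')$ and completes the argument, as summarized by \eqref{macro:isot:2}.

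I expect the main delicacy to be the bookkeeping of the zero-frequency (mean) mode $\bxi=\bm{0}$: the operator $\bGz$ annihilates it, since $\hbGz(\bm{0})=\bm{0}$, whereas the purely local action of $\bGziso$ does not, so the two projections agree term by term only over $\bxi\neq\bm{0}$ and the mean contribution must be accounted for consistently — this is precisely where the macroscopic isotropy assumption pays off, by guaranteeing that no residual anisotropic piece carried by $\hbGzperp$ can survive in the comparison. I would therefore phrase the concluding step entirely at the level of the $\bJ$ and $\bK$ projections, where Lemma \ref{def:decomp:l:nl} applies cleanly, rather than by manipulating the full operator decomposition \eqref{def:ortho:decomp} directly.
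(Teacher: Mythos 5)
Your argument is essentially the paper's own proof: the left-hand side is shown to be isotropic by combining the second-order expansion \eqref{key:eqns:2:exp} with Definition \ref{def:macro:isot} (plus the quadratic homogeneity of the form in $\delta\Lm$, which you should state to upgrade ``anisotropic part is $o(\|\delta\Lm\|^2)$'' to ``anisotropic part vanishes''), and the two sides are then identified through their $\bJ$- and $\bK$-projections computed via Plancherel and the constancy of $\hbGz(\bxi)\qc\bP=\lP$ from Lemma \ref{def:decomp:l:nl}; this is precisely the chain \eqref{macro:isot}--\eqref{macro:isot:2}.

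One correction to your closing paragraph, however. You rightly notice that the two Parseval sums differ by the $\bxi=\bm{0}$ term, but your proposed resolution is the wrong mechanism: that discrepancy equals $\langle\delta\Lm\rangle:\bGziso:\langle\delta\Lm\rangle$, which is an \emph{isotropic} tensor with generally nonzero $\bJ$- and $\bK$-projections, so neither macroscopic isotropy nor the property $\hbGzperp(\bxi)\qc\bJ=\hbGzperp(\bxi)\qc\bK=0$ can dispose of it. The equality in the lemma holds exactly only under the normalization $\widehat{\delta\Lm}(\bm{0})=\langle\delta\Lm\rangle=\bm{0}$, i.e. $\kappa_0=\langle\kappa\rangle$ and $\mu_0=\langle\mu\rangle$ --- an assumption the paper uses implicitly (and adopts explicitly in its examples) but does not state at this point. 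You should invoke that normalization rather than attribute the cancellation to macroscopic isotropy.
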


\subsection{Local strain field fluctuations}

In order to establish the sought local equations then the first step is to relate the local strain field fluctuations to the homogenized elasticity parameters. This can be achieved by quantifying the effective modulus sensitivities to a local parameter change. Owing to Hill's lemma, the effective elasticity tensor $\Lmh$ defined by Eqn. \eqref{def:L:eff} is such that the macroscopic and the averaged microscopic elastic energy densities are equal, i.e.
\begin{equation}\label{def:Lmh}
\big\langle\beps(\bx)\dc\Lm(\bx)\dc\beps(\bx)\big\rangle=\bbeps\dc\Lmh\dc\bbeps.
\end{equation}
Differentiating Eqn. \eqref{def:Lmh} with respect to a scalar parameter $t$ entails
\[
\big\langle\beps\dc\frac{\p\Lm}{\p t}\dc\beps\big\rangle + 2\big\langle\beps\dc\Lm\dc\frac{\p\beps}{\p t}\big\rangle=\frac{\p}{\p t}\big(\bbeps\dc\Lmh\dc\bbeps\big).
\]
In this equation the second left-hand side term vanishes owing to Hill's lemma, see e.g. \cite{PCS:NLC}, so that we finally obtain
\begin{equation}\label{derv:modulus}
\big\langle\beps\dc\frac{\p\Lm}{\p t}\dc\beps\big\rangle=\bbeps\dc\frac{\p\Lmh}{\p t}\dc\bbeps.
\end{equation}
Moreover, owing to the second-order asymptotics \eqref{key:eqns:2:exp} then one has
\begin{equation}\label{derv:Lmh}
\frac{\p\Lmh}{\p t}= \frac{\p}{\p t} \Big[ \big\langle \delta\Lm \big\rangle - \big\langle \delta\Lm: \bGz\,  \delta\Lm  \big\rangle  + o\big(\|\delta\Lm\|^2\big) \Big].
\end{equation}
Considering now that the medium is macroscopically isotropic, then substituting the identity of Lemma \ref{lemma:macro:isto} in the equations \eqref{derv:Lmh} and \eqref{derv:modulus} entails
\begin{equation}\label{eq:fluct}
\big\langle\beps\dc\frac{\p\Lm}{\p t}\dc\beps\big\rangle=\bbeps\dc\Big[ \big\langle \frac{\p}{\p t} \delta\Lm \big\rangle - 2\big\langle \delta\Lm: \bGziso :  \frac{\p}{\p t} \delta\Lm  \big\rangle  + \frac{\p}{\p t} o\big(\|\delta\Lm\|^2\big) \Big]:\bbeps.
\end{equation}
For the sake of simplicity it can be assumed that the representative volume element $\mathcal{V}$ considered is described by a piecewise-homogenous parameter distribution as:
\begin{equation}
\Lm(\bx)=\sum_{\phi=1}^{\Phi} \chi_\phi(\bx)\Lm_\phi,
\label{Ela:comp}
\end{equation}
where each phase $\phi$ is characterized by the indicator function $\chi_\phi(\bx)$ and the homogeneous elasticity tensor $\Lm_\phi$ with constant moduli $\kappa_\phi$, $\mu_\phi$. Moreover, we denote the corresponding elasticity contrast as $\delta\Lm_\phi=\Lm_\phi-\Lmz$. Note that the total number $\Phi\in\mathbb{N}$ of phases can be arbitrarily large in the ensuing developments. Following Definition \eqref{e:mean} of the spatial average over $\mathcal{V}$, one introduces the local average in a given phase $\phi$ occupying the domain $V_\phi\subset V$, as
\[
{\langle\eta\rangle}_\phi=\frac{1}{|V_\phi|}\int_{V_\phi}\eta(\bx)\td\bx   \\
\]
while $f_\phi=|V_\phi|/|V|$ denotes the corresponding volume fraction.

For such configurations, upon choosing $t=\nP^{-1}\,\delta\Lm_\phi\qc\bP=\delta p_\phi$ for a given phase $\phi$ with $\bP=\bJ$ or $\bP=\bK$ then $\frac{\p}{\p t} \delta\Lm=\chi_\phi(\bx)\bP$ so that Eqn. \eqref{eq:fluct} leads to the following result.

\begin{lemma}\label{lemma:energy}
If the medium considered is macroscopically isotropic then for any given phase $\phi$:
\[
{\langle\beps\dc \bP \dc\beps\rangle}_\phi = \big( 1 - 2\,\lP\nP^{-1}\,\delta p_\phi\big) \, \bbeps\dc\bP\dc\bbeps  + o\big(\|\delta\Lm\|\big).
\]
\end{lemma}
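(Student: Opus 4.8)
The plan is to insert the specific choice $t=\delta p_\phi$ into the identity \eqref{eq:fluct}, where by construction $\p\Lm/\p t=\p\,\delta\Lm/\p t=\chi_\phi\bP$, and then to evaluate its left-hand side and its two leading right-hand side contributions separately, exploiting the piecewise-homogeneous structure \eqref{Ela:comp}. Dividing the resulting identity by the volume fraction $f_\phi$ will produce the claim. The whole argument is thus a bookkeeping exercise built on Lemma \ref{def:decomp:l:nl} (to collapse $\bGziso$) and on the indicator functions $\chi_\phi$ (to pass from spatial averages over $\mathcal{V}$ to phase averages).

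First I would treat the left-hand side: substituting $\p\Lm/\p t=\chi_\phi\bP$ gives $\big\langle\chi_\phi\,\beps\dc\bP\dc\beps\big\rangle$, and since $\chi_\phi$ is the indicator of phase $\phi$ while $\langle\,\cdot\,\rangle$ averages over $\mathcal{V}$, this equals $f_\phi\,{\langle\beps\dc\bP\dc\beps\rangle}_\phi$, i.e. the target quantity up to the factor $f_\phi$. The first right-hand side term reduces by the same device: $\bbeps\dc\langle\chi_\phi\bP\rangle\dc\bbeps=\langle\chi_\phi\rangle\,(\bbeps\dc\bP\dc\bbeps)=f_\phi\,(\bbeps\dc\bP\dc\bbeps)$.

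The second right-hand side term is where Lemma \ref{def:decomp:l:nl} is used. Because $\bJ$ and $\bK$ are orthogonal projectors, one has $\bGziso\dc\bP=(\lP/\nP)\bP$ for $\bP=\bJ$ or $\bK$; and microscopic isotropy gives $\delta\Lm(\bx)\dc\bP=\delta p(\bx)\,\bP$ with $\delta p(\bx)=\nP^{-1}\delta\Lm(\bx)\qc\bP$. Combining these, the integrand $\delta\Lm\dc\bGziso\dc(\chi_\phi\bP)$ becomes $\chi_\phi\,(\lP/\nP)\,\delta p(\bx)\,\bP$. As $\delta p$ is piecewise-constant and equals $\delta p_\phi$ on $V_\phi$, the average factorizes as $\langle\chi_\phi\,\delta p\rangle=f_\phi\,\delta p_\phi$, so after contraction with $\bbeps$ on both sides this contribution equals $-2(\lP/\nP)\,f_\phi\,\delta p_\phi\,(\bbeps\dc\bP\dc\bbeps)$. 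Collecting the three pieces and dividing through by the fixed positive constant $f_\phi$ then yields the stated relation, since $\lP/\nP=\lP\nP^{-1}$.

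The one point requiring care — and what I expect to be the \emph{main obstacle} — is the remainder: one must argue that $\bbeps\dc\big[\tfrac{\p}{\p t}o\big(\|\delta\Lm\|^2\big)\big]\dc\bbeps$ contributes only at order $o\big(\|\delta\Lm\|\big)$. Since $t=\delta p_\phi$ itself scales like $\|\delta\Lm\|$, differentiating a remainder of order $o\big(\|\delta\Lm\|^2\big)$ with respect to $t$ is expected to lower the order by one, giving $o\big(\|\delta\Lm\|\big)$, which survives division by $f_\phi$; making this controlled rather than purely formal (uniform control of the $t$-derivative of the remainder) is the delicate step, while everything else is the projector algebra of $\bJ,\bK$ together with the $\chi_\phi$ accounting.
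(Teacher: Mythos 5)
Your proposal is correct and follows exactly the paper's route: the lemma is obtained by setting $t=\delta p_\phi$ in Eqn. \eqref{eq:fluct}, so that $\p\,\delta\Lm/\p t=\chi_\phi\bP$, collapsing $\bGziso\dc\bP$ to $(\lP/\nP)\bP$ via Lemma \ref{def:decomp:l:nl}, converting volume averages weighted by $\chi_\phi$ into $f_\phi$ times phase averages, and dividing by $f_\phi$. Your remark on the formal differentiation of the $o\big(\|\delta\Lm\|^2\big)$ remainder is a fair caveat, but the paper treats this step in the same (implicit) way.
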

This section is concluded by particularizing Lemma \ref{lemma:energy} for $\bP=\bJ$ and $\bP=\bK$.

\begin{proposition}\label{prop:geom:isot}
Consider a macroscopically isotropic medium in the sense of Definition \ref{def:macro:isot} and let $\beps$ denote the strain field solution for an applied macroscopic strain $\bbeps$. Then for any given phase $\phi$ the following properties hold:
\begin{itemize}
\item[--] If $\bbeps$ is purely spherical then $\langle\epse^2\rangle_\phi = o\big(\|\delta\Lm\|\big)$ and $\beps$ is such that
 \begin{equation}\label{expl:rela:J:geom:isot}
\kappa_\phi =  \kappa_0+\frac{\nJ \lJ^{-1}}{2d}\bigg[1- \frac{\langle\epsz^2\rangle_\phi}{\bepsz^2}\bigg] + o\big(\|\delta\Lm\|\big).
\end{equation}
\item[--] If $\bbeps$ is purely deviatoric then $\langle\epsz^2\rangle_\phi = o\big(\|\delta\Lm\|\big)$ and $\beps$ satisfies
\begin{equation}\label{expl:rela:K:geom:isot}
\mu_\phi  = \mu_0+\frac{\nK\lK^{-1}}{4}\bigg[1-\frac{\langle\epse^2\rangle_\phi}{{\bepse}^2}\bigg] + o\big(\|\delta\Lm\|\big).
\end{equation}
\end{itemize}
\end{proposition}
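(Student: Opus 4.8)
The plan is to specialize Lemma~\ref{lemma:energy} to the two projectors $\bP=\bJ$ and $\bP=\bK$ and then invert, at first order, the resulting scalar relations so as to express $\kappa_\phi$ and $\mu_\phi$ in terms of phase-averaged squared strains. First I would record the two ingredients needed to pass from the tensorial statement of Lemma~\ref{lemma:energy} to the scalar equations \eqref{expl:rela:J:geom:isot}--\eqref{expl:rela:K:geom:isot}. On the one hand, because $\bJ$ and $\bK$ are orthogonal projectors, the quadratic forms entering Lemma~\ref{lemma:energy} are exactly the squared hydrostatic and equivalent strains, i.e. $\beps\dc\bJ\dc\beps=\epsz^2$ and $\beps\dc\bK\dc\beps=\epse^2$ (and likewise $\bbeps\dc\bJ\dc\bbeps=\bepsz^2$, $\bbeps\dc\bK\dc\bbeps=\bepse^2$), in accordance with \eqref{eq:epspara:P} and the definitions of Appendix~\ref{app:tensor}. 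On the other hand, I would evaluate the projected contrast $\delta p_\phi=\nP^{-1}\,\delta\Lm_\phi\qc\bP$ using \eqref{REFisot} together with $\bJ\qc\bJ=\nJ$ and $\bK\qc\bK=\nK$: this yields $\delta p_\phi=d\,\delta\kappa_\phi$ for $\bP=\bJ$ (since $\nJ=1$) and $\delta p_\phi=2\,\delta\mu_\phi$ for $\bP=\bK$.

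With these in hand, the spherical case follows by taking $\bbeps$ purely spherical. Applying Lemma~\ref{lemma:energy} with $\bP=\bJ$ gives $\langle\epsz^2\rangle_\phi=(1-2d\,\lJ\,\delta\kappa_\phi)\,\bepsz^2+o\big(\|\delta\Lm\|\big)$; since the correction is already of order $\|\delta\Lm\|$, dividing by $\bepsz^2\neq0$ and solving for $\delta\kappa_\phi$ preserves first-order accuracy and reproduces \eqref{expl:rela:J:geom:isot} after writing $\kappa_\phi=\kappa_0+\delta\kappa_\phi$ and inserting $\nJ=1$. The vanishing of the equivalent strain, $\langle\epse^2\rangle_\phi=o\big(\|\delta\Lm\|\big)$, comes from the \emph{same} Lemma~\ref{lemma:energy} but now with $\bP=\bK$: a purely spherical $\bbeps$ has $\bbeps\dc\bK\dc\bbeps=0$, so the right-hand side is $o\big(\|\delta\Lm\|\big)$ outright. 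The deviatoric case is entirely symmetric: taking $\bbeps$ purely deviatoric with $\bP=\bK$ yields $\langle\epse^2\rangle_\phi=(1-4\,\lK\,\nK^{-1}\,\delta\mu_\phi)\,\bepse^2+o\big(\|\delta\Lm\|\big)$, which inverts to \eqref{expl:rela:K:geom:isot}, while $\bP=\bJ$ together with $\bbeps\dc\bJ\dc\bbeps=0$ forces $\langle\epsz^2\rangle_\phi=o\big(\|\delta\Lm\|\big)$.

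Since Lemma~\ref{lemma:energy} already carries the analytical content, I expect no genuine obstacle: the remaining work is purely algebraic. The only point requiring care is the bookkeeping of the scalar constants---correctly computing $\delta p_\phi$ through the contractions $\bJ\qc\bJ=\nJ$ and $\bK\qc\bK=\nK$, and tracking the factors $\nP$ and $\lP$ so that the prefactors $\nJ\lJ^{-1}/(2d)$ and $\nK\lK^{-1}/4$ emerge correctly---together with the observation that inverting the affine relation for $\delta p_\phi$ is legitimate at first order precisely because the multiplicative correction is $O\big(\|\delta\Lm\|\big)$. It is worth emphasizing that, in each case, both scalar identities are consequences of one and the same tensorial lemma applied to complementary projectors, which is exactly what allows a single strain field---rather than the $\nK$ experiments of Proposition~\ref{prop:dev:rela}---to suffice in the macroscopically isotropic setting.
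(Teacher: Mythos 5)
Your proposal is correct and follows essentially the same route as the paper, which simply particularizes Lemma \ref{lemma:energy} to $\bP=\bJ$ and $\bP=\bK$ (using $\delta p_\phi=d\,\delta\kappa_\phi$ or $2\,\delta\mu_\phi$ and the vanishing of $\bbeps\dc\bP\dc\bbeps$ for the complementary projector). The only slip is cosmetic: by \eqref{def:eps:z:eq} one has $\beps\dc\bJ\dc\beps=d\,\epsz^2$ and $\beps\dc\bK\dc\beps=\tfrac{d}{d-1}\,\epse^2$ rather than equality without prefactors, but since only the ratios $\langle\epsz^2\rangle_\phi/\bepsz^2$ and $\langle\epse^2\rangle_\phi/\bepse^2$ enter, these constants cancel and your displayed relations and final formulae are unaffected.
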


The results of Proposition \ref{prop:geom:isot} have been established for a piecewise-homogeneous and macroscopically isotropic medium. It is straightforward to extent them to the case of continuous material parameter distributions by reinterpreting Eqns. \eqref{derv:modulus} and \eqref{derv:Lmh} in terms of the Fréchet derivative of the energy functional, which corresponds to an infinitesimal material parameter perturbation at a given point $\bx$. Therefore, the identities \eqref{expl:rela:J:geom:isot} and \eqref{expl:rela:K:geom:isot} hold also true with $\kappa_\phi$, $\mu_\phi$ and $\langle\epsz^2\rangle_\phi$, $\langle\epse^2\rangle_\phi$ respectively superseded by the local quantities $\kappa(\bx)$, $\mu(\bx)$ and $\epsz(\bx)^2$, $\epse(\bx)^2$.

\begin{remark}
On noting that Lemma \ref{lemma:orth:para} implies the following first-order identity
\[
\frac{\eps\para(\bx)}{\|\bbeps\|}=\frac{1}{2}\left(1+\frac{\eps\para(\bx)^2}{\|\bbeps\|^2}\right),
\]
and using Eqns. \eqref{eq:epspara:P} and \eqref{def:eps:z:eq}, then Proposition \ref{prop:geom:isot} is consistent with propositions \ref{prop:sph:rela} and \ref{prop:dev:rela} given the fact that the macroscopically isotropic nature of the material configuration considered makes it redundant to use multiple strain field solutions. Indeed, if the assumption of macroscopic isotropy is satisfied then the elastic response of the medium is identical for rotated directions of applied strain. Whether this property holds or not for a given material sample can be assessed directly in practice. As a consequence, for such geometries only one strain field solution is actually sufficient to identify each one of the moduli $\kappa(\bx)$ and $\mu(\bx)$.
\end{remark}

\section{Case of bounded domains}\label{sec:bounded:dom}

This section investigates the case of a macro-homogeneous and bounded elastic medium. Consistently with the previous developments, it is assumed that the displacement field satisfies $\bm{u}(\bx)=\bbeps\cdot\bx$ on $\partial \mathcal{V}$ with given $\bbeps\in\sots \mathbb{R}^d$, so that the applied mechanical loading is compatible with a macroscopic strain satisfying the mean value property \eqref{e:mean}.

For such configurations, the integral formulation \eqref{LS} holds with the periodic Green's operator $\bGz$ being superseded by the Green's operator $\bG$ for the bounded geometry $\mathcal{V}$ and the elasticity tensor $\Lmz$. Following \cite{Willis1977,Willis1981}, the assumption that $\mathcal{V}$ is large compared to the length-scale of the inhomogeneities, allows to make the following translation-invariant approximation for $\bx$ distant from $\partial \mathcal{V}$:
\begin{equation}\label{trans:inv:approx}
\big[\bG \btau \big](\bx)=\int_{\mathcal{V}} \bGzi(\bx-\by):\big[\btau(\by)-\langle\btau\rangle\big]\td\by
\end{equation}
with $\bGzi:\mathcal{V}\to\sots \!\big(\!\sots\! \mathbb{R}^d\big)$ being the infinite-body Green's function associated with $\Lmz$ and vanishing displacements at infinity. Note that the kernel $\bGzi$ being singular then evaluating the above integral requires special treatment. This approximation relies on the decaying behavior of the Green's function and it amounts to neglect the contribution of boundary terms to the integral operator $\bG$ by assuming that they are significant only in a region close to $\partial\mathcal{V}$. With equation \eqref{trans:inv:approx} at hand, then deriving the sought elasticity-based local identities can be achieved by characterizing the local and non-local contributions to this integral operator as discussed in Remark \ref{rmk:l:nl}. 

Alternatively, the approach adopted hereafter is built on the Fourier-based analysis of Section~\ref{sec:strain:approach} so as to take full advantage of the orthogonal decomposition of Lemma \ref{def:decomp:l:nl}. The aim is to show what partial differential equation (PDE) can be derived and solved for bounded domain configurations in order to relate locally the elastic moduli to the strain field data. This is done using an approach that is analogous to the one employed to solve the auxiliary problem \eqref{eq:ref:eps:tau}. The procedure investigated hereafter relies on a reinterpretation of the Fourier algebra in terms of partial differential operators. 

\subsection{Derivation of a PDE for the sought material parameters}\label{PDE:parameters}

Starting from the equilibrium equation in \eqref{eq:elas}, then for all $\bx\in\mathcal{V}$ one has at the first order
\begin{equation}\label{div:id}
\nab\cdot\big(\delta\Lm(\bx):\bbeps\big)=-\nab\cdot\big(\Lmz:\delta\beps(\bm{x})\big)
\end{equation}
where $\delta\Lm(\bx)=\Lm(\bx)-\Lmz$ and $\delta\beps(\bx)=\beps(\bx)-\bbeps$. Moreover, let introduce a displacement field $\bm{u}$ such that $\delta\beps(\bx)=\nab\!\sym \bm{u}(\bx)$. Assuming that the macroscopic strain $\bbeps$ is either purely spherical or purely deviatoric, i.e. $\bP\dc\bbeps=\bbeps$ with either $\bP=\bJ$ or $\bP=\bK$, then $\delta\Lm(\bx):\bbeps=\delta p(\bx)\,\bbeps$ with $\delta p(\bx)=d\,\delta\kappa(\bx)$ or $\delta p(\bx)=2\,\delta \mu(\bx)$ respectively. With the terms in \eqref{div:id} reducing to
\[\left\{\begin{aligned}
& \nab\cdot\big(\delta\Lm:\bbeps\big)=\nab \delta p\cdot \bbeps, \\
& \nab\cdot(\Lmz:\delta\beps)=\omega_0\nab\nab\cdot\bm{u}+\mu_0\,\Delta\bm{u} \qquad \text{where} \qquad \omega_0=\frac{d\,\kappa_0+(d-2)\mu_0}{d}.
\end{aligned}\right.\]
one obtains
\begin{equation}\label{eq0}
-\nab \delta p\cdot \bbeps=\omega_0\nab\nab\cdot\bm{u}+\mu_0\,\Delta\bm{u}.
\end{equation}
Applying the divergence operator to \eqref{eq0} yields 
\begin{equation}\label{eq1}
-\nab^{\otimes2}\delta p:\bbeps=(\omega_0+\mu_0)\Delta\nab\cdot\bm{u}.
\end{equation}
since $\nab\cdot(\nab \delta p\cdot \bbeps)=\nab\nab \delta p : \bbeps$, a term that can be rewritten in short using the tensor given in components as $(\nab^{\otimes2}\delta p)_{ij}=\delta p_{,ij}$ in terms of partial derivatives with respect to coordinates $i$ and $j$. Now, by applying the Laplace operator to \eqref{eq0} and using the fact that it commutes with the gradient operator we obtain
\[
-\Delta(\nab \delta p\cdot \bbeps) =\omega_0\,\nab\Delta\nab\cdot\bm{u}+\mu_0\,\Delta\Delta\bm{u}.
\]
Substituting the term $\Delta\nab\cdot\bm{u}$ from \eqref{eq1} in the above equation entails
\[
\Delta\Delta\bm{u}=-\frac{1}{\mu_0}\Delta(\nab \delta p\cdot \bbeps)+\tau_0\nab(\nab^{\otimes2}\delta p:\bbeps) \qquad \text{with} \qquad \tau_0=\frac{\omega_0}{\mu_0(\omega_0+\mu_0)}.
\]
Finally, taking the gradient of the previous identity yields
\begin{equation}\label{eq2}
\Delta\Delta \nab\bm{u}=-\frac{1}{\mu_0}\Delta\nab^{\otimes2} \delta p\cdot \bbeps + \tau_0\nab^{\otimes4}\delta p:\bbeps,
\end{equation}
where the fourth-order tensor $\nab^{\otimes4}\delta p$ is defined in components by $(\nab^{\otimes4}\delta p)_{ijk\ell}=\delta p_{,ijk\ell}$.\\

We now consider a set of strain field solutions $\beps^{(i)}$ satisfying $\langle\beps^{(i)}\rangle=\bbeps^{(i)}$ for $i=1,\dots,\nP$ and we write $\nab\!\sym\bm{u}^{(i)}=\delta\beps^{(i)}=\beps^{(i)}-\bbeps^{(i)}$. By symmetry we have that $\nab\bm{u}^{(i)}:\bbeps^{(i)}=\delta\beps^{(i)}:\bbeps^{(i)}$ so that applying the inner product with $\bbeps^{(i)}/\|  \bbeps^{(i)} \|^2$ in \eqref{eq2} and summing for $i=1,\dots,\nP$ leads to
\[
\Delta\Delta\sum_{i=1}^{\nP}\frac{\delta\beps^{(i)}:\bbeps^{(i)}}{\|  \bbeps^{(i)} \|^2} = \sum_{i=1}^{\nP}\Big\{ -\frac{1}{\mu_0}\Delta ( \nab^{\otimes2} \delta p\cdot \bbeps^{(i)} ) : \bbeps^{(i)} + \tau_0\nab^{\otimes4} \delta p::\bbeps^{(i)}\otimes\bbeps^{(i)} \Big\} \frac{1}{\|  \bbeps^{(i)} \|^2}.
\]
As in Section \ref{sec:strain:approach}, if the tensors $\bbeps^{(i)}\in\sots\mathbb{R}^d$ satisfy Eqn. \eqref{basis:P}, i.e. they constitute an orthogonal basis for the fourth-order tensor projector $\bm{\mathcal{P}}$ with $\bm{\mathcal{P}}=\bm{\mathcal{J}}$ or $\bm{\mathcal{K}}$, then the previous equation can finally be recast as follows
\begin{equation}\label{eq:generic}
\Delta\Delta\sum_{i=1}^{\nP}\frac{\delta\beps^{(i)}:\bbeps^{(i)}}{\|  \bbeps^{(i)} \|^2} =  -\frac{1}{\mu_0}\Delta  \nab^{\otimes2} \delta p : \tr_{23}[\bm{\mathcal{P}}] + \tau_0\nab^{\otimes4}\delta p::\bm{\mathcal{P}},
\end{equation}
where $\tr_{23}[\bm{\mathcal{P}}]$ is the second-order tensor with components $\big(\!\tr_{23}[\bm{\mathcal{P}}]\big)_{ij}=\big(\bm{\mathcal{P}}\big)_{ikkj}$ where index $k$ is summed. The identity \eqref{eq:generic} constitutes the sought PDE for the unknown material parameter $\delta p$. It is now particularized for spherical and deviatoric macroscopic strains in the next two subsections.

\subsection{Spherical macroscopic strain}

Considering the case $\bm{\mathcal{P}}=\bm{\mathcal{J}}$ with $\nJ=1$ and $\delta p(\bx)=d\,\delta\kappa(\bx)$, then in Eqn. \eqref{eq:generic} we have
\[
 \nab^{\otimes2} \delta p : \tr_{23}[\bm{\mathcal{J}}]=\frac{1}{d}\Delta \delta p \qquad \text{and} \qquad \nab^{\otimes4}\delta p::\bm{\mathcal{J}}=\frac{1}{d}\Delta\Delta \delta p
\]
so that 
\[
\Delta\Delta\frac{\delta\beps:\bbeps}{\|  \bbeps \|^2}=\left(\frac{\tau_0\mu_0-1}{d\mu_0}\right)\Delta\Delta \delta p.
\]
After integration and substituting the definitions of $\tau_0$ and $\omega_0$, this equation yields for all $\bx\in\mathcal{V}$:
\begin{equation}\label{form:kappa}
\delta\kappa(\bx)=-\left(\frac{d\kappa_0+2(d-1)\mu_0}{d}\right)\frac{\delta\beps(\bx):\bbeps}{\|  \bbeps \|^2} + k(\bx),
\end{equation}
where the field $k$ is an arbitrary function that is biharmonic in $\mathcal{V}$, i.e. $\Delta\Delta k=0$. This shows that the analytical solution obtained in Proposition \ref{prop:sph:rela} for periodic configurations is only a particular solution in a bounded domain. In such a case, to determine the elastic modulus uniquely one has to take into account additional boundary conditions, which can be derived by assessing the behaviors of $\delta\kappa$ and $\delta\beps$ and of their derivatives on $\partial \mathcal{V}$. In particular, boundary conditions that are suitable to a unique reconstruction could be obtained when the investigated sample is embedded in a homogenous and isotropic elastic matrix.

\subsection{Deviatoric macroscopic strains}

We now investigate the case $\bm{\mathcal{P}}=\bm{\mathcal{K}}$ with $\nK=\frac{d(d+1)}{2}-1$ and $\delta p(\bx)=2\,\delta\mu(\bx)$. In Eqn. \eqref{eq:generic} we have
\[
\nab^{\otimes2} \delta p : \tr_{23}[\bm{\mathcal{K}}]=\frac{(d-1)(d+2)}{2d}\Delta \delta p \qquad \text{and}\qquad \nab^{\otimes4}\delta p::\bm{\mathcal{K}}=\frac{(d-1)}{d}\Delta\Delta \delta p
\]
so that 
\[
\Delta\Delta\sum_{i=1}^{\nK}\frac{\delta\beps^{(i)}:\bbeps^{(i)}}{\|  \bbeps^{(i)} \|^2} =  \frac{(d-1)}{2d\mu_0}\big(2\tau_0\mu_0-d-2\big)\Delta\Delta \delta p.
\]
Integrating the bi-Laplace operator and based on the definitions of $\tau_0$ and $\omega_0$, then the left and right hand sides in the above equation turn out to be equal up to an additional and arbitrary function $m(\bx)$ that is biharmonic in $\mathcal{V}$, i.e. for all $\bx\in\mathcal{V}$:
\begin{equation}\label{form:mu}
\delta\mu(\bx)=-\frac{\mu_0\big(d\kappa_0+2(d-1)\mu_0\big)}{d(d-1)(\kappa_0+2\mu_0)}\sum_{i=1}^{\nK}\frac{\delta\beps^{(i)}(\bx):\bbeps^{(i)}}{\|  \bbeps^{(i)} \|^2} +  m(\bx).
\end{equation}
This shows again that the reconstruction formula of Proposition \ref{prop:dev:rela} yields a particular solution in a bounded domain while a unique reconstruction can be achieved when appropriate boundary conditions are met.

\section{Analytical examples}\label{sec:analytical:ex}

In this section, two analytical examples are investigated to illustrate the results of propositions \ref{prop:sph:rela}--\ref{prop:geom:isot}.

\subsection{Spherical inclusion}

The simplest analytical example is provided by the case of an unbounded elastic matrix with moduli $\kappa_0$, $\mu_0$, containing a spherical homogeneous inclusion $S_R$ with radius $R$, moduli $\kappa_1=\kappa_0+\delta\kappa$ and $\mu_1=\mu_0+\delta\mu$, and subjected to an applied strain $\bbeps$ at infinity. It is well known, see \cite{Eshelby,Torquato}, that the strain field solution $\beps(\bx)$ is uniform within the inclusion, i.e. $\beps(\bx)=\beps$ for $|\bx|<R$ with
\begin{equation}\label{sol:sphere}
\beps=\big[ (1-\kappa_s)\bJ+(1-\mu_s)\bK \big]:\bbeps,
\end{equation}
where 
\begin{equation}\label{def:par:sphere}
\kappa_s=\frac{\delta\kappa}{\kappa_1+\frac{2(d-1)}{d}\mu_0}, \qquad \mu_s=\frac{\delta\mu}{\mu_1+\theta_0}, \qquad \theta_0=\mu_0\frac{d^2\kappa_0+2(d+1)(d-2)\mu_0}{2d(\kappa_0+2\mu_0)}.
\end{equation}
Upon prescribing a purely spherical macroscopic strain $\bbeps$ that satisfies $\bJ\dc\bbeps=\bbeps$ and $\bK\dc\bbeps=\bm{0}$, then one obtains that $\dev[\beps]=\bm{0}$ within $S_R$. Moreover expanding Eqn. \eqref{sol:sphere} at the first order entails
\begin{equation}\label{sphere:sph}
\kappa_1 =  \kappa_0+\frac{d\kappa_0+2(d-1)\mu_0}{d}\bigg[1- \frac{\tr[\beps]\tr[\bbeps]}{d\|\bbeps\|^2}\bigg] + o(|\delta\kappa|).
\end{equation}
Likewise, when the applied strain is purely deviatoric, i.e. $\bK\dc\bbeps=\bbeps$ and $\bJ\dc\bbeps=\bm{0}$, then the strain field satisfies $\tr[\beps]=0$ in $S_R$ and one has
\begin{equation}\label{sphere:dev}
\mu_1=\mu_0+\frac{(d+2) \mu_0 \big( d\kappa_0+2(d-1)\mu_0 \big)}{2d(\kappa_0+2\mu_0)}\bigg[1- \frac{\dev[\beps]\dc\dev[\bbeps]}{\|\bbeps\|^2}\bigg] + o(|\delta\mu|)
\end{equation}
Therefore, owing to the definitions of $\nJ$, $\nK$ in \eqref{id:qc} and of $\lJ^{-1}$, $\lK^{-1}$ in Section \ref{sec:strain:approach}, one can conclude that the identities \eqref{sphere:sph} and \eqref{sphere:dev} coincides with the results of propositions \ref{prop:sph:rela} and \ref{prop:dev:rela} given that only one strain field is sufficient in each case to derive the sought strain-modulus identities. Consistently with the developments of Section \ref{sec:energy:approach}, this result relies on the Proposition \ref{prop:geom:isot} which applies to the isotropic geometry of the spherical inclusion problem.

Finally, note that these results can be extended to the case of a suspension of spherical elastic inclusions provided that mutual interactions can be neglected, i.e. as long as the strain field can be considered to be homogeneous within each spheres and given by Eqn. \eqref{sol:sphere}.

\subsection{Macroscopic isotropy and Hashin-Shtrikman bounds}

A second analytical example illustrating the results of Proposition \ref{prop:geom:isot} is obtained by considering a two-phase macroscopically isotropic microstructure, that is not intended to be described here, but for which one of the Hashin-Shtrikman bounds is attained \cite{Hashin,Milton}. Let consider weak-contrast phases $\phi=1$, $2$ such that for $\eta_\phi=\kappa_\phi$ or $\eta_\phi=\mu_\phi$ one has
\begin{equation}\label{ph:weak}
\eta_2 = \eta_0 + f_1 \delta\eta \qquad \text{and}\qquad \eta_1 = \eta_0 - f_2 \delta\eta
\end{equation}
with $\delta\eta=o(\eta_0)$. Since $f_1+f_2=1$, this definition ensures that $\langle\eta\rangle=f_1\eta_1+f_2\eta_2=\eta_0$ and $\eta_2-\eta_1=\delta\eta$ so that the corresponding elasticity tensor of the form \eqref{Ela:comp} satisfies \eqref{def:elas:weak}. According to the expressions of the Hashin-Shtrikman bounds then one further assumes that the isotropic effective elasticity tensor $\Lmh=d \tilde{\kappa} \bJ + 2\tilde{\mu}\bK$ is given by
\begin{equation}\label{HS}
\tilde{\kappa} = \langle\kappa\rangle - \frac{f_1f_2(\kappa_1-\kappa_2)^2}{{\langle\kappa\rangle}_{\!*} + 2\frac{(d-1)}{d}\mu_1} \qquad \text{and} \qquad
\tilde{\mu} = \langle\mu\rangle - \frac{f_1f_2(\mu_1-\mu_2)^2}{{\langle\mu\rangle}_{\!*} + \theta_1}
\end{equation}
where ${\langle\eta\rangle}_{\!*}=f_2\eta_1+f_1\eta_2$ and $\theta_1$ defined as in Eqn. \eqref{def:par:sphere} with the reference moduli $\kappa_0$, $\mu_0$ replaced by these of phase $1$. Based on Equation \eqref{derv:modulus} and choosing either $t=\kappa_1$ or $t=\mu_1$ one obtains 
\begin{equation}\label{derv:en:isot:rela}\left\{\begin{aligned}
& f_1 {\langle\epsz^2\rangle}_1=  \bepsz^2 \frac{\p\tilde{\kappa}}{\p \kappa_1} + \frac{2}{d(d-1)}\bepse^2\frac{\p\tilde{\mu}}{\p\kappa_1}\\
& f_1  {\langle\epse^2\rangle}_1 = \frac{d(d-1)}{2} \bepsz^2 \frac{\p\tilde{\kappa}}{\p \mu_1} + \bepse^2\frac{\p\tilde{\mu}}{\p\mu_1} .
\end{aligned}\right.\end{equation}
Using Eqns. \eqref{ph:weak} and \eqref{HS}, the material derivatives entering \eqref{derv:en:isot:rela} is expanded at the first order as
\[
 \frac{\p\tilde{\kappa}}{\p \kappa_1} =  f_1 + \frac{2f_1f_2 \, \delta\kappa}{\kappa_0 + 2\frac{(d-1)}{d}\mu_0} + o(\|\delta\Lm\|), \qquad \frac{\p\tilde{\kappa}}{\p \mu_1}=o(\|\delta\Lm\|).
\]
and 
\[
\frac{\p\tilde{\mu}}{\p \kappa_1}=o(\|\delta\Lm\|), \qquad \frac{\p\tilde{\mu}}{\p \mu_1}  = f_1 + \frac{2f_1f_2 \, \delta\mu}{\mu_0 + \theta_0} + o(\|\delta\Lm\|).
\]
where $\theta_0$ is defined as in Eqn. \eqref{def:par:sphere}. 

Therefore, when the macroscopic strain is purely spherical, i.e. $\bepse=0$, then according to \eqref{derv:en:isot:rela} one obtains ${\langle\epse^2\rangle}_1=o(\|\delta\Lm\|)$ and
\[
 \frac{\langle\epsz^2\rangle_1}{\bepsz^2} = 1 + \frac{2f_2 \, \delta\kappa}{\kappa_0 + 2\frac{(d-1)}{d}\mu_0} + o(\|\delta\Lm\|),
\]
which, using \eqref{ph:weak}, finally yields
\begin{equation}\label{ph:sph:rela}
\kappa_1=\kappa_0+\frac{d\kappa_0+2(d-1)\mu_0}{2d}\bigg[1- \frac{\langle\epsz^2\rangle_1}{\bepsz^2}\bigg] + o(\|\delta\Lm\|).
\end{equation}
Conversely, if the macroscopic strain satisfies $\bepsz=0$, then Eqns. \eqref{derv:en:isot:rela} yields ${\langle\epsz^2\rangle}_1=o(\|\delta\Lm\|)$ and
\begin{equation}\label{ph:dev:rela}
\mu_1=\mu_0+\frac{(d+2) \mu_0 \big( d\kappa_0+2(d-1)\mu_0 \big)}{4d(\kappa_0+2\mu_0)}\bigg[1- \frac{\langle\epse^2\rangle_1}{\bepse^2}\bigg] + o(\|\delta\Lm\|) .
\end{equation}
According to the definitions of $\nJ$, $\nK$ in \eqref{id:qc} as well as of $\lJ^{-1}$, $\lK^{-1}$ in propositions \ref{prop:sph:rela} and \ref{prop:dev:rela}, then one can conclude that the identities \eqref{ph:sph:rela} and \eqref{ph:dev:rela} coincides with the results of Proposition \ref{prop:geom:isot}.

\section{Numerical examples}\label{sec:numerical:ex}

In this section we present a set of numerical results for 2D material configurations relative to the bounded domain $\mathcal{V}=[0,1]\times[0,1]$. The domain is meshed based on a regular grid of $500\times500$ nodes which defines a structured mesh of triangular elements. In particular, the mesh considered is independent of the elasticity parameter distributions that are discussed subsequently. Reference material parameter maps $\kappa_\text{ref}(\bx)$, $\mu_\text{ref}(\bx)$ are then generated as uniform random distributions with mean values $\kappa_0=\mu_0=1$ and amplitude parametrized by the contrast value $c$. Configuration 1 in Fig. \ref{fig:mat:parA} corresponds to a smooth and geometrically anisotropic material distribution. Configuration 2 shown in Figure \ref{fig:mat:par} is defined as an arrangement of piecewise homogenous phases obtained by Voronoi tesselation. 

\begin{figure}[hbt]	
\centering
\subfloat[$\kappa_\text{ref}(\bx)-\kappa_0$]{\includegraphics[height=0.21\textheight]{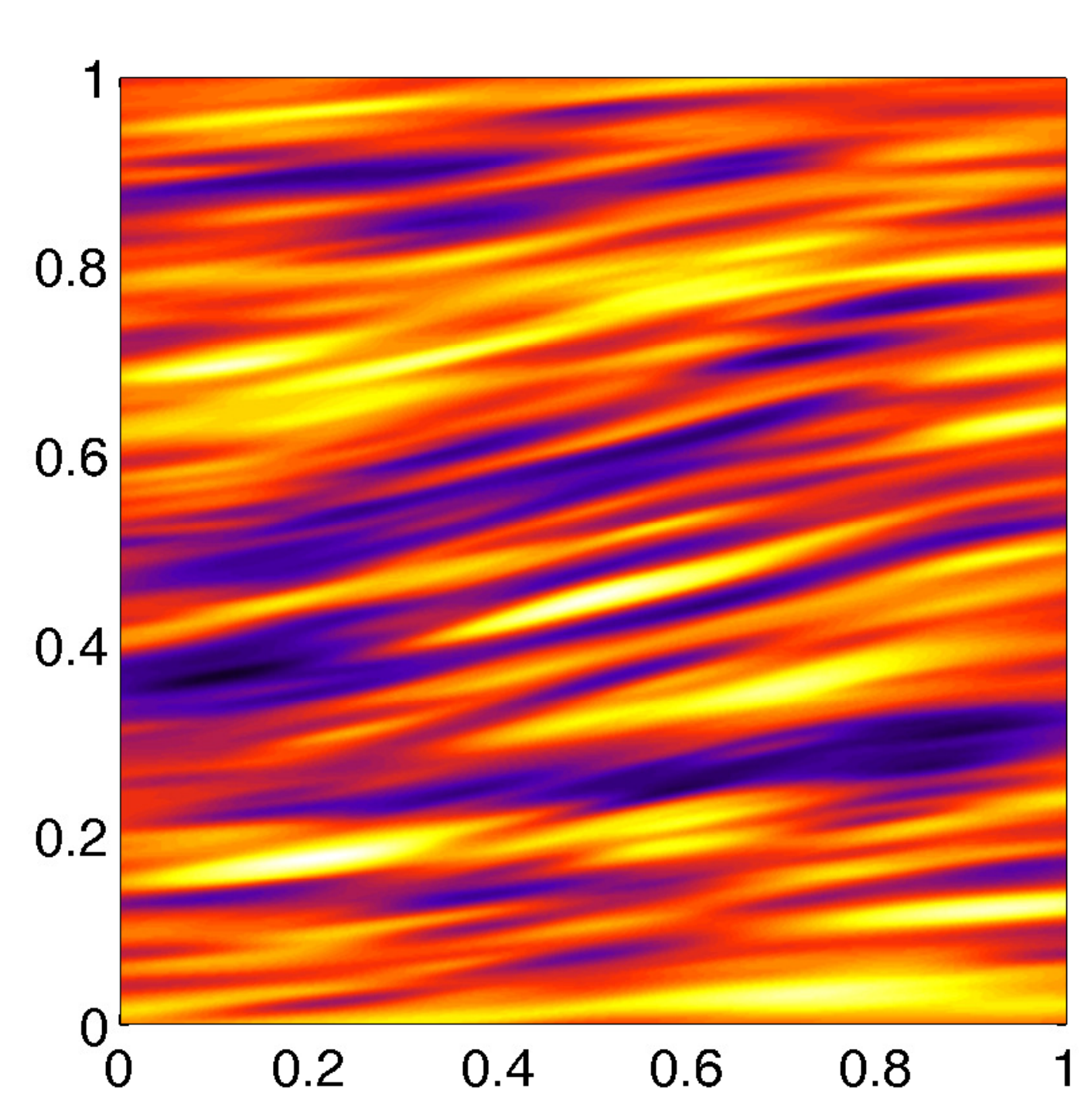}\label{kappadataA}}\hspace{5mm}
\subfloat[$\mu_\text{ref}(\bx)-\mu_0$]{\includegraphics[height=0.21\textheight]{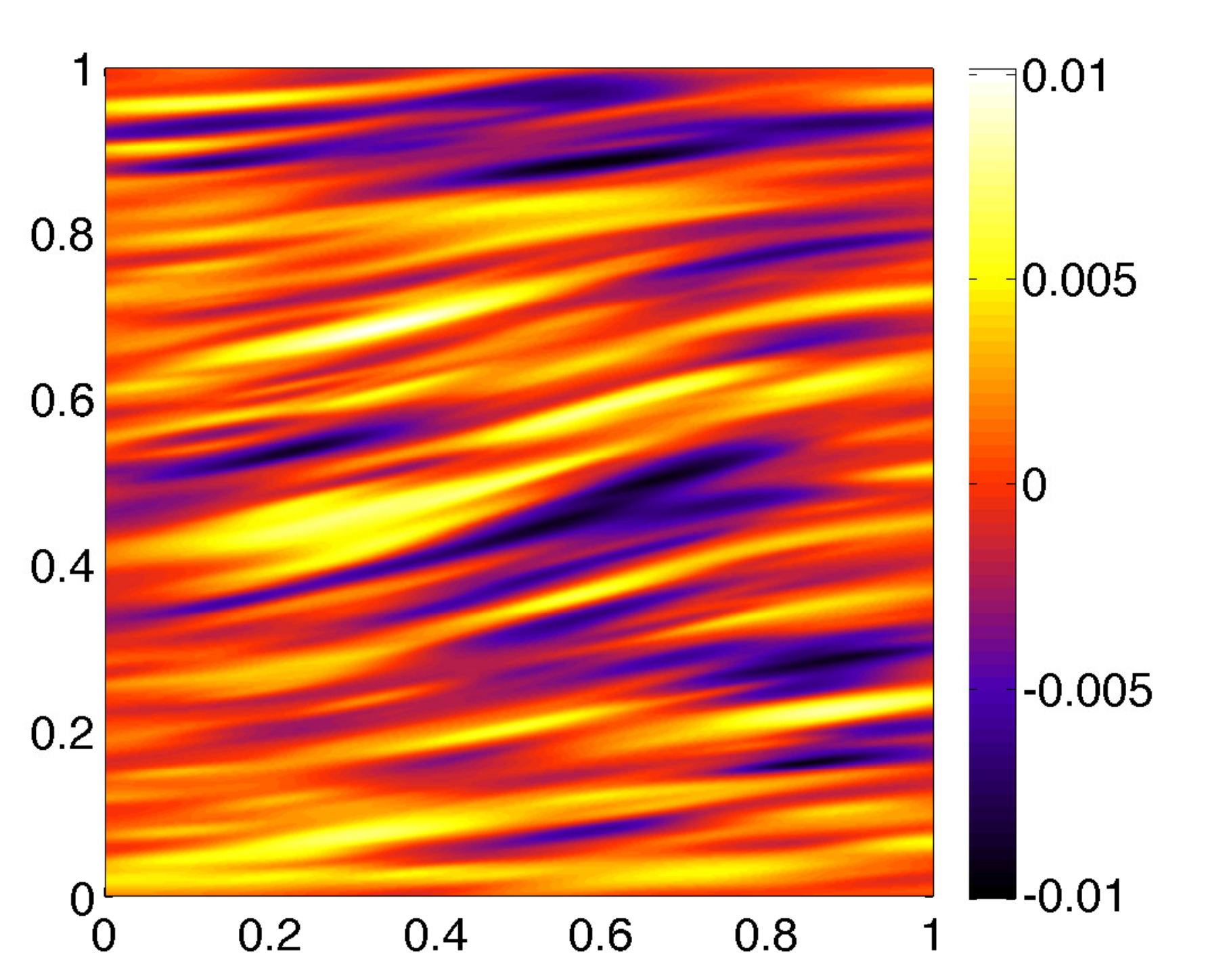}\label{mudataA}}
\caption{\emph{Configuration 1:} Reference elasticity parameter distributions with $c=10^{-2}$.}
\label{fig:mat:parA}
\end{figure}

\begin{figure}[thb]	
\centering
\subfloat[$\kappa_\text{ref}(\bx)-\kappa_0$]{\includegraphics[height=0.21\textheight]{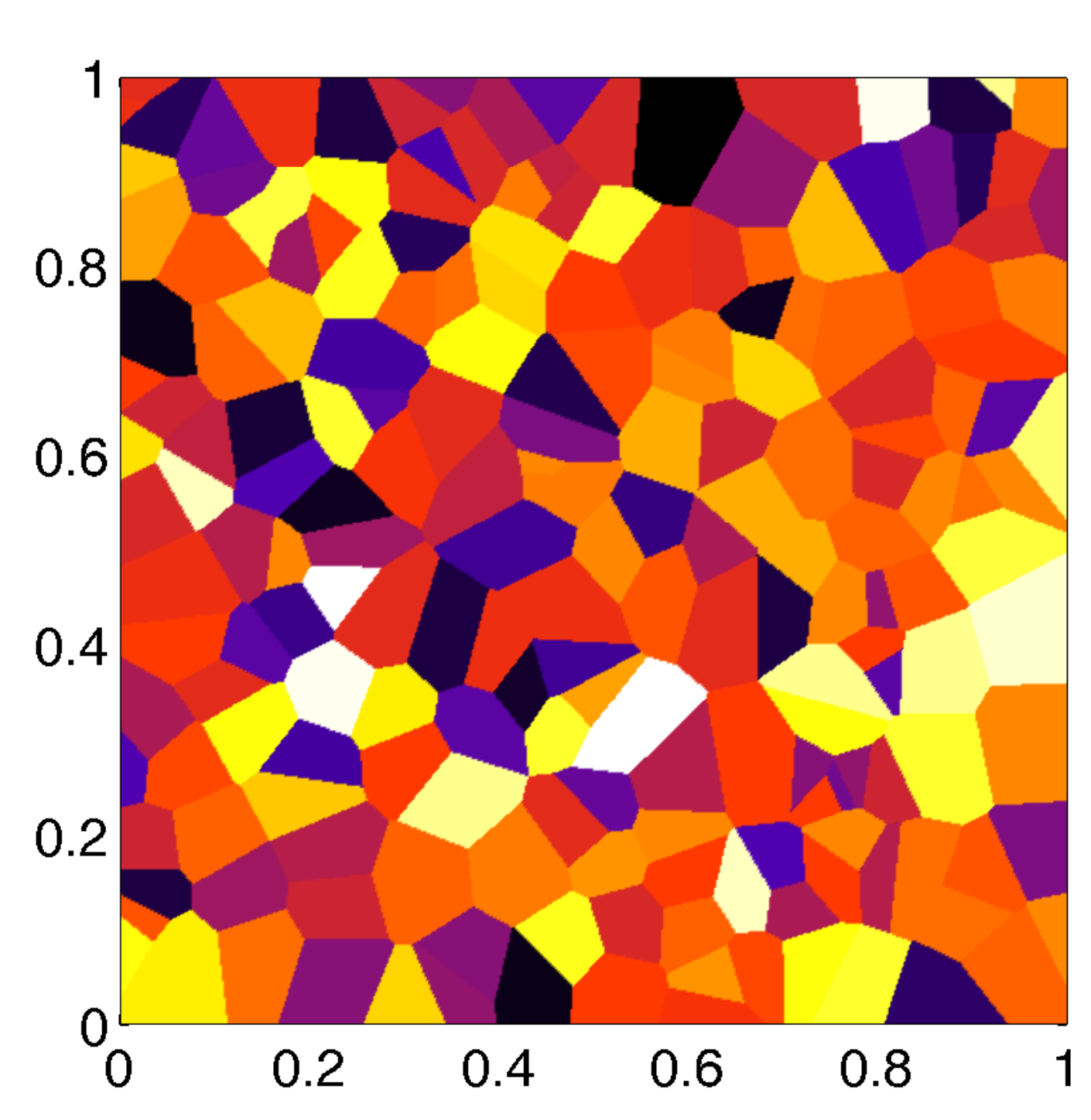}\label{kappadata}}\hspace{5mm}
\subfloat[$\mu_\text{ref}(\bx)-\mu_0$]{\includegraphics[height=0.21\textheight]{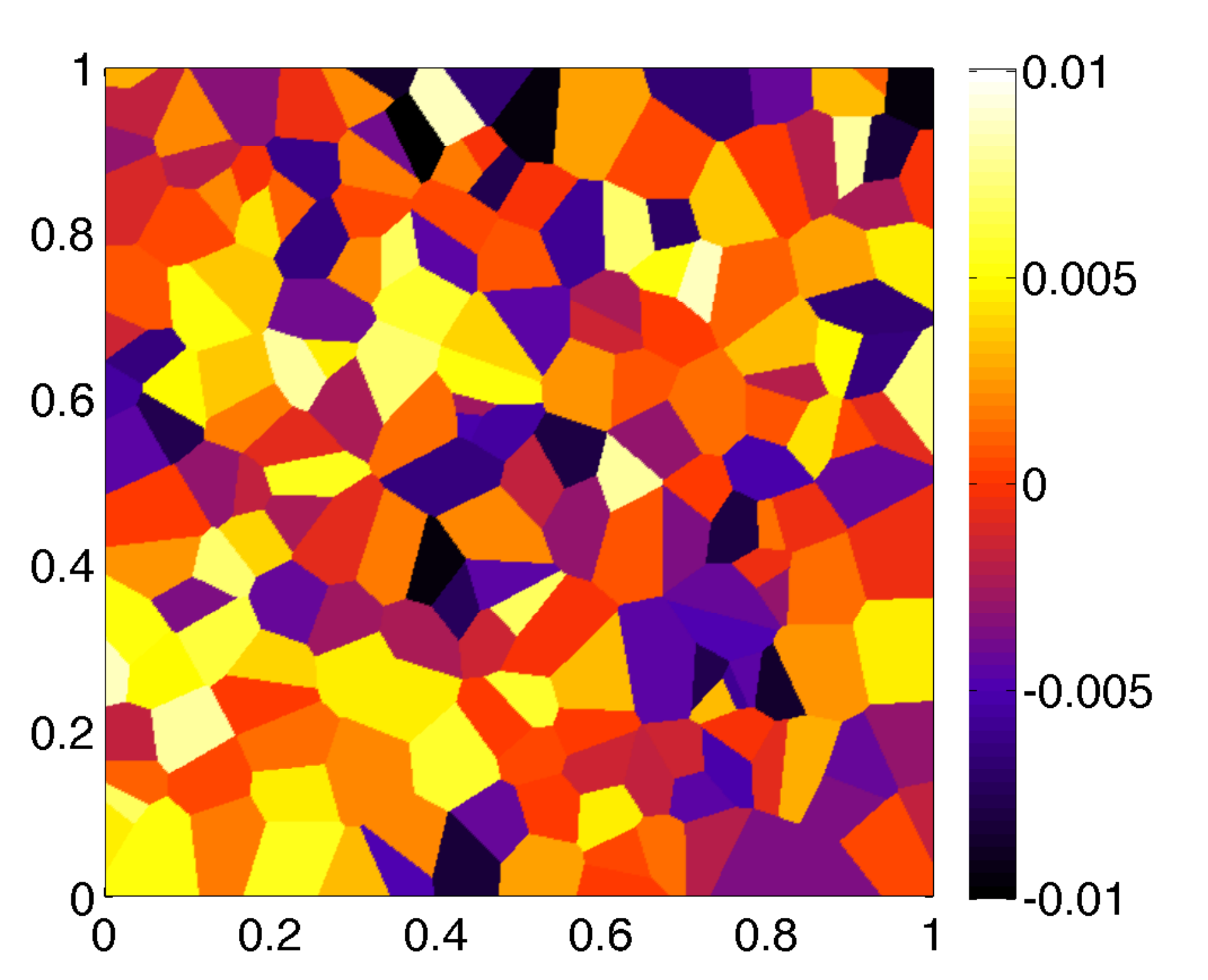}\label{mudata}}
\caption{\emph{Configuration 2:} Reference elasticity parameter distributions with $c=10^{-2}$.}
\label{fig:mat:par}
\end{figure}

According to the previous developments then in 2D the knowledge of three strain field solutions is required to be able to reconstruct the reference elasticity parameter fields $\kappa_\text{ref}(\bx)$, $\mu_\text{ref}(\bx)$. These solutions denoted as $\beps^{(i)}(\bx)$ for $i\in\{1,\,2,\,3\}$ are defined by prescribing linear displacements on the domain boundary $\partial\mathcal{V}$ as $\bm{u}^{(i)}(\bx)=\bbeps^{(i)}\cdot\bx$ with
\[\bbeps^{(1)}=\bm{e}_1\otimes\bm{e}_1+\bm{e}_2\otimes\bm{e}_2,\quad
\bbeps^{(2)}=\bm{e}_1\otimes\bm{e}_2+\bm{e}_2\otimes\bm{e}_1, \quad
 \bbeps^{(3)}=\bm{e}_1\otimes\bm{e}_1-\bm{e}_2\otimes\bm{e}_2.
\]
These macroscopic strains are chosen so as to satisfy $\bK\dc\bbeps^{(1)}=\bm{0}$ and $\bJ\dc\bbeps^{(2)}=\bJ\dc\bbeps^{(3)}=~\bm{0}$.

Displacement solutions $\bm{u}^{(i)}$ and associated strain fields $\beps^{(i)}$ are computed for the chosen reference configurations using standard linear finite elements. Then the fields $\kappa_\text{ref}(\bx)$, $\mu_\text{ref}(\bx)$ are considered to be unknown and the strain field values $\beps^{(i)}(\bx)$ computed at each node constitute in turn the full-field data set for the inverse problem. The objective elastic moduli are reconstructed by converting the strain maps that are associated with these synthetic measurements based on the formulae derived in propositions \ref{prop:sph:rela}--\ref{prop:geom:isot}. Therefore, the elasticity maps are computed using only the information available from the finite element procedure that is performed on a structured mesh. We denote as $\kappa^{(j)}(\bx)$, $\mu^{(j)}(\bx)$ the reconstructed elasticity maps, with the superscript $(j)$ indicating the strain field data that is used for the computation. Color scales are adjusted to enable comparison between figures.

\subsection{Reconstruction examples}

\begin{figure}[bht]	
\centering
\subfloat[$\kappa^{(1)}(\bx)-\kappa_0$]{\includegraphics[height=0.21\textheight]{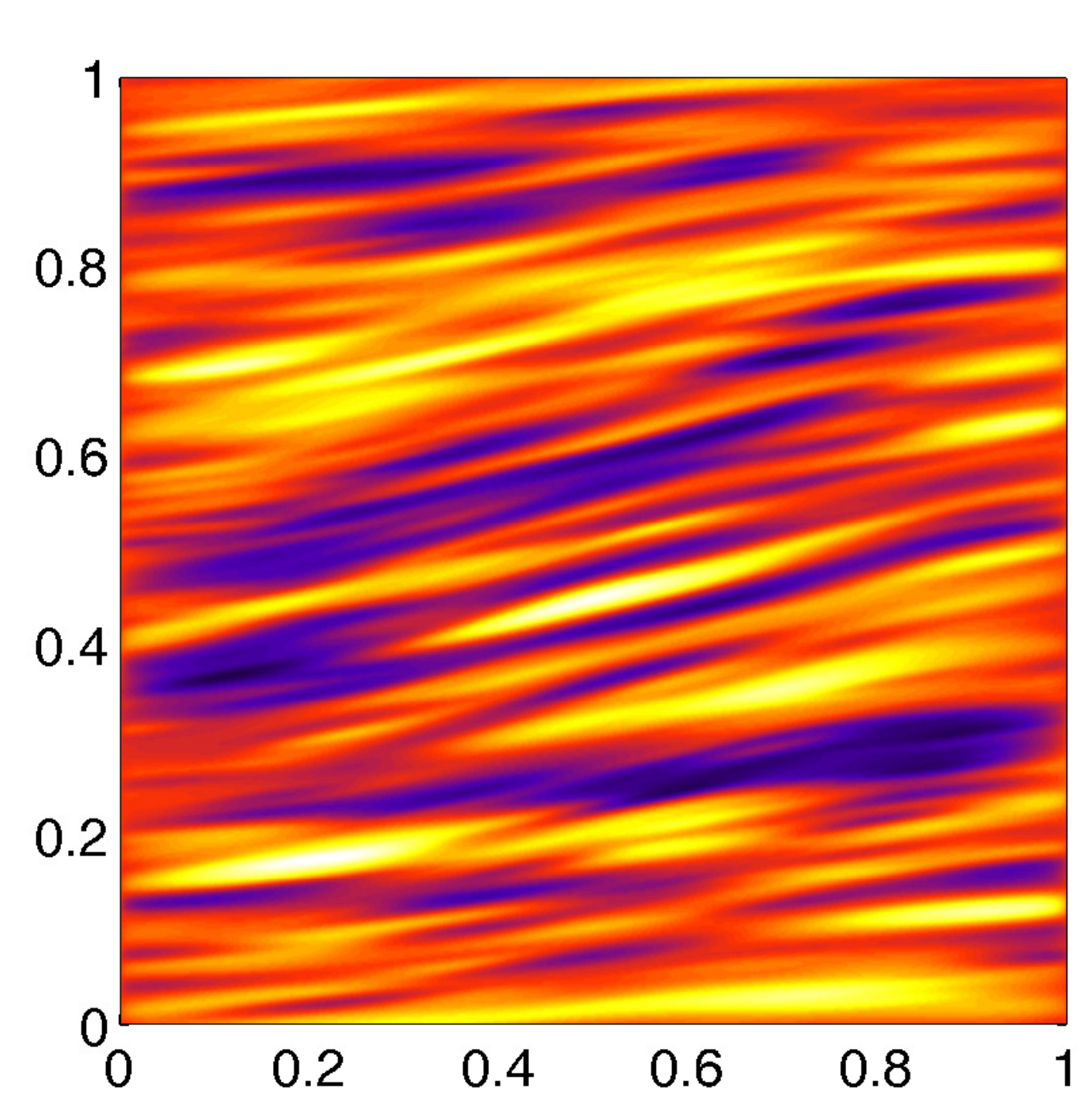}\label{kappaIdA}}\hspace{1mm}
\subfloat[$\mu^{(2)}(\bx)-\mu_0$]{\includegraphics[height=0.21\textheight]{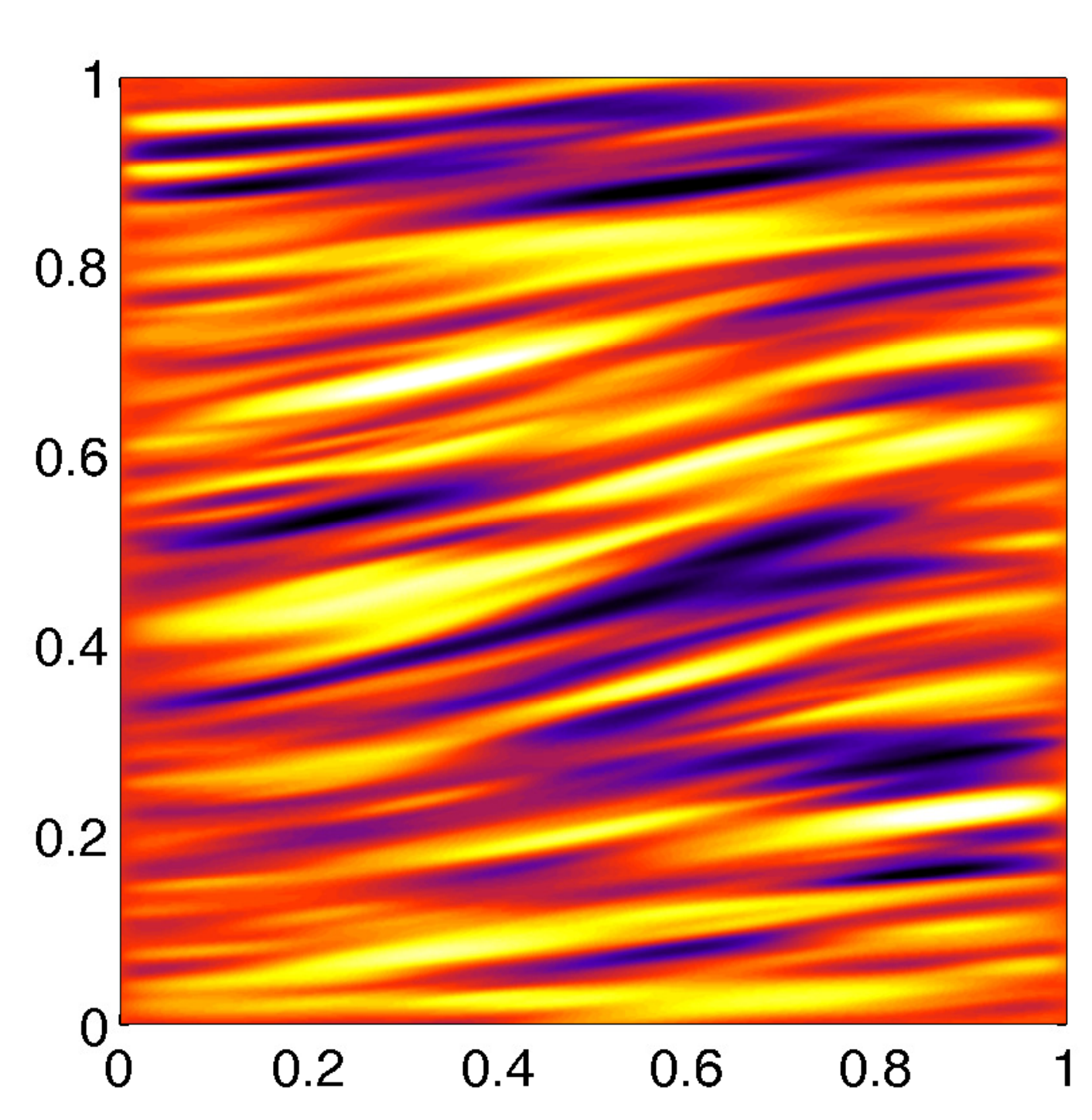}\label{muId2A}}\hspace{1mm}
\subfloat[$\mu^{(2,3)}(\bx)-\mu_0$]{\includegraphics[height=0.21\textheight]{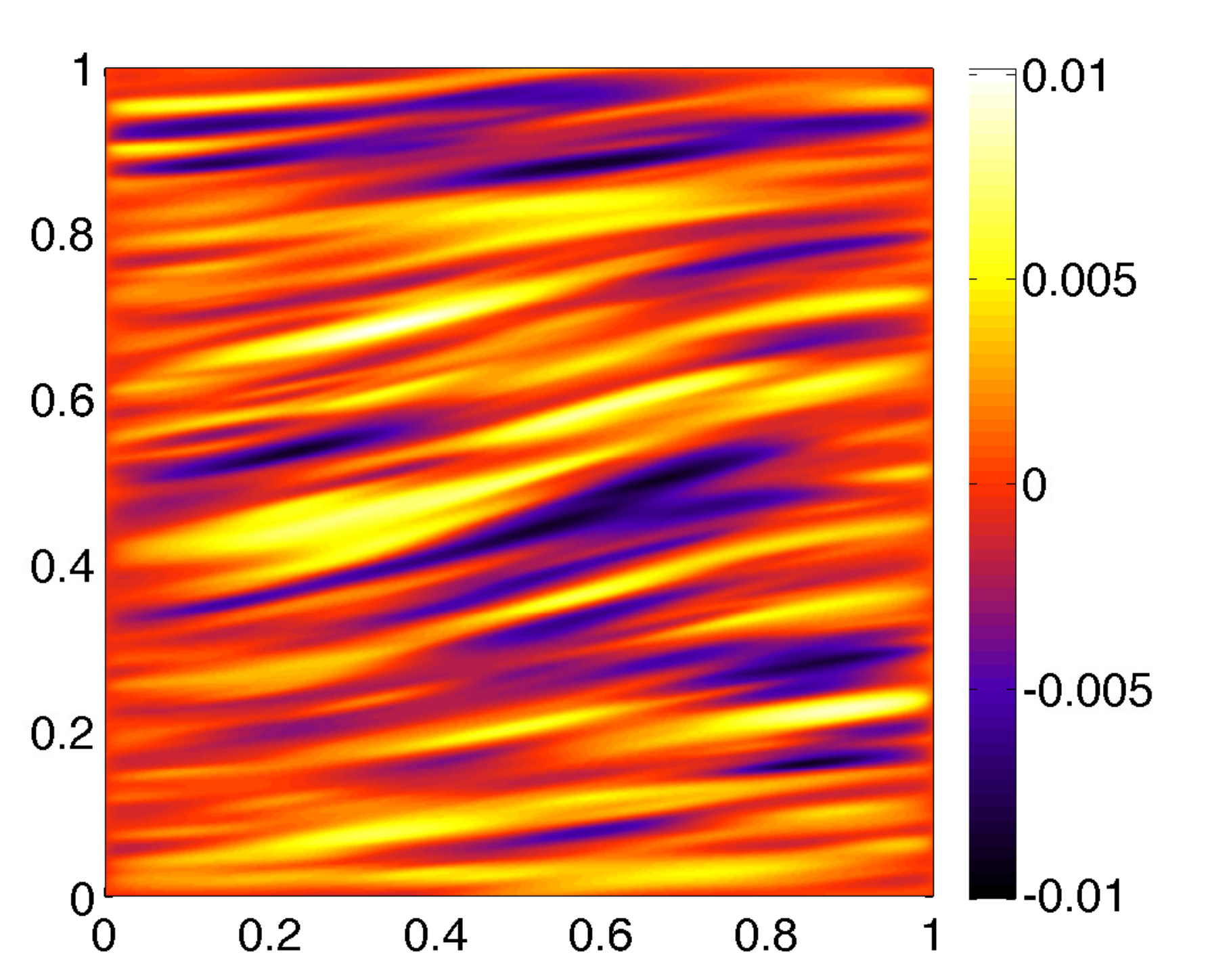}\label{muIdA}}
\caption{\emph{Configuration 1:} Reconstructed elasticity maps in the case where $c=10^{-2}$.}
\end{figure}

\begin{figure}[thb]	
\centering
\subfloat[$\displaystyle\frac{1}{c}|\kappa_\text{ref}(\bx)-\kappa^{(1)}(\bx)|$]{\includegraphics[height=0.21\textheight]{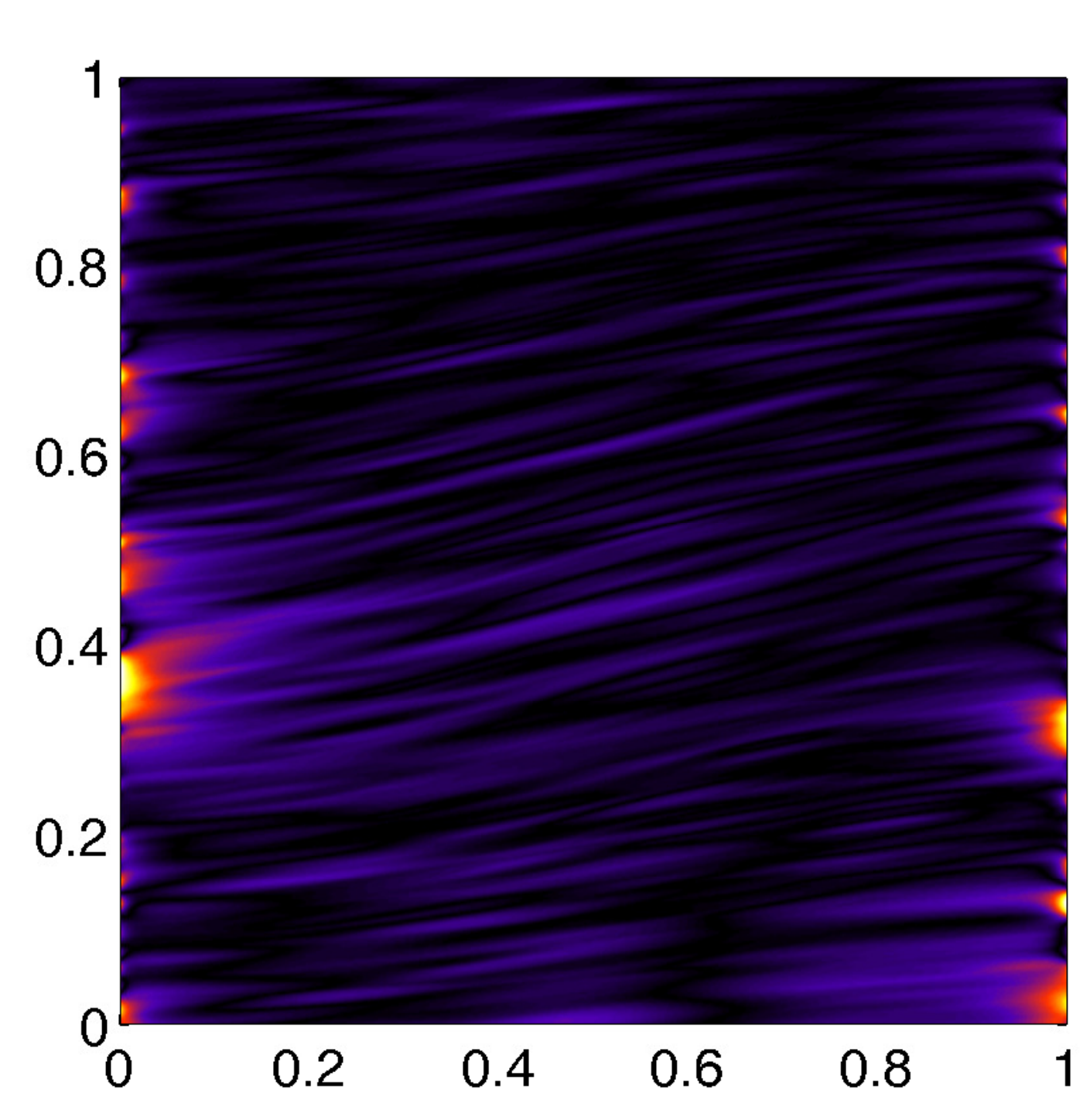}\label{ErkA}}\hspace{1mm}
\subfloat[$\displaystyle\frac{1}{c}|\mu_\text{ref}(\bx)-\mu^{(2)}(\bx)|$]{\includegraphics[height=0.21\textheight]{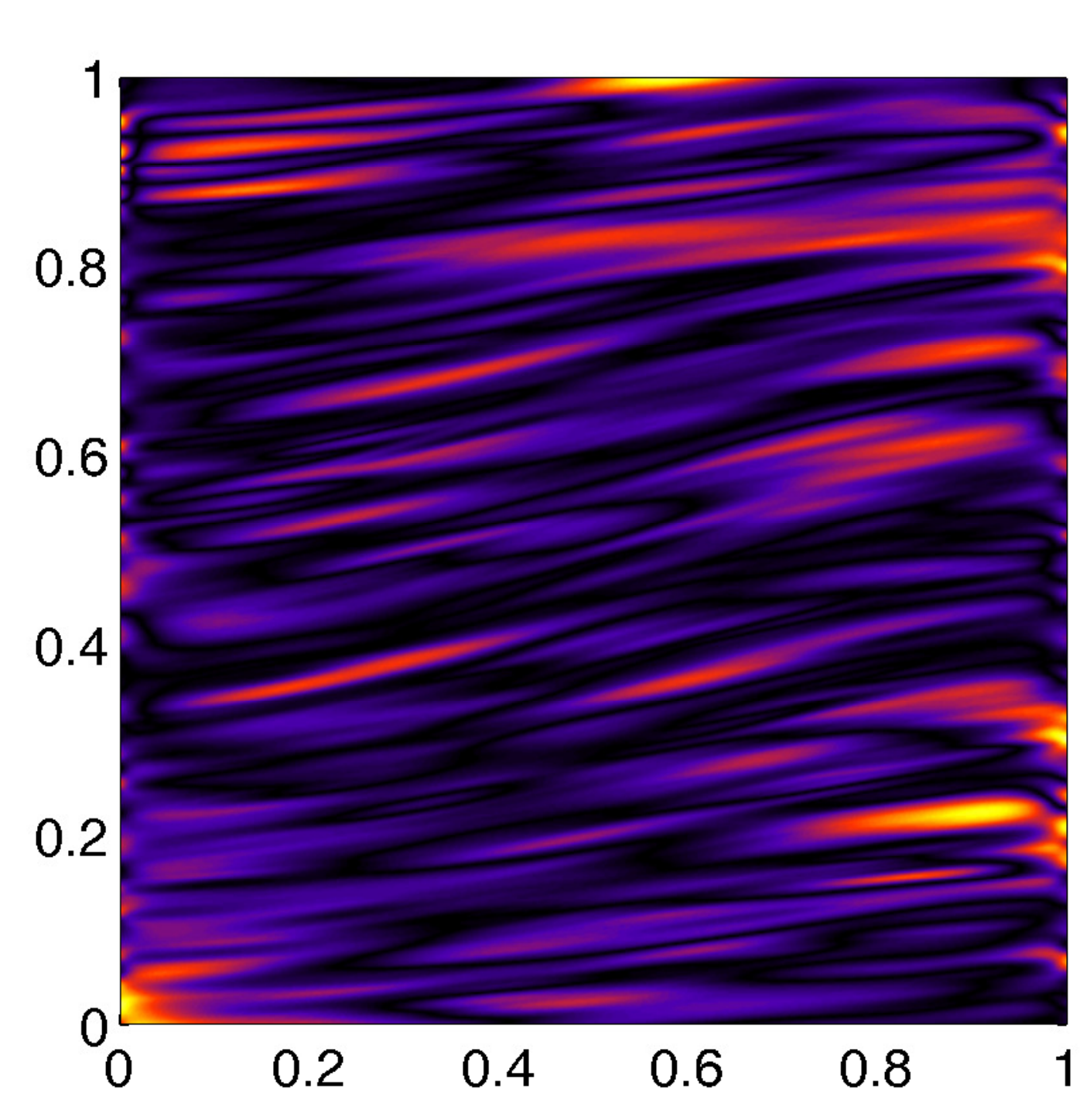}\label{Erm2A}}\hspace{1mm}
\subfloat[$\displaystyle\frac{1}{c}|\mu_\text{ref}(\bx)-\mu^{(2,3)}(\bx)|$]{\includegraphics[height=0.21\textheight]{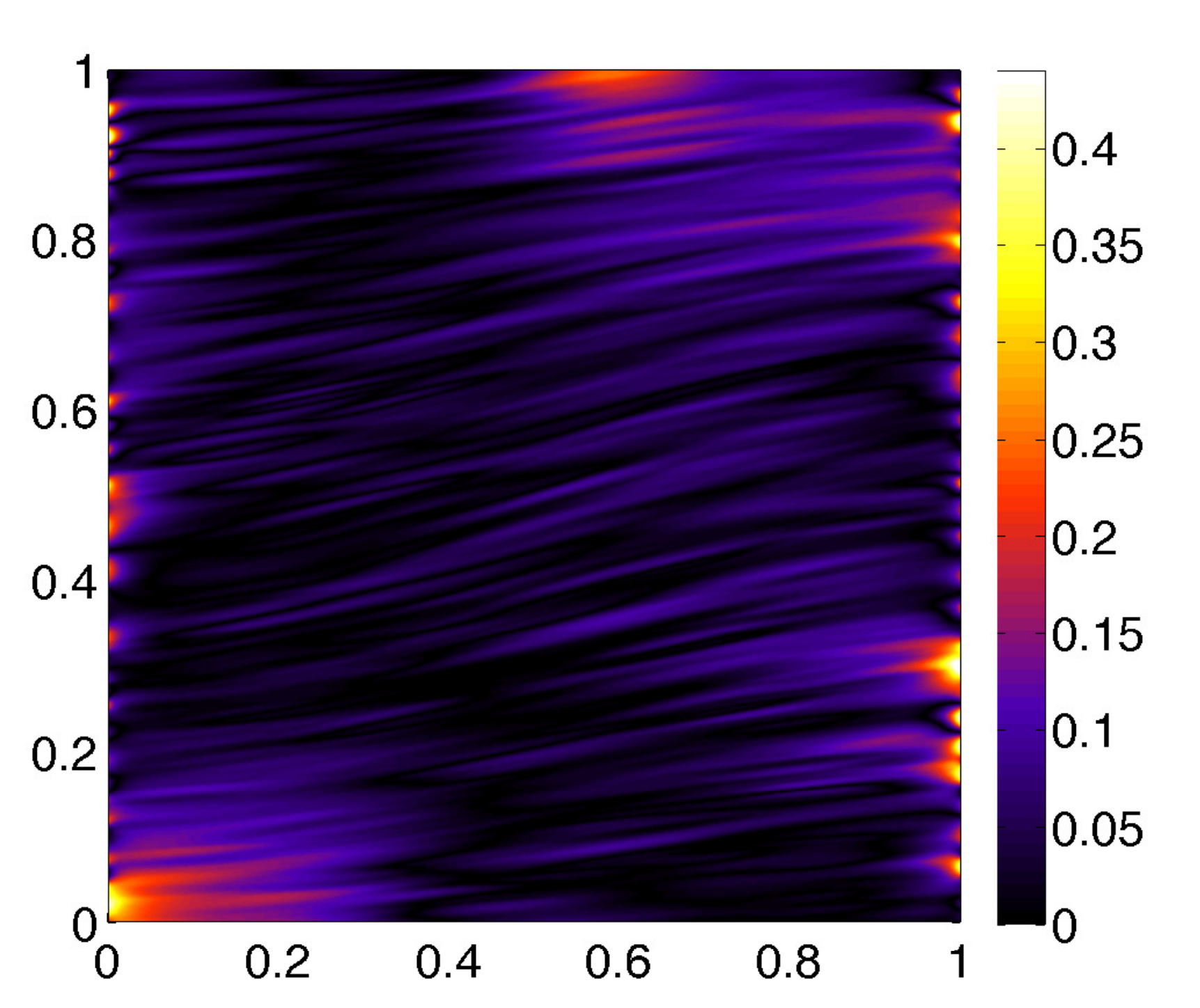}\label{ErmA}}
\caption{\emph{Configuration 1:} Normalized maps of error on the reconstruction relatively to the contrast parameter $c$ with $c=10^{-2}$.}
\end{figure}

\begin{figure}[thb]	
\centering
\subfloat[$\kappa^{(1)}(\bx)-\kappa_0$]{\includegraphics[height=0.21\textheight]{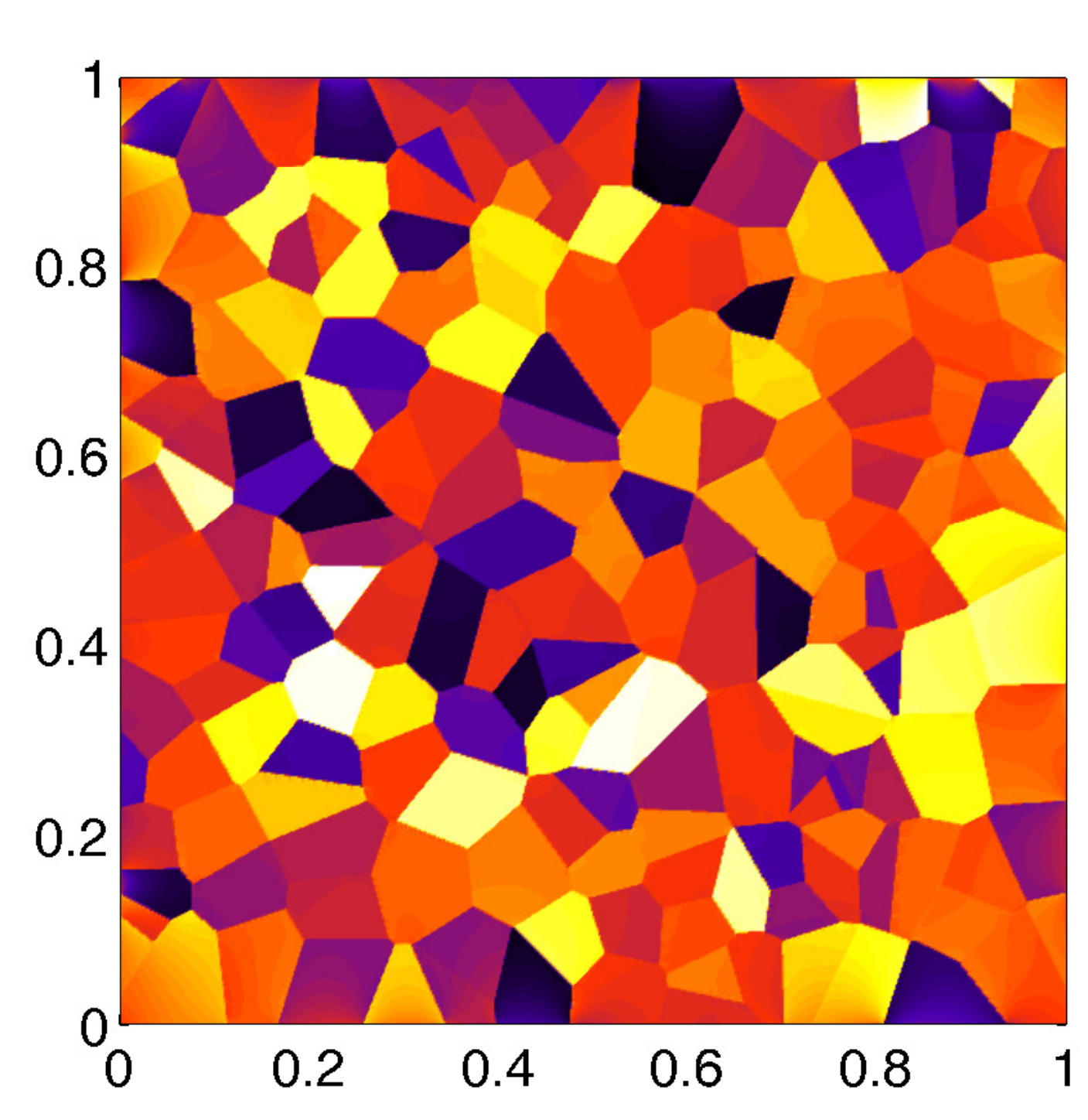}\label{kappaId}}\hspace{1mm}
\subfloat[$\mu^{(2)}(\bx)-\mu_0$]{\includegraphics[height=0.21\textheight]{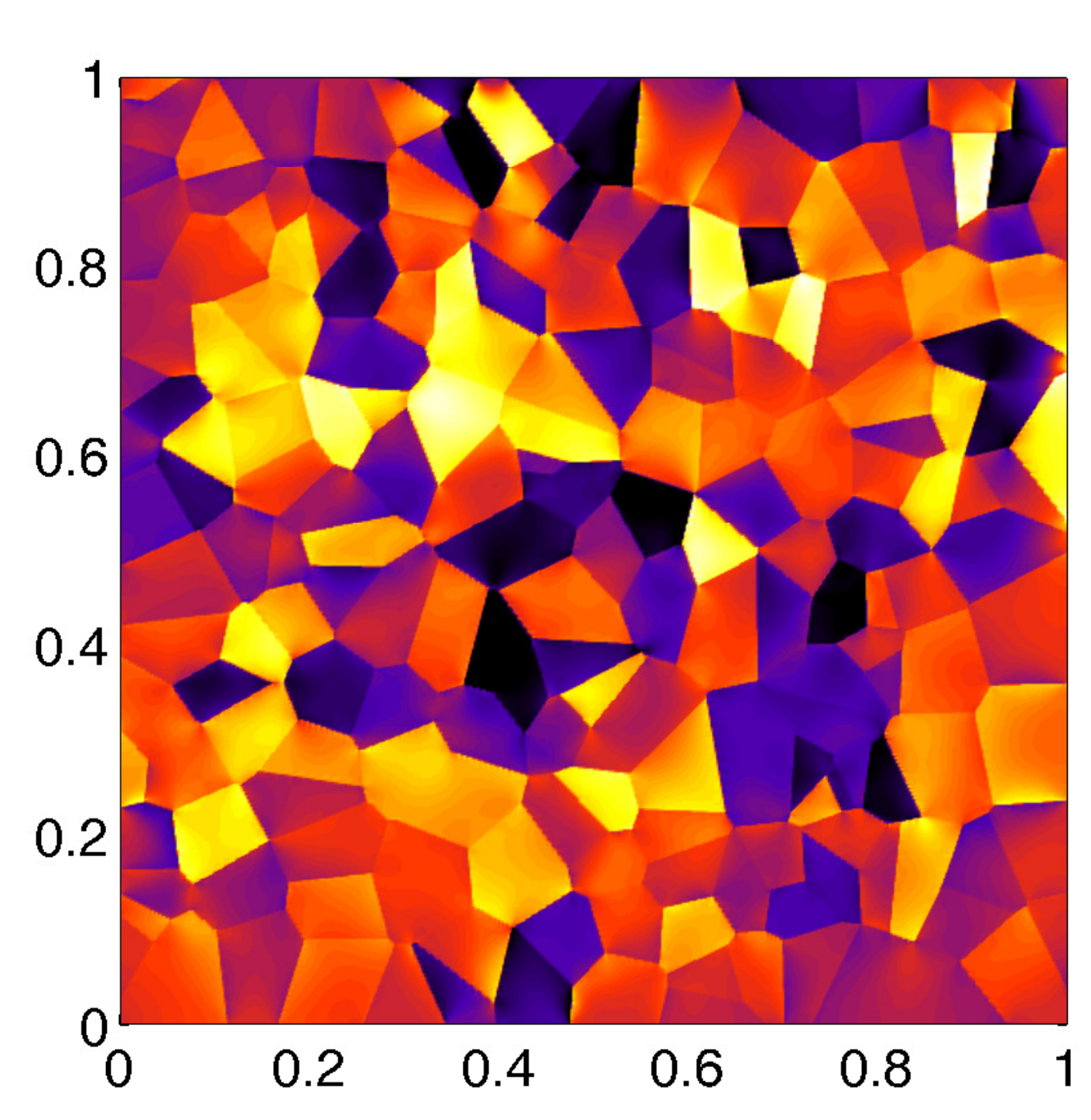}\label{muId2}}\hspace{1mm}
\subfloat[$\mu^{(2,3)}(\bx)-\mu_0$]{\includegraphics[height=0.21\textheight]{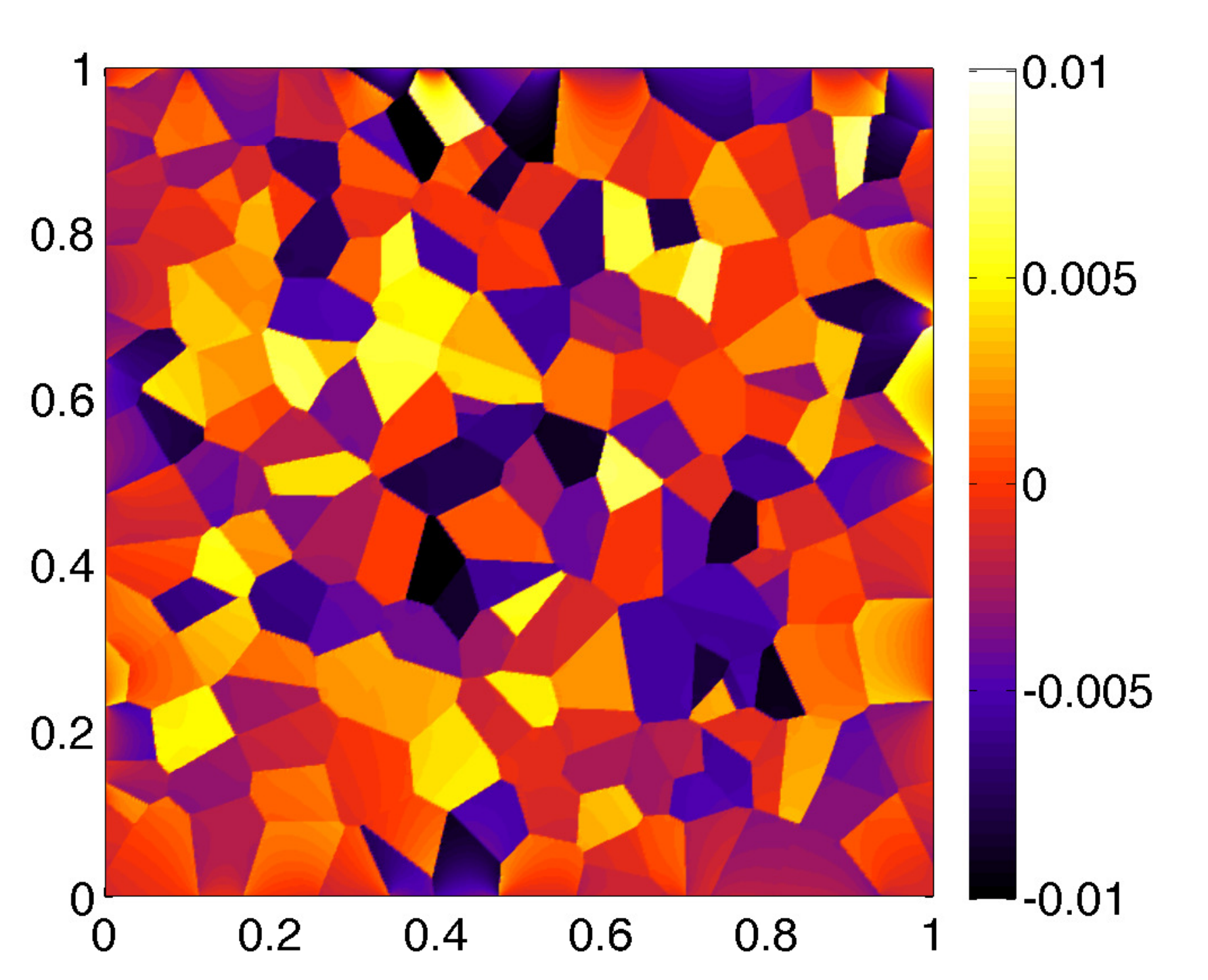}\label{muId}}
\caption{\emph{Configuration 2:} Reconstructed elasticity maps in the case where $c=10^{-2}$.}
\end{figure}

\begin{figure}[htb]	
\centering
\subfloat[$\displaystyle\frac{1}{c}|\kappa_\text{ref}(\bx)-\kappa^{(1)}(\bx)|$]{\includegraphics[height=0.21\textheight]{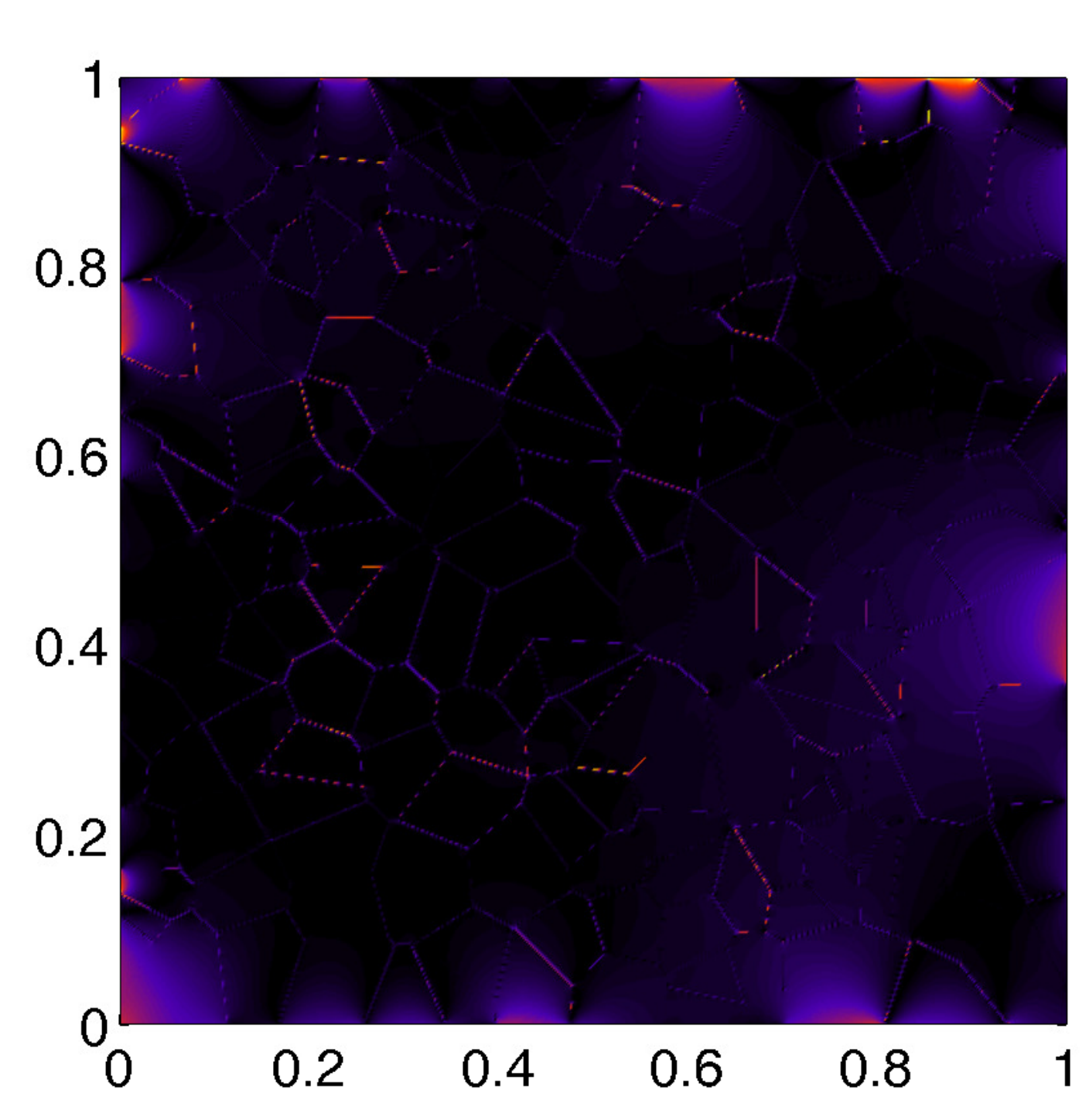}\label{Erk}}\hspace{1mm}
\subfloat[$\displaystyle\frac{1}{c}|\mu_\text{ref}(\bx)-\mu^{(2)}(\bx)|$]{\includegraphics[height=0.21\textheight]{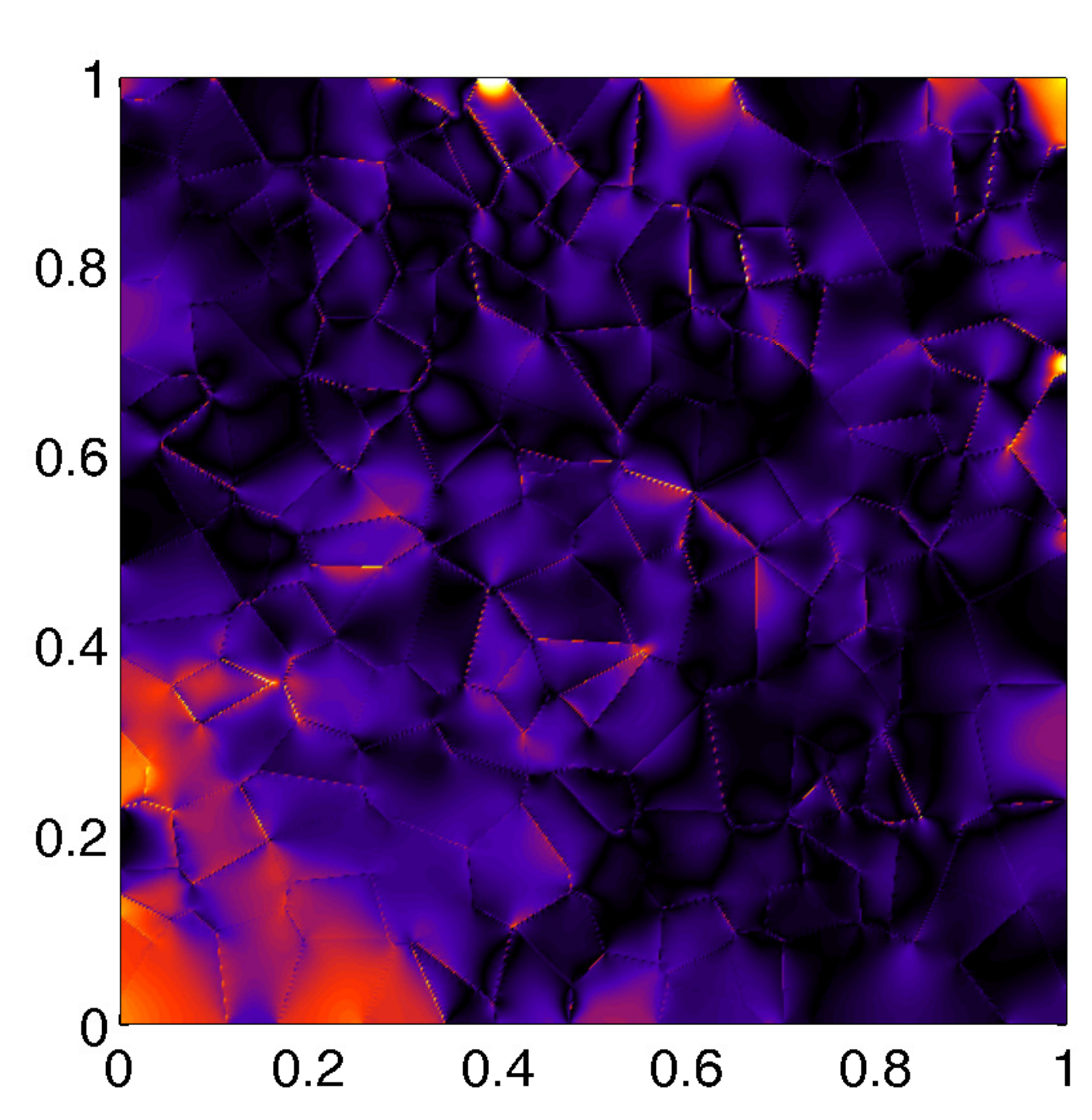}\label{Erm2}}\hspace{1mm}
\subfloat[$\displaystyle\frac{1}{c}|\mu_\text{ref}(\bx)-\mu^{(2,3)}(\bx)|$]{\includegraphics[height=0.21\textheight]{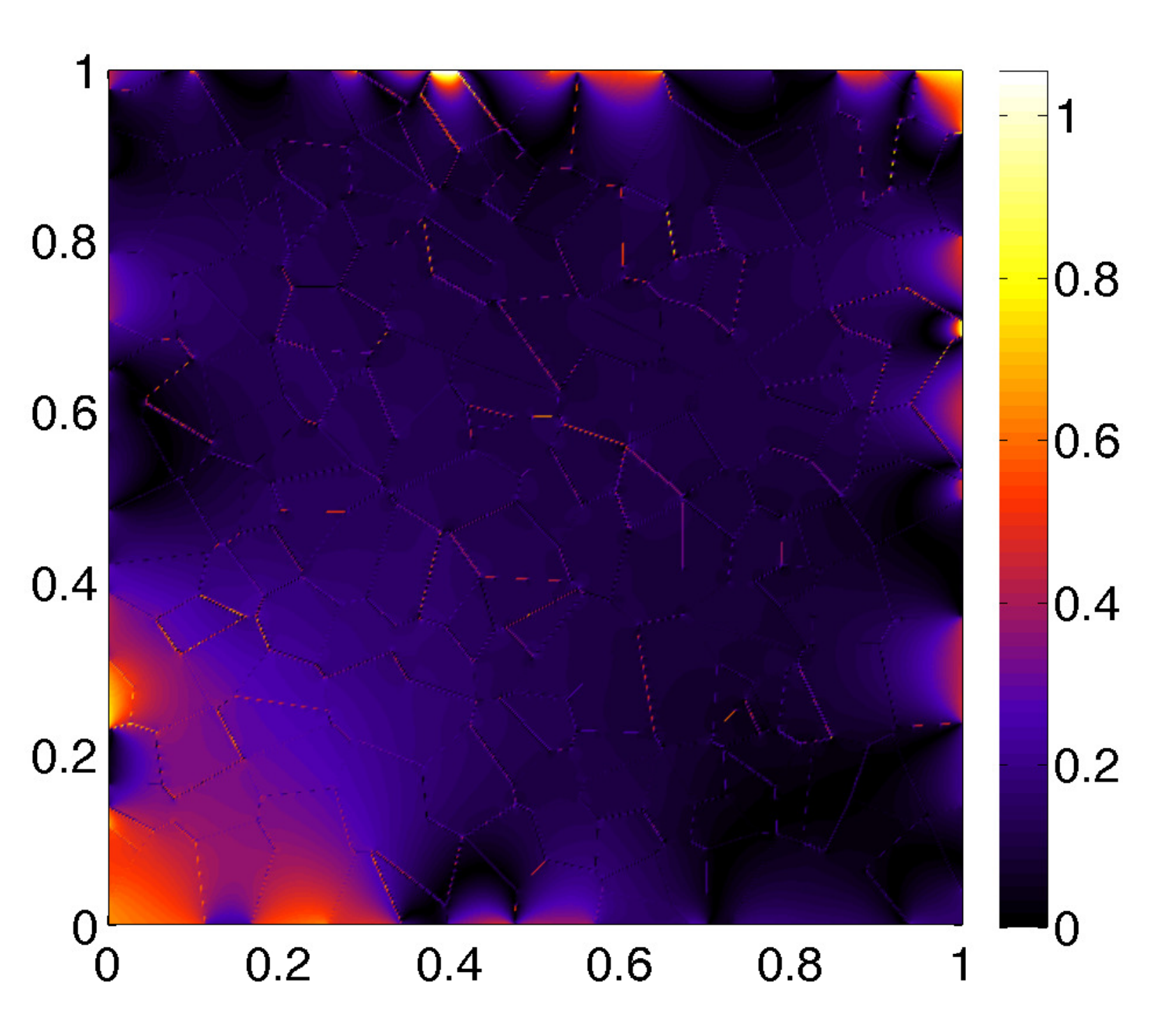}\label{Erm}}
\caption{\emph{Configuration 2:} Normalized maps of error on the reconstruction relatively to the contrast parameter $c$ with $c=10^{-2}$.}
\end{figure}

For the first set of examples, the contrast value is chosen as $c=10^{-2}$. The reconstructed elasticity maps of figures \ref{kappaIdA}, \ref{muIdA} and \ref{kappaId}, \ref{muId} are in good agreements with the reference parameter maps of Fig. \ref{fig:mat:parA} and \ref{fig:mat:par} respectively. The corresponding relative errors (Fig. \ref{ErkA} \ref{ErmA} and \ref{Erk}, \ref{Erm}) are consistent with the first-order identities of propositions \ref{prop:sph:rela} and \ref{prop:dev:rela}. Moreover, as expected from the translation-invariant approximation discussed at Eqn. \eqref{trans:inv:approx}, errors tend to localize at the domain boundary, so that the reconstructions are optimal within the vicinity of the center of the images. Included here for comparison, the reconstruction $\mu^{(2)}$ of the shear modulus in Figs. \ref{muId2A} and \ref{muId2}, and that is incorrectly based on only one deviatoric strain field measurement, yields larger discrepancies with the reference compared to $\mu^{(2,3)}$. The latter is indeed computed using two strain field measurements as required from Proposition \ref{prop:dev:rela}.

\subsection{Influence of contrast amplitude}

Here, we discuss the quality of the reconstructions that are obtained for larger moduli fluctuations. Considering configuration 2, then the relative reconstruction errors on the bulk and shear moduli are depicted on the figures \ref{fig:cont:k} and \ref{fig:cont:m} for contrast parameter values $c=10^{-1}$, $5\cdot10^{-1}$ and $1$. As expected, the accuracy of the first-order based reconstructions degrades as $c$ increases with significant errors observed within the domain when $c=1$. Indeed, neglecting second-order terms in $O\big(\|\delta\Lm\|^2\big)$ yields larger truncation errors. Nevertheless, the reconstruction formulae perform reasonably well for contrast values $c=10^{-1}$ and $c=5\cdot10^{-1}$. This makes the proposed approach appealing in practical applications where measurement sensitivity is critical.

\begin{figure}[thb]	
\centering
\subfloat[$c=10^{-1}$]{\includegraphics[height=0.21\textheight]{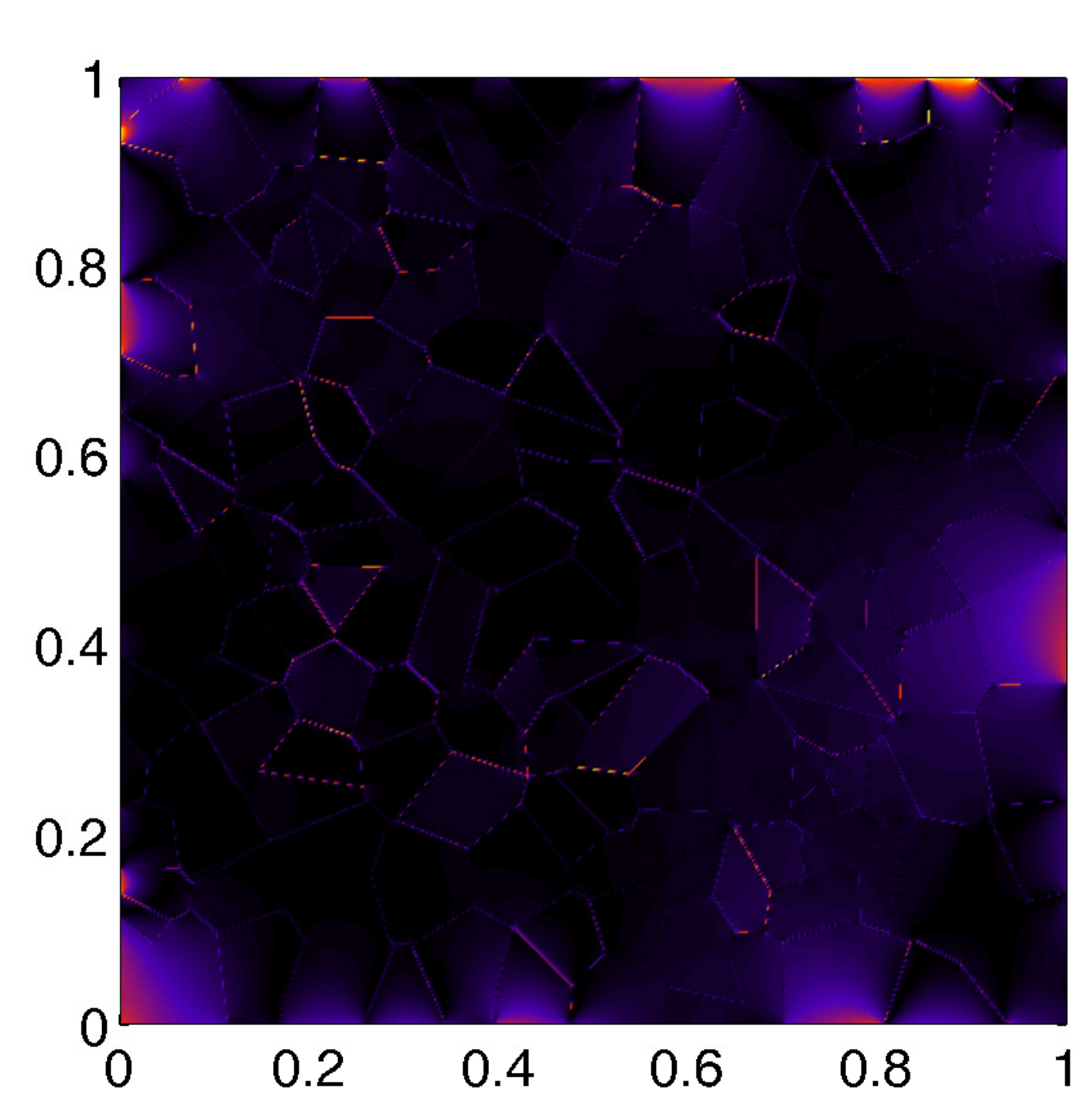}}\hspace{1mm}
\subfloat[$c=5\cdot10^{-1}$]{\includegraphics[height=0.21\textheight]{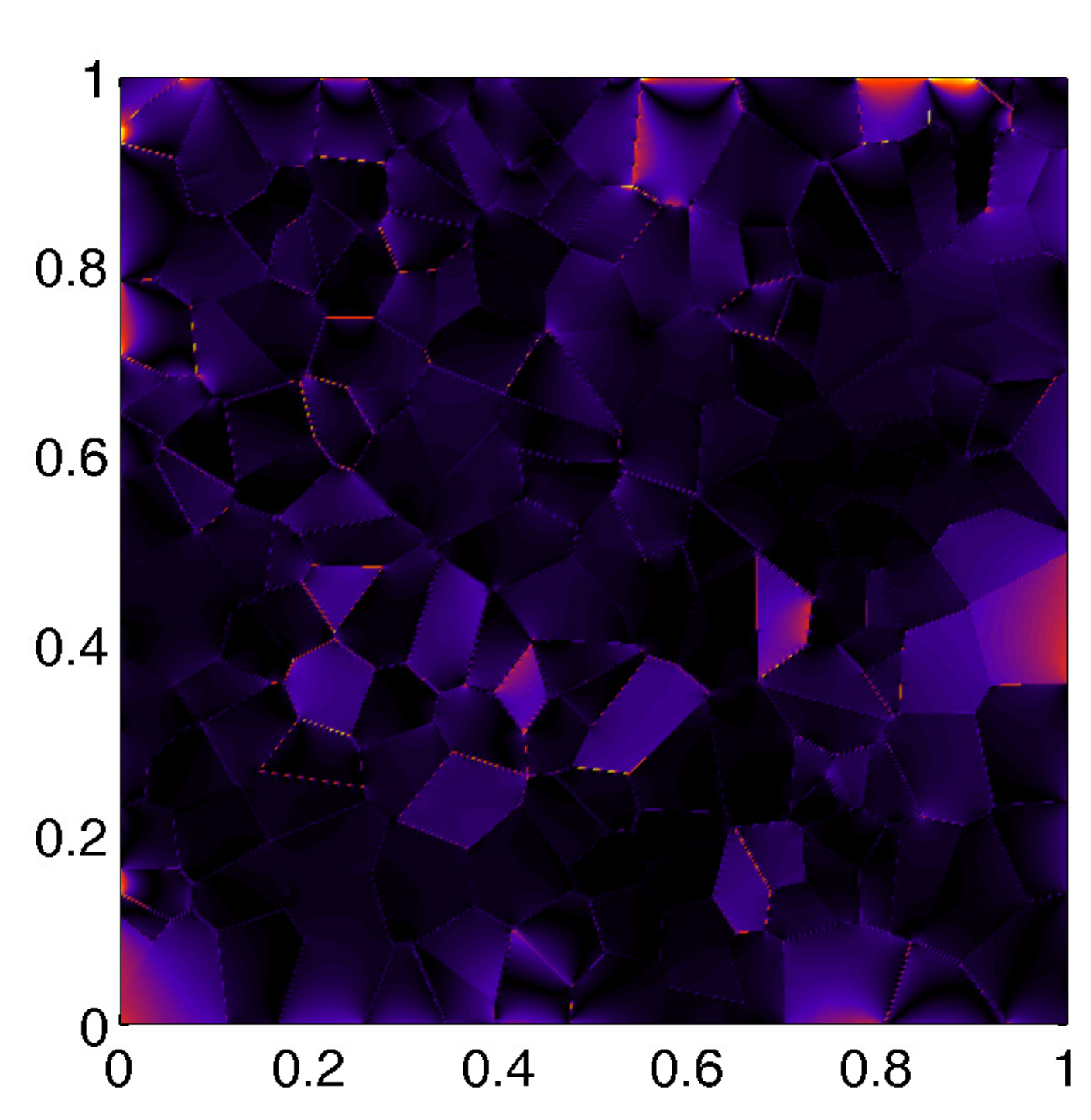}}\hspace{1mm}
\subfloat[$c=1$]{\includegraphics[height=0.21\textheight]{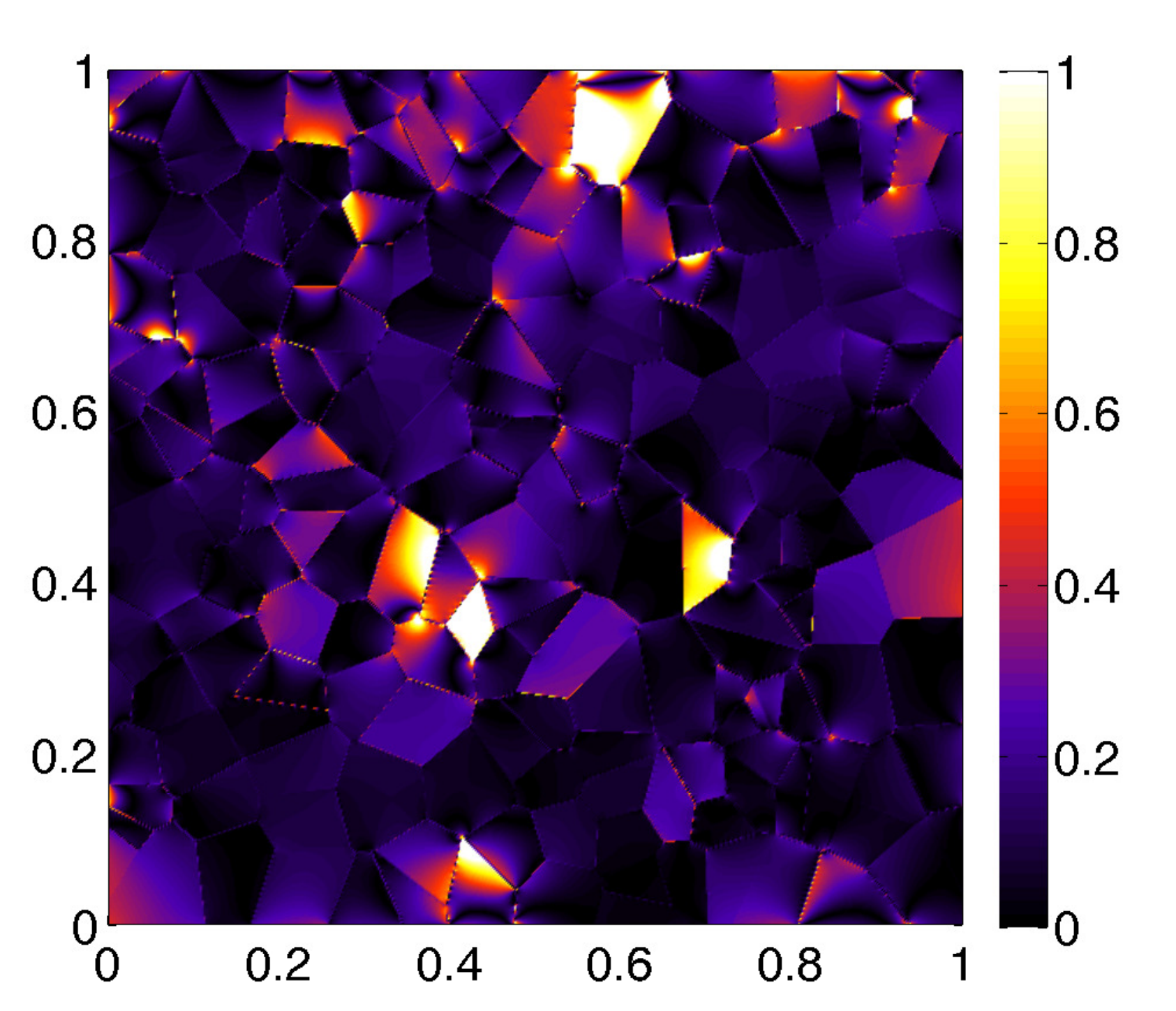}}
\caption{\emph{Configuration 2:} Normalized maps $\displaystyle\frac{1}{c}|\kappa_\text{ref}(\bx)-\kappa^{(1)}(\bx)|$ of error on the reconstruction depending on the contrast value $c$.}
\label{fig:cont:k}
\end{figure}

\begin{figure}[thb]	
\centering
\subfloat[$c=10^{-1}$]{\includegraphics[height=0.21\textheight]{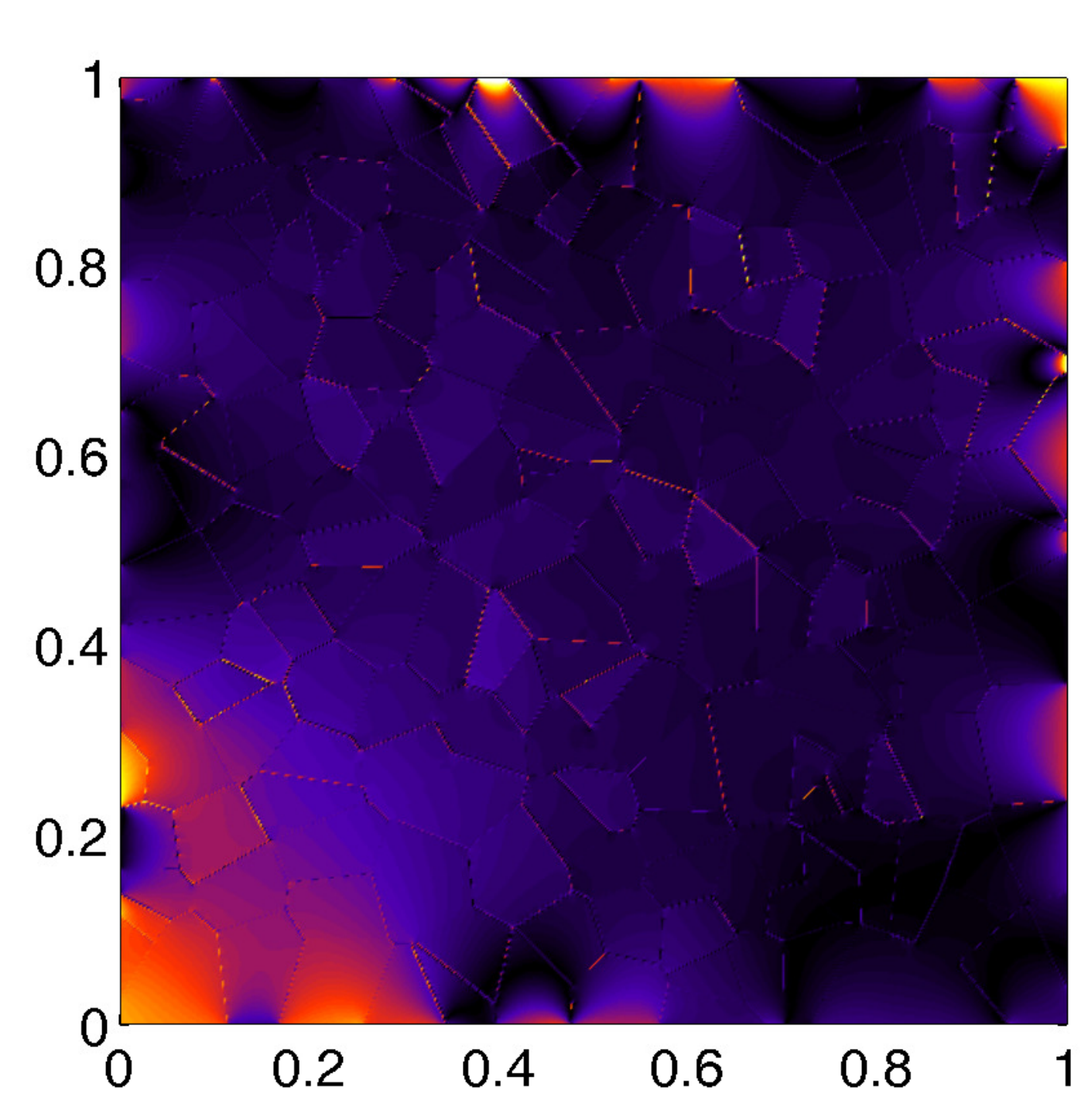}}\hspace{1mm}
\subfloat[$c=5\cdot10^{-1}$]{\includegraphics[height=0.21\textheight]{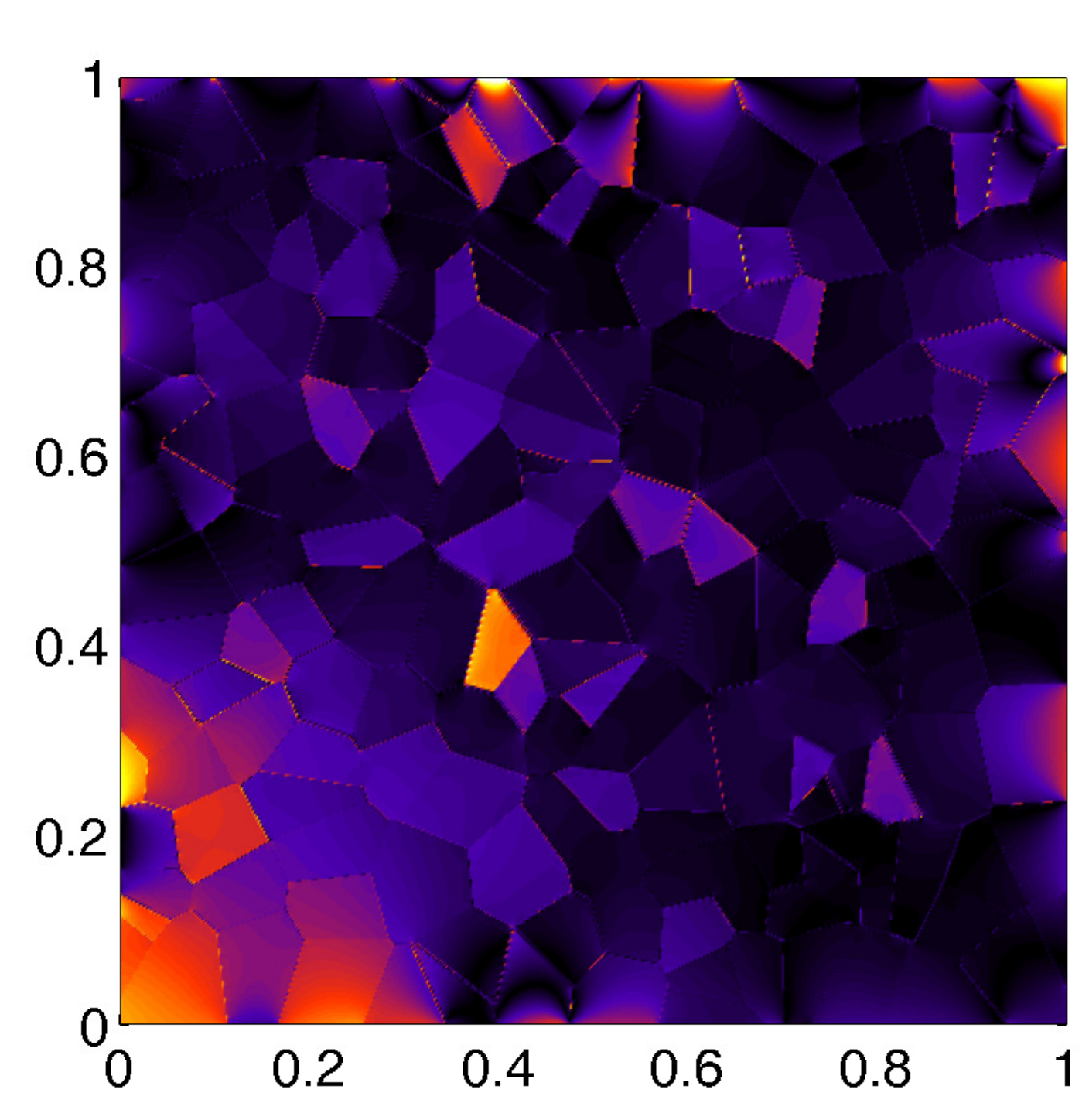}}\hspace{1mm}
\subfloat[$c=1$]{\includegraphics[height=0.21\textheight]{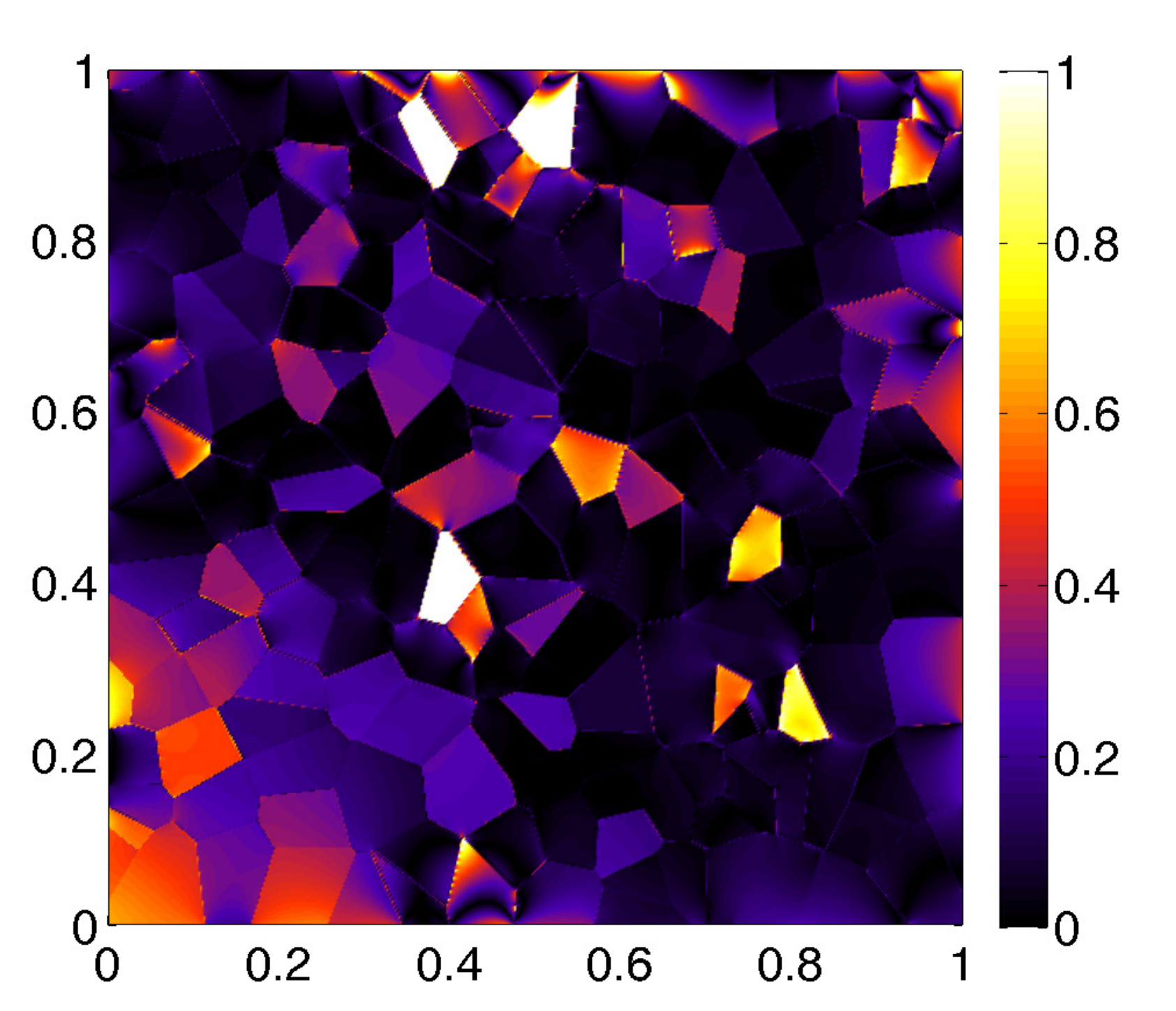}}
\caption{\emph{Configuration 2:} Normalized maps $\displaystyle\frac{1}{c}|\mu_\text{ref}(\bx)-\mu^{(2,3)}(\bx)|$ of error on the reconstruction depending on the contrast value $c$.}
\label{fig:cont:m}
\end{figure}

\section{Conclusions}

For materials with small contrasts, the closed form identities derived in this article allow to convert locally strain field maps into elasticity maps. Based on the integral formulation framework of the periodic elasticity problem which features the periodic Green's operator, then first-order expansions are employed to characterized the behaviors of strain field solutions and of the homogenized elasticity tensor. Then it is shown that the isotropic projection of the Green's tensor is associated with a local contribution to the integral equations considered. This key result then allows to derive local and explicit equations between the bulk or shear modulus and strain field solutions that are respectively associated with purely hydrostatic or deviatoric prescribed macroscopic strains. For a generic material configuration in 2D or 3D then only one strain map is sufficient to reconstruct the bulk modulus while the computation of the shear modulus requires the combination of either 2 or 5 strain field measurements respectively. Furthermore, if the material considered satisfies a property of macroscopic isotropy then it is shown that only one measurement is enough to reconstruct each one of the parameters. The analysis is performed in the periodic case by taking full advantage of a Fourier-based formulation. It is then revisited in the case of bounded media by using a differential operator-based approach. A set of analytical and numerical examples are presented to illustrate the obtained results. In particular the discussed numerical examples suggest that the proposed approach performs reasonably well for moderately small contrast values for which it is less sensitive to data pollution and thus relevant in practical applications. The proposed reconstruction formulae can be readily used, in first approximation, to identify elasticity parameters from full strain field data.

\section*{Acknowledgements}

Pierre Suquet and Herv\'e Moulinec are gratefully acknowledged for fruitful discussions. The Author is thankful to Marc Bonnet for his comments that helped to improve this article.

\appendix

\section{Tensor identities}\label{app:tensor}

The identity tensors on $\sot\mathbb{R}^d$ and $\sots \!\big(\!\sots\! \mathbb{R}^d\big)$ are respectively denoted as $\bm{I}$ with $I_{ij}=\delta_{ij}$ and $\bm{\mathcal{I}}$ with components $\mathcal{I}_{ijk\ell}=\frac12(\delta_{ik}\delta_{j\ell}+\delta_{i\ell}\delta_{jk})$. Moreover, in dimension $d$, one defines the tensors  
\[
\bJ=\frac1d \bm{I}\otimes\bm{I} \quad\text{and}\quad \bK=\bm{\mathcal{I}}-\bJ,
\]
which satisfy $\bJ\dc\bJ=\bJ$, $\bK\dc\bK=\bK$ and $\bJ\dc\bK=\bK\dc\bJ=\bm{0}$ while one introduces the notation
\begin{equation}
\nI=\bm{\mathcal{I}}\qc\bm{\mathcal{I}}=\frac{d(d+1)}{2}, \quad \nJ=\bJ\qc\bJ=1,\quad \nK=\bK\qc\bK=\frac{d(d+1)}{2}-1.
\label{id:qc}
\end{equation}
On noting that for any second-order tensor $\btau\in\sots\mathbb{R}^d$ one has
\[
\bJ\dc\btau=\frac1d(\bm{I}\dc\btau)\bm{I}=\frac1d\tr[\btau]\bm{I} \quad\text{and}\quad\bK\dc\btau=\btau-\frac1d\tr[\btau]\bm{I}=\dev[\btau],
\]
then the tensors $\bJ$ and $\bK$ are identified as the orthogonal projectors on the space of spherical and deviatoric second-order tensors respectively. Moreover they constitute a basis of symmetric and isotropic forth-order tensors \cite{Knowles}: any tensor $\bm{\mathcal{A}}\in \sots \!\big(\!\sots\! \mathbb{R}^d\big)$ isotropic can be written as $\bm{\mathcal{A}}=a\,\bJ+b\,\bK$ where the components $a$, $b$ read
\begin{equation}\label{comp:ela}
a=\frac{1}{\nJ}\bm{\mathcal{A}}\qc\bJ=\frac{1}{\nJ d}\,\mathcal{A}_{iijj}, \qquad b=\frac{1}{\nK}\bm{\mathcal{A}}\qc\bK=\frac{1}{\nK}\big(\mathcal{A}_{ijij}-\frac1d\mathcal{A}_{iijj}\big).
\end{equation}
\\

Considering a macroscopic strain $\bbeps$, then the associated strain field solution $\beps$ satisfies the orthogonal decomposition
\begin{equation}\label{orth:decomp}
\beps(\bx)=\eps\para(\bx)\, \frac{\bbeps}{\| \bbeps \|} + \beps\orth(\bx) \quad \text{with}\quad\eps\para(\bx)=\frac{\beps(\bx):\bbeps}{\| \bbeps \|} \quad \text{and} \quad \eps\orth=\|\beps\orth\|
\end{equation}
where $\|\btau\|^2=\btau:\btau$. Therefore $\eps\para(\bx),\, \eps\orth(\bx)\in\mathbb{R}$ denote the norms of the components of the strain field that are respectively parallel and orthogonal to $\bbeps$. Moreover one has
\begin{equation}\label{pyth:eps}
\|\beps(\bx)\|^2=\eps\para(\bx)^2+\eps\orth(\bx)^2.
\end{equation}

Finally, the so-called hydrostatic strain $\epsz$ and Von Mises equivalent strain $\epse$ are defined as the scalar quantities
\[
\epsz=\frac1d\tr[\beps] \quad\text{and}\quad\epse=\left(\frac{d-1}{d}\,\dev[\beps]\dc\dev[\beps]\right)^{\!\!1/2}.
\]
Note that the results obtained in this article are independent of the coefficients featured in the definitions of $\epsz$ and $\epse$. Using the above tensor projectors, these terms can be rewritten as
\begin{equation}\label{def:eps:z:eq}
\epsz^2=\frac1d \, \bJ\qc(\beps\otimes\beps) \quad \text{and}\quad \epse^2= \frac{d-1}{d}\, \bK\qc(\beps\otimes\beps),
\end{equation}
and they satisfy the identity 
\begin{equation}\label{pyth:JK}
\|\beps(\bx)\|^2=d\,\epsz(\bx)^2+\frac{d}{d-1}\epse(\bx)^2.
\end{equation}

\section{Fourier transform}\label{app:Fourier}

For $f \in L^2_\mathrm{per}(\mathcal{V},\mathbb{R})$, the Fourier transform and its inverse are defined by
\[
 \hat{f}(\bxi)=\mathscr{F}\big[f\big](\bxi)=\frac{1}{|\mathcal{V}|}\int_{\mathcal{V}}f(\bx)e^{-2\pi \mathrm{i}\,\bx\cdot\bxi}\td\bx, \qquad f(\bx)=\mathscr{F}^{-1}\big[\hat{f}\,\big](\bx)=\sum_{\bxi\in\mathcal{L}'} \hat{f}(\bxi)e^{2\pi \mathrm{i}\, \bx\cdot\bxi},
\]
where $\mathcal{L}'$ denotes the reciprocal lattice associated with the lattice $\mathcal{L}$ defined by $\mathcal{V}$.
Given $\mathcal{V}$-periodic functions $f$, $g$ the convolution theorem reads
\begin{equation}\label{conv:thm}\begin{aligned}
& [f\ast g](\bx)=\frac{1}{ |\mathcal{V}| }\int_{\mathcal{V}}f(\by)\, g(\bx-\by)\td\by=\mathscr{F}^{-1}\big[\hat{f}\,\hat{g}\big](\bx),\\
& [\hat{f}\ast \hat{g}](\bxi)= \sum_{\bet\in\mathcal{L}'} \hat{f}(\bet)\,\hat{g}(\bxi-\bet)=\mathscr{F}\big[fg\big](\bxi).
\end{aligned}\end{equation}
Moreover, the combination of space convolution and double inner product will be denoted by $\bm{f}\convd\bm{g}$, the operation $\convd$ being thus defined by replacing the featured product by the tensor inner product ``$:$'' in the above definition. Lastly, one defines for all $\bm{f}$, $\bm{g}$ in $L^2_\mathrm{per}(\mathcal{V},\mathbb{R}^d)$ an inner product as
\begin{equation}\label{def:norm}
\langle \bm{f}\cdot\bm{g} \rangle= \frac{1}{|\mathcal{V}|}\int_\mathcal{V}\bm{f}(\bx)\cdot\bm{g}(\bx)\,\text{d}\bx=\sum_{\bxi\in\mathcal{L}'} \hat{\bm{f}}(\bxi)\cdot{\hat{\bm{g}}}(\bxi)^*,
\end{equation}
with ${\hat{\bm{g}}}(\bxi)^*=\hat{\bm{g}}(-\bxi)$ being the complex conjugate of the real-valued function ${\bm{g}}$ and the second equality in \eqref{def:norm} being due to Plancherel's theorem. This inner product is extended to the spaces $L^2_\mathrm{per}(\mathcal{V}, \mathbb{R})$ and $L^2_\mathrm{per}(\mathcal{V},\sots \mathbb{R}^d)$ by replacing the simply contracted product in \eqref{def:norm} by a product or a doubly contracted product respectively.



\begin{thebibliography}{10}

\bibitem{Avril2008}
S.~Avril, M.~Bonnet, A.-S. Bretelle, M.~Gr\'{e}diac, F.~Hild, P.~Ienny,
  F.~Latourte, D.~Lemosse, S.~Pagano, E.~Pagnacco, and F.~Pierron.
\newblock Overview of identification methods of mechanical parameters based on
  full-field measurements.
\newblock {\em Experimental Mechanics}, 48(4):381--402, 2008.

\bibitem{BBIM}
G.~Bal, C.~Bellis, S.~Imperiale, and F.~Monard.
\newblock Reconstruction of constitutive parameters in isotropic linear
  elasticity from noisy full-field measurements.
\newblock {\em Inverse Problems}, 30:125004, 2014.

\bibitem{Bal:CPAM}
G.~Bal and G.~Uhlmann.
\newblock Reconstruction of coefficients in scalar second-order elliptic
  equations from knowledge of their solutions.
\newblock {\em Commun. Pure Appl. Math.}, 66:1629--1652, 2013.

\bibitem{Barbone:2008}
P.~E. Barbone, C.~E. Rivas, I.~Harari, U.~Albocher, A.~A. Oberai, and
  S.~Goenzen.
\newblock Adjoint-weighted variational formulation for the direct solution of
  plane stress inverse elasticity problems.
\newblock {\em J. Phys.: Conf. Ser.}, 135:012012, 2008.

\bibitem{Barbone2010}
P.~E. Barbone, C.~E. Rivas, I.~Harari, U.~Albocher, A.~A. Oberai, and Y.~Zhang.
\newblock Adjoint-weighted variational formulation for the direct solution of
  inverse problems of general linear elasticity with full interior data.
\newblock {\em Int. J. Numer. Meth. Engng}, 81:1713--1736, 2010.

\bibitem{BM16}
C.~Bellis and H.~Moulinec.
\newblock A full-field image conversion method for the inverse conductivity
  problem with internal measurements.
\newblock {\em Proc. Roy. Soc. of London A}, 472(2187), 2016.

\bibitem{Bobeth}
M.~Bobeth and G~Diener.
\newblock Field fluctuations in multicomponent mixtures.
\newblock {\em J. Mech. Phys. Solids}, 34:1--17, 1986.

\bibitem{Bonnet:book}
M.~Bonnet, editor.
\newblock {\em Boundary Integral Equation Methods for Solids and Fluids}.
\newblock Wiley, 1999.

\bibitem{BonnetIP}
M.~Bonnet and A.~Constantinescu.
\newblock Inverse problems in elasticity.
\newblock {\em Inverse Problems}, 21:R1--R50, 2005.

\bibitem{book:3D:imaging}
J.-Y. Buffi\`ere and E.~Maire, editors.
\newblock {\em {Imagerie 3D en m\'ecanique des mat\'eriaux}}.
\newblock Herm\`es, 2014.

\bibitem{Eshelby}
J.~D. Eshelby.
\newblock The determination of the elastic field of an ellipsoidal inclusion,
  and related problems.
\newblock {\em Proc. Roy. Soc. Lond. A}, 241:376--396, 1957.

\bibitem{Grediac:Hild:FF}
M.~Gr\'ediac and F.~Hild, editors.
\newblock {\em Full-Field Measurements and Identification in Solid Mechanics}.
\newblock John Wiley \& Sons, Inc., 2013.

\bibitem{Hashin}
Z.~Hashin and S.~Shtrikman.
\newblock A variational approach to the theory of the elastic behaviour of
  multiphase materials.
\newblock {\em Journal of the Mechanics and Physics of Solids}, 11(2):127 --
  140, 1963.

\bibitem{Ikehata}
M.~Ikehata.
\newblock Inversion formulas for the linearized problem for an inverse boundary
  value problem in elastic prospection.
\newblock {\em SIAM J. Appl. Math.}, 50:1635--1644, 1990.

\bibitem{Knowles}
J.~K. Knowles.
\newblock On the representation of the elasticity tensor for isotropic media.
\newblock {\em J. Elasticity}, 39:175--180, 1995.

\bibitem{Kreher}
W.~Kreher.
\newblock Residual stresses and stored elastic energy of composites and
  polycrystals.
\newblock {\em J. Mech. Phys. Solids}, 38:115--128, 1990.

\bibitem{Lebensohn}
R.~A. Lebensohn, A.~D. Rollett, and P.~Suquet.
\newblock {Fast Fourier Transform-based modelling for the determination of
  micromechanical fields in polycrystals}.
\newblock {\em Journal of Materials}, 63:13--18, 2011.

\bibitem{Michel}
J.~C. Michel, H.~Moulinec, and P.~Suquet.
\newblock A computational scheme for linear and non-linear composites with
  arbitrary phase contrast.
\newblock {\em International Journal for Numerical Methods in Engineering},
  52:139--160, 2001.

\bibitem{Mikhlin}
S.~G. Mikhlin.
\newblock {\em {Multidimensional singular integrals and singular integral
  equations}}.
\newblock Pergamon Press, 1965.

\bibitem{Milton}
G.~W. Milton.
\newblock {\em The Theory of Composites}.
\newblock Cambridge University Press, 2002.

\bibitem{Moulinec94}
H.~Moulinec and P.~Suquet.
\newblock A fast numerical method for computing the linear and nonlinear
  properties of composites.
\newblock {\em C. R. Acad. Sc. Paris, II}, 318:1417--1423, 1994.

\bibitem{Moulinec:98}
H.~Moulinec and P.~Suquet.
\newblock A numerical method for computing the overall response of nonlinear
  composites with complex microstructure.
\newblock {\em Computer Methods in Applied Mechanics and Engineering}, 157:69
  -- 94, 1998.

\bibitem{Mura}
T.~Mura.
\newblock {\em {Micromechanics of Defects in Solids}}.
\newblock Springer, 1987.

\bibitem{Parker}
K.~J. Parker, M.~M. Doyley, and D.~J. Rubens.
\newblock Imaging the elastic properties of tissue: the 20 year perspective.
\newblock {\em Phys. Med. Biol.}, 56:R1---R29, 2011.

\bibitem{PCS:NLC}
P.~Ponte Casta\~{n}eda and P.~Suquet.
\newblock Nonlinear composites.
\newblock {\em Advances in Applied Mechanics}, 34:171--302, 1998.

\bibitem{Torquato1997}
S.~Torquato.
\newblock {Effective stiffness tensor of composite media -- I. Exact series
  expansions}.
\newblock {\em Journal of the Mechanics and Physics of Solids}, 45(9):1421 --
  1448, 1997.

\bibitem{Torquato}
S.~Torquato.
\newblock {\em Random Heterogeneous Materials: Microstructure and Macroscopic
  Properties}.
\newblock Springer, 2002.

\bibitem{Walpole}
L.~J. Walpole.
\newblock Elastic behavior of composite materials: theoretical foundations.
\newblock {\em Adv Appl Mech}, 21:169--242, 1981.

\bibitem{Willis1977}
J.R. Willis.
\newblock Bounds and self-consistent estimates for the overall properties of
  anisotropic composites.
\newblock {\em Journal of the Mechanics and Physics of Solids}, 25:185 -- 202,
  1977.

\bibitem{Willis1981}
J.R. Willis.
\newblock Variational and related methods for the overall properties of
  composites.
\newblock {\em Advances in Applied Mechanics}, 21:1 -- 78, 1981.

\end{thebibliography}
\end{document}